\documentclass[11pt,reqno]{amsart}
\usepackage{amsmath}
\usepackage{amsthm}
\usepackage{graphicx}
\usepackage{amsfonts}
\usepackage{amssymb}
\usepackage{hyperref}
\usepackage{bm}
\usepackage{color}

\usepackage[margin=1.1in]{geometry}
\usepackage{enumerate}
\usepackage{mathrsfs}

\usepackage[numbers,square]{natbib}
\usepackage{mathtools}

\numberwithin{equation}{section}
\newtheorem{theorem}{Theorem}[section]
\newtheorem{lemma}[theorem]{Lemma}
\newtheorem{proposition}[theorem]{Proposition}
\newtheorem{corollary}[theorem]{Corollary}
\newtheorem*{assumption*}{\assumptionnumber}
\providecommand{\assumptionnumber}{}
\makeatletter
\newenvironment{assumption}[1]
 {%
  \renewcommand{\assumptionnumber}{Assumption #1}%
  \begin{assumption*}%
  \protected@edef\@currentlabel{\textbf{\textup {#1}}}%
 }
 {%
  \end{assumption*}
 }

\theoremstyle{definition}
\newtheorem{example}[theorem]{Example}
\newtheorem{definition}[theorem]{Definition}
\newtheorem{remark}[theorem]{Remark}

\def\E{{\mathbb E}}
\def\R{{\mathbb R}}
\def\N{{\mathbb N}}
\def\P{{\mathbb P}}

\def\L{{\mathcal L}}

\newcommand{\green}[1]{\textcolor{green}}
\makeindex
\def\Law{\mathcal{L}}
\def\tr{\mathrm{Tr}}

\newcommand{\argmin}{\operatornamewithlimits{arg\,min}}

\newcommand{\wgrad}{\nabla_{\W}}

\newcommand{\W}{\mathbb W}

\newcommand{\ent}{ \mathbb H}
\newcommand{\fin}{\mathbb I}

\begin{document}

\title[Sharp chaos for mean field Langevin, control, and games]{Sharp propagation of chaos for mean field Langevin dynamics, control, and games}

\begin{abstract}
We establish the sharp rate of propagation of chaos for McKean-Vlasov equations with coefficients that are non-linear in the measure argument, i.e., not necessarily given by pairwise interactions. Results are given both on bounded time horizon and uniform in time. As applications, we deduce the sharp rate of propagation of chaos for the convergence problem in mean field games and control, and for mean field Langevin dynamics, the latter being uniform in time in the strongly displacement convex regime. Our arguments combine the BBGKY hierarchy with techniques from the literature on weak propagation of chaos.
\end{abstract}

\author{Manuel Arnese and Daniel Lacker}

\maketitle

\section{Introduction}
We study a system of particles 
\begin{equation}
\label{eq: particle SDE}
    \begin{split}
        &dY_t^i= V(m_t^n,Y_t^i)\,dt+\sqrt2 \sigma \, dB_t^i, \qquad i=1,\dots,n, \\
        &Y_0^i \sim \text{ iid }  \mu_0 , \qquad m_t^n= \frac 1n\sum_{i=1}^n \delta_{Y_t^i},
    \end{split}
\end{equation}
that interact through the empirical measure $m_t^n$. Each particle $Y_t^i$ takes values in $\R^d$, and $B_t^1,\dots,B_t^n$ are independent Wiener processes. It is moreover important for our results that $\sigma$ is non-zero.  Systems of this type have numerous applications in physics, economics, and machine learning, to name but a few, and we refer to \cite{ChaintronChaos1,ChaintronChaos2} for a thorough recent survey.
The large-$n$ limit of the system is typically expected to be described by the following McKean-Vlasov equation: 
\begin{equation}    \label{eq: main SDE}
dX_t=  V(\mu_t,X_t)\,dt+\sqrt 2 \sigma \,dB_t,  \ \ \Law(X_0)= \mu_0, \ \ \mu_t=\mathrm{Law}(X_t).
\end{equation}
There is by now a large body of work studying the convergence of \eqref{eq: particle SDE} to \eqref{eq: main SDE}, for a wide variety of specializations of $ V$. There are two equivalent ways to formulate the large-$n$ limit:
\begin{enumerate}
\item Mean-field limit: As $n\to\infty$, $m^n_t$ converges to $\mu_t$ (weakly in probability).
\item Propagation of chaos: For each fixed $k$, as $n\to\infty$, the joint law of $(Y_t^1,\dots,Y_t^k)$ converges to the product measure $\mu_t^{\otimes k}$.
\end{enumerate}
Although it is classical that these two formulations are equivalent \emph{qualitatively}, they turn out to be quite different \emph{quantitatively}, and it remains unclear how to transfer quantitative global bounds into local bounds in a sharp way, or vice versa.
Moreover, these two formulations are somewhat different in spirit: The first (1) is a \emph{global} property that involves  the entire configuration $(Y^1,\dots,Y^n)$, while the second (2) is more \emph{local} as it involves low-dimensional marginals.

The majority of the literature focuses on systems with \emph{pairwise interactions}, where $ V$ takes the form
\begin{equation}
 V(\mu,x)=\int \phi(x,y)\,d\mu(y), \qquad \phi : \R^d \times \R^d \to \R^d \label{intro:pairwise}
\end{equation}
Pairwise interactions are especially prominent in applications in physics, where the relevant interaction forces in many-body systems (e.g., electrostatic or gravitational) naturally act pairwise.
Major challenges arise when the interaction function $\phi$ is singular, and for many such models it remains an active topic of research to justify (1) or (2) even just qualitatively \cite{JabinWangReview}.
 
In this paper we focus on non-pairwise interactions, where $ V$ is a fairly general nonlinear functional of its measure argument, assumed smooth in a sense we make precise later. This setting arises naturally in the contexts of mean field Langevin dynamics, mean field games, and mean field control, all of which will be discussed in more detail below.
Our main goal is to prove sharp estimates of propagation of chaos.

There are two main quantitative techniques used in prior literature for general interactions, which are both global in nature.
First is the synchronous coupling approach, popularized by Sznitman \cite{snitzmann}. If $ V(\mu,x)$ is jointly Lipschitz, with the 2-Wasserstein distance $\W_2$ used for $\mu$ (though any $p\neq 2$ works analogously), then one can prove a global bound of
\[
\W_2^2(\pi^n_t , \mu^{\otimes n}_t) = O(1),
\]
where $\pi^n_t$ is the joint law of $(Y^1_t,\ldots,Y^n_t)$. By a subadditivity inequality, this implies
\begin{equation}
\W_2^2(\pi^k_t , \mu^{\otimes k}_t) = O(k/n), \quad k \le n, \label{intro:W2-subadditive}
\end{equation}
where $\pi^k_t$ is the joint law of $(Y^1_t,\ldots,Y^k_t)$.
The second and more recent approach, known as \emph{weak propagation of chaos} \cite{Chassagneux2022-jt,UiTWeakChaos}, is based on the analysis of the Kolmogorov backward equation on the space of measures associated with the dynamics of $(\mu_t)_{t \ge 0}$. For appropriately smooth functions $f:\mathcal P(\R^d)\to \R$ this yields the sharp estimate
\begin{equation}
|\E[f(m_t^n)-f(\mu_t)]|= O(1/n). \label{intro:weakPoC}
\end{equation}
In fact, these techniques can yield precise expansions of $\E f(m_t^n)$ around $f(\mu_t)$ in powers of $1/n$, with coefficients determined by derivatives of $f$.

The second author recently introduced in \cite{HierarchiesPaper} a purely \emph{local} method for studying propagation of chaos, based on the BBGKY hierarchy.
For pairwise interactions \eqref{intro:pairwise}, with $\phi$ being Lipschitz or bounded, it was shown that
\begin{equation}
\ent(\pi_t^k\,\|\,\mu_t^{\otimes k})= O(k^2/n^2), \label{intro:localbound}
\end{equation}
where $\ent$ is relative entropy.  When $\phi$ is Lipschitz, applying a Talagrand inequality leads to a notable improvement of \eqref{intro:W2-subadditive} to $O(k^2/n^2)$. On the other hand, \eqref{intro:weakPoC} and \eqref{intro:localbound} are hard to compare directly, except that the latter easily implies the former when $f$ is a regular enough U-statistic.

In this paper we will prove \eqref{intro:localbound} for a broad class of general (non-pairwise) interactions $ V$, both on bounded time horizons and uniform in time, the latter when $V$ is displacement monotone (see Assumption \ref{Assumption: UiT}). A first step toward this appeared in \cite[Section 2D]{HierarchiesPaper}, but with a slower rate and with unnatural and difficult-to-check smoothness assumptions on $ V$.

Our proof builds on the BBGKY approach of \cite{HierarchiesPaper}. In the pairwise interaction case, the time evolution of the marginal laws $(\pi_t^k)_{k=1}^n$ follows the BBGKY hierarchy; in particular, the evolution of the $k$-th marginal $\pi_t^k$ depends only on the conditional law of $Y_t^{k+1}$ given $(Y_t^1,\dots,Y_t^k)$. The approach of \cite{HierarchiesPaper} was to analyze the dynamics of the relative entropy at each level of the hierarchy. This led to a system of differential inequalities,
\begin{equation}
    \label{eq: Hierarchy pairwise case}
    \frac{d}{dt}\ent(\pi^k_t\,\|\,\mu^{\otimes k}_t)\lesssim \frac{k^3}{n^2}+k\Big(\ent(\pi^{k+1}_t\,\|\,\mu^{\otimes (k+1)}_t)-\ent(\pi^k_t\,\|\,\mu^{\otimes k}_t)\Big), \quad k < n,
\end{equation}
which can then be used to derive the desired estimate \eqref{intro:localbound}.
Subsequent work extended this method and result to be uniform in time \cite{SharpChaosLacker2023,TimeUniformLSI}, to obtain higher-order (cluster) expansions of $\pi^k_t$ around $\mu^{\otimes k}_t$ \cite{Hess-Childs2025-ux}, to non-exchangeable interactions \cite{lacker2024quantitativepropagationchaosnonexchangeable}, to hold in Fisher information \cite{grass2025propagation}, to interactions through the diffusion coefficient \cite{sharpChaosDiffusion}, and even to certain singular interactions \cite{wang2024sharplocalpropagationchaos}, but always in the setting of pairwise interactions.

To handle non-pairwise interactions, we adapt the BBGKY approach by performing a Taylor expansion of $ V(m^n_t,\cdot)$ around $ V(\mu_t,\cdot)$ at an appropriate moment. The first-order term produced takes the form of a pairwise interaction which can be analyzed using prior methods. This ultimately leads to a system of differential inequalities
\begin{equation}
\label{eq: differential inequality}
    \frac{d}{dt}\ent(\pi^k_t\,\|\,\mu^{\otimes k}_t)\lesssim \frac{k^3}{n^2}+k\Big(\ent(\pi^{k+1}_t\,\|\,\mu^{\otimes (k+1)}_t)-\ent(\pi^k_t\,\|\,\mu^{\otimes k}_t)\Big)+\E [R_t(m^n_t)],
\end{equation}
for some non-linear ``remainder'' term $R$. The problem then reduces to understanding the rate at which this  remainder vanishes. To do this, we use ideas from the literature on weak propagation of chaos mentioned above, in particular using tools from \cite{Chassagneux2022-jt} and \cite{Buckdahn2017-op}. However, the off-the-shelf estimate in \eqref{intro:weakPoC} yields only $1/n$ instead of our desired rate of $1/n^2$, and we must instead carefully exploit the fact that the remainder $R_t(\cdot)$ vanishes at first order at $\mu_t$.

It is worth stressing that some smoothness of $ V(\mu,x)$ is necessary. Mere Wasserstein-Lipschitz continuity suffices for the non-sharp bound \eqref{intro:localbound} but  not  for the sharp bound \eqref{intro:localbound}. This fact, explained in Example \ref{example}, lies in contrast with the pairwise interaction case \eqref{intro:pairwise}, where Lipschitz $\phi$ was good enough for the sharp bound \eqref{intro:localbound}.

For simplicity, we focus on the case of iid initializations, but see Remark \ref{rk: beyond iid} for some discussion of how to go beyond this case. 
It should be possible to obtain \eqref{intro:localbound} without assuming iid initial conditions, but rather just assuming  $\ent(\pi^k_0\,\|\,\mu^{\otimes k}_0)=O(k^2/n^2)$.
Indeed, this was shown in prior work in the case of pairwise interactions, starting in \cite{SharpChaosLacker2023}. Many kinds of mean field Gibbs measures $\pi^n_0$ are known to satisfy this condition for suitable $\mu_0$, as was shown in \cite{lacker2022quantitative,ren2024sizechaosgibbsmeasures}.

\section{Results} \label{se:mainresults}

\subsection{Analysis on the space of measures}

For a random variable $X$ we write $\Law(X)$ for its law.
For a separable metric space $(E,d)$ we write $\mathcal{P}(E)$ for the set of Borel probability measures on $E$. For $p \ge 1$ we let $\mathcal{P}_p(E)$ denote the subset of $m \in \mathcal{P}(E)$ for which $\int d^p(x_0,\cdot)\,dm < \infty$ for some $x_0 \in E$. Define the Wasserstein distance $\W_p$ as usual by
\[
\W_p^p(\mu,\nu) = \inf_\pi \int_{E \times E} d^p(x,y)\,\pi(dx,dy),
\]
where the infimum is over couplings of $\mu$ and $\nu$. The relative entropy is defined by 
\[
\ent(\nu\,\|\,\mu) = \begin{cases} \int_E \frac{d\nu}{d\mu}\log \frac{d\nu}{d\mu}\, d\mu &\text{if } \nu \ll \mu \\ \infty &\text{otherwise}. \end{cases}
\]

We will make heavy use of  differentiability concepts on the space of probability measures; for an introduction, see \cite{Ambrosio2008-dt} (for an optimal transport perspective) or \cite[Chapter 5]{Carmonabook1} (for a more probabilistic approach). Consider a function $f:\mathcal P_2(\R^d)\to \R$. A \emph{flat derivative} of $f$ is a function $\delta_m f:\mathcal P_2(\R^d)\times \R^d\to \R $ that satisfies, for every $\nu,\eta \in \mathcal P_2(\R^d)$
\begin{equation*}
\int \delta_m f(\nu,y)\,d(\nu-\eta)(y)=\lim_{h \to 0}\frac{f((1-h)\nu+h\eta)-f(\nu) }{h}, \quad \delta_m f(\nu,\cdot) \in L^2(\nu) \cap L^2(\eta).
\end{equation*}
If it exists, the function $x\mapsto \delta_m f(\nu,x)$ is unique up to an additive factor that is allowed to depend on $\nu$; this is because the constant gets erased by integrating against a zero mass measure. For concreteness, for each $\nu$ we choose the unique normalization that guarantees that $\delta_m f(\nu,0)=0$.

Notice that flat derivatives acquire an extra space argument compared to the original function $f$. We can iterate the operation of taking a flat derivative by keeping the extra space argument fixed, obtaining the $k$-flat derivative $\delta_m^k f: \mathcal P_2(\R^d)\times (\R^d)^k\to \R$.
The Wasserstein (or Lions) derivative $\wgrad f : \mathcal P_2(\R^d) \times \R^d \to \R^d$ is defined as 
\[\wgrad f(\nu,y)=\nabla_y \delta_m f(\nu,y).\]
To obtain the $k$-Wasserstein derivative, we apply this operation $k$ times, resulting in a function $\wgrad^k f : \mathcal P_2(\R^d) \times (\R^d)^k \to (\R^d)^{\otimes k}$,
\[\wgrad^k f(\nu,\boldsymbol{y})=\nabla_{y_k}\delta_m\dots \nabla_{y_1} \delta_m f(\nu,y_1,\dots,y_k).\]
Under standard regularity assumptions (e.g.,  \cite[Theorem 2.6]{Chassagneux2022-jt} or \cite[Lemma 5.4]{Buckdahn2017-op}), space and flat derivatives commute, and so we might also apply $\delta_m^k$ and then $k$ gradient operations, one in each variable, to obtain the same function $\wgrad^k f$.

\begin{definition} \label{def:multiindex}
Pick two non negative integers $\mathsf{w},b\in \N_0$ and a vector of $\mathsf{w}$ non negative integers $\boldsymbol{c}\in \N_0^{\mathsf w}$. We say that $(\mathsf{w},\boldsymbol{i})=(\mathsf{w},b,\boldsymbol{c})$ is a multi-index of size $\mathsf{w}+|\boldsymbol{i}|:=\mathsf{w}+b+\sum_{j=1}^\mathsf{w} c_j$. For a function $f=f(\nu,x)$ on $\mathcal P_2(\R^d)\times \R^d$ taking values in a Euclidean space, we denote
    \begin{equation*} D^{\mathsf{w},\boldsymbol{i}}f(\nu,x,\boldsymbol{y})=\nabla_{y_1}^{c_1}\dots \nabla_{y_{\mathsf{w}}}^{c_{\mathsf{w}}}\wgrad^{\mathsf{w}}\nabla_x^b f(\nu,x,y_1,\dots y_\mathsf{w}).
    \end{equation*}
\end{definition}
Let us explain this piece of notation in words: the first component of a multi index is a number $\mathsf{w} \in \N_0$ that denotes how many Wasserstein derivatives in $\nu$ we are taking, while the second counts the number of derivatives in the spatial argument $x$. The third element is a vector $\boldsymbol{c}\in \N^\mathsf{w}_0$ that counts the number of derivatives taken in each of the new space arguments that arise due to the $\mathsf w$ Wasserstein gradients. Notice that for a fixed $\nu$, $D^{\mathsf{w},\boldsymbol i}f(\nu,\cdot):\R^{d(\mathsf{w}+1)}\to (\R^{d})^{\mathsf{w}+|\boldsymbol i|}$, and we equip $(\R^{d})^{\mathsf{w}+|\boldsymbol i|}$ with the Euclidean norm.

\begin{definition}
We denote by $\mathcal C^k_{\textup{bd}}(\mathcal P_2(\R^d)\times \R^d)$ the set of functions $f:\mathcal P_2(\R^d)\times \R^d\to \R^d$ such that for every multi-index $(\mathsf w,\boldsymbol{i})=(\mathsf w, b,\boldsymbol c)$, with $0<\mathsf w+|\boldsymbol{i}|\leq k$, the derivative $D^{\boldsymbol{\mathsf w,i}}f$ exists and is bounded and Lipschitz with respect to the $\W_2$ and Euclidean metrics. We similarly write $C^k_{\textup{bd}}(\mathcal P_2(\R^d) )$ for the subset of $C^k_{\textup{bd}}(\mathcal P_2(\R^d)\times \R^d)$ consisting of those functions which depend only on the measure argument, and the notation $D^{\mathsf w,\boldsymbol{i}}f$ for such a function will be defined only for a multi-index of the form $(\mathsf{w},\boldsymbol{i})=(\mathsf{w},0,c)$, i.e., with $b=0$.
\end{definition}
We stress that the strict inequality $0 < \mathsf w+|\boldsymbol{i}|$ means that $\mathcal C^k_{bd}(\mathcal P_2(\R^d)\times \R^d)$ contains unbounded functions (of linear growth). 
Examples include functions of the form $f(\nu,x)=g(x,\int h(x,y)\,d\nu(y))$, where $g:\R^d\times \R \to \R$ and $h:\R^d \times \R^d \to \R$ each have $k$ bounded space derivatives.

\subsection{Assumptions and main results}

We work with two sets of assumptions. The first is for bounded time horizon, and the second is for uniform-in-time results.

\begin{assumption}{($A$)} { \ }
\begin{enumerate}
\item The function $V$ belongs to $\mathcal C_{\textup{bd}}^6(\mathcal P_2(\R^d)\times \R^d)$. 
\item The initial condition $\mu_0$ satisfies a T$_1$ transport inequality: there exists a constant $C_{\textup{T}_1}(0)> 0$ such that for every $\nu \in \mathcal P(\R^d)$,
\[\W_1^2(\mu_0,\nu)\leq 2C_{\textup{T}_1}(0) \,\ent (\nu\,\|\,\mu_0). \]
\item The noise is non-degenerate: $\sigma>0$.
\end{enumerate}
\label{assumption on Phi}
\end{assumption}

Let us note that Assumption \ref{assumption on Phi}(2) is equivalent to $\mu_0$ being $C_{\textup{T}_1}(0)$-subgaussian, in the sense that
\begin{equation}
\log \int_{\R^d} e^{c f(x)}d\mu(x) \le \lambda \int_{\R^d} f\,d\mu + C_{\textup{T}_1}(0)\frac{c^2}{2}, \label{ineq:mu_0subgaussian}
\end{equation}
for all $c \in \R$ and all 1-Lipschitz functions $f$. This equivalence is due to  Bobkov-G\"otze \cite[Theorem 4.8]{VanHandelHDP}.

Under Assumption \ref{assumption on Phi}, the stochastic differential equations \eqref{eq: particle SDE} and \eqref{eq: main SDE} both admit pathwise unique strong solutions. Let us write $\pi^n \in \mathcal P(C(\R_+;(\R^{d})^{n}))$ for the law of the process $(Y^1,\ldots,Y^n)$ and $\pi^k$ for the marginal of $(Y^1,\ldots,Y^k)$, for $1 \le k \le n$. 
In what follows, for $t \ge 0$ and $\nu \in \mathcal P( C(\R_+;\R^m))$, we write $\nu[t]$ for its restriction to the time interval $[0,t]$, i.e., the pushforward by the restriction map $C(\R_+;\R^m) \ni x\mapsto x|_{[0,t]} \in C([0,t];\R^m)$.
We also write $\nu_t$ for the time-$t$ marginal, i.e., the pushforward by $C(\R_+;\R^m) \ni x\mapsto x_t \in \R^m$.

Our first main result is the following propagation of chaos bound on a finite time horizon.

\begin{theorem}
\label{th: short time}
    Under assumption \ref{assumption on Phi}, for every $k\leq n$ and every fixed $t>0$, we have
    \begin{equation}
        \ent(\pi^k[t]\,\|\,\mu^{\otimes k}[t]) = O(k^2/n^2).
    \end{equation}
 The hidden constant depends on $t$, $C_{\textup{T}_1}(0)$, $\mu_0$, and the derivative bounds on $ V$.
\end{theorem}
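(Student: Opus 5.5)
We follow the BBGKY approach of \cite{HierarchiesPaper}, working at the level of path laws. By Girsanov's theorem and the chain rule for relative entropy, the marginal $\pi^k$ solves an SDE with drift $\widehat V^i_t(Y^{1:k}) := \E[V(m^n_t,Y^i_t)\,|\,Y^1_{[0,t]},\dots,Y^k_{[0,t]}]$ for $i\le k$. Writing $H^k_t := \ent(\pi^k[t]\,\|\,\mu^{\otimes k}[t])$, this gives
\[
H^k_t = \frac{1}{4\sigma^2}\sum_{i=1}^k \int_0^t \E\big|\widehat V^i_s - V(\mu_s,Y^i_s)\big|^2\,ds .
\]
We then Taylor expand $V(m^n_s,Y^i_s)$ around $\mu_s$ to second order in the flat sense:
\[
V(m^n_s,x) = V(\mu_s,x) + \frac1n\sum_{j}\big(\delta_m V(\mu_s,x,Y^j_s) - \langle \mu_s,\delta_m V(\mu_s,x,\cdot)\rangle\big) + \mathcal R_s(m^n_s,x).
\]
The linear term is a pairwise interaction with kernel $\phi_s(x,y) = \delta_m V(\mu_s,x,y) - \int \delta_m V(\mu_s,x,\cdot)\,d\mu_s$. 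This kernel is Lipschitz and centered under $\mu_s$ in $y$, because $V\in\mathcal C^6_{\rm bd}$.

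For the pairwise part we reuse the analysis of \cite{HierarchiesPaper}. We split the sum into $j\le k$ (including $j=i$), which costs $O(k^2/n^2)$ per particle, and $j>k$. By exchangeability the $j>k$ part equals $\frac{n-k}{n}\E[\phi_s(Y^i_s,Y^{k+1}_s)\,|\,Y^{1:k}]$. We compare the conditional law of $Y^{k+1}$ with $\mu_s$ through the T$_1$ (subgaussian) inequality, which we propagate in time from Assumption \ref{assumption on Phi}(2) using bounded Lipschitz drift plus Gaussian noise. This yields a bound by $k\,(H^{k+1}_s - H^k_s)$. The contribution of $\mathcal R$ is bounded by Jensen by $2k\,\E|\mathcal R_s(m^n_s,Y^1_s)|^2$. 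Altogether we get the differential inequality
\[
\frac{d}{dt}H^k_t \lesssim \frac{k^3}{n^2} + k\,(H^{k+1}_t - H^k_t) + k\,\E\big|\mathcal R_t(m^n_t,Y^1_t)\big|^2 ,
\]
with $H^k_0=0$ since the initial conditions are iid. Once the remainder term is shown to be $O(1/n^2)$ uniformly on $[0,t]$, the iteration lemma of \cite{HierarchiesPaper} (integrating the hierarchy and using $H^n_t\lesssim n$) gives $H^k_t = O(k^2/n^2)$.

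The main obstacle is the remainder estimate $\E|\mathcal R_s(m^n_s,Y^1_s)|^2 = O(1/n^2)$. Naively, $\mathcal R$ is quadratic in $m^n_s-\mu_s$ in a weak sense, and $\E|\cdot|^2$ of a quadratic quantity is of order $1/n^2$ only for centered U-statistics. The function $G_s(m) := \int |\mathcal R_s(m,x)|^2\,dm(x)$ vanishes to second order at $\mu_s$, i.e.\ $G_s(\mu_s)=0$, $\delta_m G_s(\mu_s,\cdot)$ is constant, and $\delta_m^2 G_s$ is controlled by the second flat derivative. It also has $\mathcal C^4$-type regularity, inherited from $V\in\mathcal C^6_{\rm bd}$ after losing two derivatives.

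To estimate it we use the weak propagation of chaos machinery of \cite{Chassagneux2022-jt,Buckdahn2017-op}. Let $U(s,m)$ solve the backward Kolmogorov (master) equation on $\mathcal P_2(\R^d)$ with terminal data $G_s$ at time $s$, so that $U(r,\mu_r) = G_s(\mu_s)=0$ for $r\le s$. By Itô's formula for $U(r,m^n_r)$, $\E G_s(m^n_s) - U(0,m^n_0)$ equals $\frac1n\,\E\int_0^s \frac{\sigma^2}{1}\cdots$, namely the usual $1/n$ correction involving $\mathrm{Tr}\,\nabla_{y}\wgrad U$ evaluated along the diagonal. We therefore must show two things:
\begin{enumerate}[(i)]
\item $\E U(0,m^n_0) = O(1/n^2)$ for iid initial data. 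This follows from a second-order Taylor expansion around $\mu_0$, using that the first-order term vanishes in expectation and the second-order kernel of $U(0,\cdot)$ vanishes at $\mu_0$.
\item The correction integrand, evaluated at $m^n_r$, is itself $O(1/n)$ in expectation. This holds because $\partial_y \wgrad U(r,\mu_r,y)$ vanishes when $G$ vanishes to second order, so the integrand is a function vanishing at $\mu_r$ with bounded derivatives. Its expectation is then $O(1/n)$ by the basic weak estimate \eqref{intro:weakPoC}.
\end{enumerate}
Checking that the master equation preserves this vanishing structure, together with the required derivative bounds for $U$ up to the needed order, is where the six derivatives on $V$ are spent and the main technical work lies.
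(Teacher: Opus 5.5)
Your final step does not go through. The hierarchy you derive,
\[
\frac{d}{dt}H^k_t \le C\frac{k^3}{n^2} + Ck\big(H^{k+1}_t-H^k_t\big) + \frac{Ck}{n^2}, \qquad H^k_0=0,
\]
cannot produce $O(k^2/n^2)$, however it is iterated and whatever a priori bound ($H^n_t\lesssim n$) is used at the top level. Its solution is governed by the Yule-process semigroup, $H^k_t \lesssim n^{-2}\int_0^t \E[\mathcal Y_s^3 \mid \mathcal Y_0=k]\,ds$. Since $\mathcal Y_s\ge k$, this is genuinely of size $t k^3/n^2$; the equality case of the ODE shows it cannot be improved. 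So as written you only prove $\ent(\pi^k[t]\,\|\,\mu^{\otimes k}[t])=O(k^3/n^2)$.

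The source of the problem is your crude bound of $O(k^2/n^2)$ per particle for the $j\le k$ part of the linear term. Set $Z^j := \delta_m V(\mu_t,Y^1_t,Y^j_t)-\int\delta_m V(\mu_t,Y^1_t,z)\,d\mu_t(z)$. You need $\E|\sum_{j\le k}Z^j|^2\le Ck$ rather than $Ck^2$, i.e.\ the off-diagonal terms $\E[Z^i\cdot Z^j]$ must be $O(1/n)$. This does not follow from Lipschitz bounds alone.

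The missing idea is a bootstrap, which is how the paper proceeds.

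First, run your argument once to get the suboptimal bound $O(k^3/n^2)$. In particular this gives $\ent(\pi^3[t]\,\|\,\mu^{\otimes 3}[t])=O(1/n^2)$.

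Second, write $\E[Z^i\cdot Z^j]=\E\big[Z^i\cdot\E[Z^j\mid Y^1[t],Y^i[t]]\big]$. By Kantorovich duality and the T$_1$ inequality, the inner conditional expectation is at most $C$ times the square root of the relative entropy of the conditional law of $Y^j$ given $(Y^1[t],Y^i[t])$ with respect to $\mu$. By the chain rule, the expectation of that entropy is at most $\ent(\pi^3[t]\,\|\,\mu^{\otimes 3}[t])$. Hence the cross terms are $O(1/n)$, and $\E|\sum_{j\le k}Z^j|^2\le Ck+Ck^2/n\le 2Ck$.

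The source term in the hierarchy then drops to $Ck^2/n^2$. Re-running the Gronwall/Yule argument with $p=2$ yields $O(k^2/n^2)$.

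Your treatment of the remainder follows essentially the paper's route: a master-equation semigroup with terminal data vanishing to second order, a second-order Taylor expansion at time $0$ for iid data, and a weak-chaos bound on the It\^o correction. Two points need fixing there.

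First, the $1/n$ correction from It\^o's formula is $\frac{\sigma^2}{n}\int\tr\,\wgrad^2U(r,m,y,y)\,dm(y)$, not a $\nabla_y\cdot\wgrad U$ term, which cancels against the master equation. So the vanishing you need in (ii) is $\wgrad^2U(r,\mu_r,\cdot,\cdot)=0$, which follows from the flow property.

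Second, $\delta_m^2V$ grows linearly in each extra variable, so $G_s\notin\mathcal C^4_{\textup{bd}}$. You need a truncation, with derivative bounds uniform in the cutoff.
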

This rate cannot be improved under the present assumptions, as explained by the Gaussian example in \cite{HierarchiesPaper}. 
Theorem \ref{th: short time} implies bounds in total variation by Pinsker's inequality, and a coupling argument yields control in Wasserstein distance. 
\begin{corollary}
\label{cr: metric convergences short time}
    Under assumption $\ref{assumption on Phi}$, for every $k\leq n $ and every fixed $t>0$,
    \[\|\pi^k[t]-\mu^{\otimes k}[t]\|^2_{\textup{TV}}+\W_{2}^2(\pi^k[t],\mu^{\otimes k}[t]) =O(k^2/n^2)\]    
\end{corollary}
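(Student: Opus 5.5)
The corollary follows from Theorem~\ref{th: short time} by two standard transfers, one for each term on the left-hand side. For the total variation term, Pinsker's inequality gives
\[
\|\pi^k[t]-\mu^{\otimes k}[t]\|^2_{\textup{TV}}\le 2\,\ent(\pi^k[t]\,\|\,\mu^{\otimes k}[t]) = O(k^2/n^2)
\]
(up to the normalization convention for TV). This needs nothing beyond the theorem.

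For the Wasserstein term I would not try to prove a Talagrand T$_2$ inequality for the path-space law $\mu^{\otimes k}[t]$. Under Assumption~\ref{assumption on Phi} the initial law $\mu_0$ is only assumed T$_1$, so such an inequality need not hold. Instead I would use a synchronous-type coupling driven by the entropy bound, as suggested after the theorem.

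\emph{Step 1 (Girsanov representation).} Write $\pi^k[t]$ as the law of a process $(Y^1,\dots,Y^k)$ solving, on $[0,t]$,
\[
dY^i_s = \bigl(V(\mu_s,Y^i_s) + \beta^i_s\bigr)\,ds + \sqrt2\sigma\,dW^i_s .
\]
Here $W$ is a Brownian motion under $\pi^k$ and $\beta^i_s=\E[V(m^n_s,Y^i_s)-V(\mu_s,Y^i_s)\mid \mathcal F^{k}_s]$ is the drift correction obtained by projecting onto the first $k$ coordinates. Since the initial laws coincide (iid $\mu_0$), the chain rule for relative entropy gives
\[
\frac{1}{4\sigma^2}\sum_{i\le k}\E\int_0^t|\beta^i_s|^2ds=\ent(\pi^k[t]\,\|\,\mu^{\otimes k}[t]) = O(k^2/n^2).
\]

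\emph{Step 2 (coupling).} Drive a copy $(X^1,\dots,X^k)$ of $\mu^{\otimes k}[t]$ by $dX^i_s=V(\mu_s,X^i_s)ds+\sqrt2\sigma\,dW^i_s$ with the same $W$ and $X^i_0=Y^i_0$. This is a valid coupling because $V(\mu_s,\cdot)$ is Lipschitz uniformly in $s$, so the SDE has a unique strong solution whose law is $\mu^{\otimes k}[t]$. Subtracting the equations gives
\[
|Y^i_s-X^i_s|\le L\int_0^s|Y^i_r-X^i_r|\,dr+\int_0^s|\beta^i_r|\,dr .
\]
Gronwall's inequality and Cauchy--Schwarz then yield
\[
\E\sup_{s\le t}|Y^i-X^i|^2_s\le C_t\,\E\int_0^t|\beta^i_r|^2dr .
\]
Summing over $i$ gives $\W_2^2(\pi^k[t],\mu^{\otimes k}[t])\le C_t\cdot O(k^2/n^2)$ in the uniform path metric.

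The only point needing care is Step 1. One must identify $\beta$ correctly as the optional projection of the full drift difference, so that the exact entropy identity (rather than an inequality with the wrong sign) holds. This is standard, e.g.\ as in \cite{HierarchiesPaper}: use the mimicking/projection lemma and note that $\beta$ has finite second moment, since $V$ has linear growth and the particles have finite second moments.
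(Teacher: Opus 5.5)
Your proposal is correct and follows the paper's proof essentially verbatim. It uses Pinsker for the total variation term, then couples $Y^{[k]}$ synchronously with $k$ iid McKean--Vlasov copies driven by the projected Brownian motion $W$ of Lemma \ref{lm: projection path space}, with Gronwall and the path-entropy identity converting $\sum_i\E\int_0^t|\beta^i_s|^2\,ds$ into $4\sigma^2\ent(\pi^k[t]\,\|\,\mu^{\otimes k}[t])$. The only difference is cosmetic: you apply Gronwall and Cauchy--Schwarz to the pathwise integral inequality (the noise cancels exactly) instead of applying It\^o's formula to $|X^i-Y^i|^2$ with Young's inequality, which is if anything slightly cleaner.
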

To be clear, in Corollary \ref{cr: metric convergences short time}, $\pi^k[t]$ and $\mu^{\otimes k}[t]$ are measures on the path space $C([0,t];\R^d)^k \cong C([0,t];\R^{dk})$ which is equipped with the usual sup-norm, and the Wasserstein distance is defined accordingly.

\begin{remark}
When the particles take values in dimension  $d=1$, Assumption \ref{assumption on Phi}(1) can be relaxed to require only two bounded Wasserstein derivatives, instead of six in all variables, and the proof is much simpler, particularly in the analysis of the remainder term in \eqref{eq: differential inequality}. See Remark \ref{rem:d=1} for details.
\end{remark}

\begin{remark}
This bound on $\W_{2}^2$ does not invoke a $T_2$ (Talagrand) inequality, as one might guess. The path measure $\mu^{\otimes k}[t]$ need not satisfy a (dimension-free) $T_2$ inequality because we assumed only that $\mu_0$ satisfies the significantly weaker $T_1$ inequality.
\end{remark}

\begin{remark} \label{rk: time dependent setting-intro}
We have stated our result in a time homogeneous setting, but the same applies without change to the case that $ V$ is a time dependent function; we explain how to adapt the proof in Remark \ref{rk: time dependent setting}.    
\end{remark}

In the above results, the constants hidden by the big-O notation grow exponentially in $t$. Under stronger assumption, we obtain similar sharp bounds uniformly in time, at the level of the time-marginals instead of path measures. The extra requirements are displacement monotonicity of the drift (in the sense of optimal transport) and a smallness condition of the interaction.

\begin{assumption}{(UiT)} { \ }
\begin{enumerate}
\item Assumption \ref{assumption on Phi} holds.
\item Monotonicity: There exists $\lambda>0$ such that for any square-integrable $\R^d$-valued random variables $X$ and $Y$ we have
\begin{equation*}
    \E\Big[\big( V(\Law(X),X)- V(\Law(Y),Y)\big)\cdot(X-Y)\Big]\leq -\lambda \E\Big[|X-Y|^2\Big].
\end{equation*}
\item Small interaction: $ \||\wgrad V|^2_{\text{op}}\|_\infty<\lambda^2/3$ (where for a $d\times d$ matrix $M$, $|M|_{\text{op}}$ is the operator norm induced by the Euclidean distance). 
\end{enumerate}
\label{Assumption: UiT}
\end{assumption}

\begin{theorem}
\label{th: UiT}
    Under assumption \ref{Assumption: UiT}, for every $k\leq n$,
    \begin{align*}
        \sup_{t\geq 0} \ent(\pi_t^k\,\|\,\mu_t^{\otimes k})=O(k^2/n^2).
    \end{align*}
     The hidden constant depends on $C_{\textup{T}_1}(0)$, $\mu_0$, and the derivative bounds on $ V$.
\end{theorem}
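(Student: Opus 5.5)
We follow the BBGKY-hierarchy strategy of \cite{HierarchiesPaper,SharpChaosLacker2023}, now at the level of time marginals, combined with a uniform-in-time weak propagation of chaos estimate for the remainder. Write $H^k_t=\ent(\pi^k_t\,\|\,\mu^{\otimes k}_t)$.

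\textbf{Step 1: entropy dissipation at level $k$.} By exchangeability, the marginal $\pi^k_t$ solves a Fokker--Planck equation. For $i\le k$, the drift of particle $i$ is the conditional expectation $\widehat V^i_t(x^{1:k})=\E[V(m^n_t,Y^i_t)\mid Y^{1:k}_t=x^{1:k}]$. Differentiating the relative entropy in time gives
\[
\frac{d}{dt}H^k_t=-\sigma^2 \fin(\pi^k_t\,\|\,\mu^{\otimes k}_t)+\sum_{i\le k}\E\big[(\widehat V^i_t-V(\mu_t,Y^i_t))\cdot \nabla_i\log\tfrac{d\pi^k_t}{d\mu^{\otimes k}_t}\big],
\]
where $\fin$ is relative Fisher information. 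Young's inequality keeps half of the Fisher information and leaves $\frac{1}{\sigma^2}\sum_i\E|\widehat V^i_t-V(\mu_t,Y^i_t)|^2$.

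\textbf{Step 2: Taylor expansion of the drift.} We expand $V(m^n_t,x)-V(\mu_t,x)=\int\delta_mV(\mu_t,x,y)\,d(m^n_t-\mu_t)(y)+R_t(m^n_t,x)$, with $R_t$ quadratic in $m^n_t-\mu_t$. The linear term is a pairwise interaction with kernel $\phi_t(x,y)=\delta_mV(\mu_t,x,y)$, which is Lipschitz. It is therefore handled exactly as in \cite{SharpChaosLacker2023}:
\begin{itemize}
\item the self-interaction term contributes $O(k/n^2)$ per particle;
\item the term coming from $y=Y^j$ with $j\le k$ is controlled directly;
\item the term from $j>k$ is controlled via the conditional law of $Y^{k+1}$ given $Y^{1:k}$ and Pinsker/transport, giving $k(H^{k+1}_t-H^k_t)$.
\end{itemize}
The remainder contributes $k\,\E|R_t(m^n_t,Y^1_t)|^2$. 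Its naive size is $O(1/n)$ at best from $\W$-control, which is not enough.

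\textbf{Step 3: the remainder is $O(1/n^2)$ uniformly in time.} This is the main obstacle. I would write $\E|R_t(m^n_t,Y^1_t)|^2=\E F_t(m^n_t)$ with
\[
F_t(m)=\int |R_t(m,x)|^2\,dm(x)
\]
(up to an $O(1/n^2)$ correction for the diagonal). $F_t$ vanishes to fourth order at $m=\mu_t$, since $R_t$ is second order. The plan is the weak propagation of chaos machinery of \cite{Chassagneux2022-jt}. Let $\mathcal U(s,m)=F_t(\mu^m_{t-s})$ solve the backward Kolmogorov equation on $\mathcal P_2$, where $\mu^m$ is the McKean--Vlasov flow started at $m$. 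Apply It\^o's formula to $\mathcal U(s,m^n_s)$:
\begin{itemize}
\item the $O(1/n)$ term involves the second Wasserstein derivatives of $\mathcal U$ evaluated near $\mu_s$, which are themselves $O(\W_2^2)$-small because $F_t$ vanishes to high order;
\item consequently the error is $O(1/n)\cdot\E\W_2^2(m^n_s,\mu_s)$-type quantities, i.e.\ $O(1/n^2)$.
\end{itemize}
The six derivatives in Assumption \ref{assumption on Phi} are needed to control $\wgrad^2$ of $\mathcal U$ and its space derivatives via the linearized equations of \cite{Buckdahn2017-op}. Uniformity in time needs exponential decay, $|D^{\mathsf w,\boldsymbol i}\mu^m_s|\lesssim e^{-cs}$, of these derivative flows. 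I expect to get this from the displacement monotonicity and the small-interaction condition, Assumption \ref{Assumption: UiT}(2)--(3): a synchronous coupling gives $\W_2(\mu^m_s,\mu^{m'}_s)\le e^{-\lambda s}\W_2(m,m')$, and differentiating this contraction, with $\lambda^2/3$ absorbing cross terms, handles higher derivatives. The initial term $\E F_t(\mu^{m^n_0}_t)$ is bounded using the contraction and iid moments at time $0$. The moments are uniform in time thanks to the $T_1$ assumption.

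\textbf{Step 4: closing the hierarchy uniformly in time.} For the uniform bound we need a dissipative term to replace time integration. A log-Sobolev inequality for $\mu_t$, uniform in $t$, follows from strong displacement convexity: $\mu_t$ is, roughly, a Lipschitz perturbation in the monotone regime, and the LSI is propagated by a Bakry--\'Emery/contraction argument as in \cite{TimeUniformLSI}. We bound $\fin\ge \frac{2}{C_{LS}}H^k$. This yields
\[
\frac{d}{dt}H^k_t\le -cH^k_t+C\frac{k^3}{n^2}+Ck(H^{k+1}_t-H^k_t),
\]
with $H^k_0=0$. The ODE-hierarchy lemma of \cite{SharpChaosLacker2023}, iterated down from the crude global bound $H^n_t\le Cn$ (by synchronous coupling and contraction), then gives $\sup_t H^k_t=O(k^2/n^2)$.
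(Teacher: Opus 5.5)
Your Steps 1--2 match the paper's Proposition~\ref{pr: differential inequality UiT}. The argument breaks in Step 3, and Step 4 has further gaps.

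\textbf{Step 3.} You reduce the $1/n$ It\^o correction to $O(1/n)\cdot\E\W_2^2(m^n_s,\mu_s)$ and call this $O(1/n^2)$. That requires $\E\W_2^2(m^n_s,\mu_s)=O(1/n)$, which is false for $d\ge 3$. Indeed $m^n_s$ has $n$ atoms and $\mu_s$ has a density for $s>0$, so the quantization lower bound (as in Example~\ref{example}) forces $\W_2^2(m^n_s,\mu_s)\gtrsim n^{-2/d}$. You would therefore only get $n^{-1-2/d}$. Your treatment of the initial term has the same defect: the $\W_2$-contraction reduces $\E F_t(\mu^{m^n_0}_t)$ to $\E\W_2^4(m^n_0,\mu_0)$, which is of order $n^{-4/d}$ rather than $n^{-2}$ for $d\ge 3$ \cite{Fournier2015-yh}. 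This is exactly the obstruction in Remark~\ref{rem:d=1}: the Wasserstein route only works in $d=1$.

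The missing idea is never to pass to Wasserstein distances of empirical measures, and instead to exploit the iid structure at time $0$ through flat-derivative Taylor expansions. The paper proceeds as follows.
\begin{enumerate}
\item With $P_tf(\nu)=f(\mu^{0,\nu}_t)$, It\^o gives $\E R_T(m^n_T)=\E P_TR_T(m^n_0)+\frac{\sigma^2}{n}\int_0^T\E g_t(m^n_t)\,dt$, where $g_t(\nu)=\int\tr\wgrad^2P_{T-t}R_T(\nu,x,x)\,d\nu(x)$.
\item The flow property $\mu^{t,\mu_t}_T=\mu_T$, together with the vanishing of $\wgrad R_T$, $\nabla_y\wgrad R_T$ and $\wgrad^2R_T$ at $\mu_T$, gives $\wgrad^2P_{T-t}R_T(\mu_t,\cdot,\cdot)=0$. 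Hence $g_t(\mu_t)=0$.
\item A second It\^o expansion along $s\mapsto P_{t-s}g_t(m^n_s)$ gives $\E g_t(m^n_t)=\E P_tg_t(m^n_0)+O(1/n)$.
\item A first-order flat Taylor expansion of $P_tg_t$ around $\mu_0$, for the iid measure $m^n_0$, gives $O(1/n)$. A second-order expansion of $P_TR_T$ gives $O(1/n^2)$. Both work because the lower-order terms vanish: $P_tg_t(\mu_0)=0$, $P_TR_T(\mu_0)=0$, and $\delta_m^2P_TR_T(\mu_0,x,y)$ splits as $f(x)+g(y)$.
\end{enumerate}
The fourth flat derivative of $P_TR_T$ appearing here is what forces six derivatives on $V$. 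Uniformity in $T$ comes from exponential decay of the derivative flows, which the paper derives from monotonicity (Lemma~\ref{lem:monotonicity}), not from the smallness condition.

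\textbf{Step 4, LSI near $t=0$.} $\mu_0$ only satisfies T$_1$, so there is no LSI at time $0$ to propagate. None need hold uniformly near $t=0$: for a two-point $\mu_0$, $C_{\textup{LS}}(\mu_t)\to\infty$ as $t\downarrow 0$. You therefore cannot run the dissipative hierarchy from $H^k_0=0$. The paper shows the LSI constant comes down from infinity, in two stages. Reverse-transport (Harnack) bounds on $D_2(P^x_t\,\|\,P^y_t)$ plus the mixture LSI of \cite{Chen2021-ch} give a finite constant at some time $T^*$. Then \cite{TimeUniformLSI} brings it within a factor $(1+\delta)$ of $2\sigma^2/\lambda$ for $t\ge\overline T$. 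The hierarchy is started at $\overline T$, with initial data $Ck^2/n^2$ supplied by the finite-horizon Theorem~\ref{th: short time}.

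\textbf{Step 4, $k^3$ versus $k^2$.} With source term $Ck^3/n^2$, the hierarchy lemma gives only $O(k^3/n^2)$, since its stationary solution is of that order. The condition $\||\wgrad V|^2_{\text{op}}\|_\infty<\lambda^2/3$ is precisely what makes the $p=3$ exponent negative there. To reach $k^2/n^2$ you must bootstrap:
\begin{enumerate}
\item The $k^3/n^2$ bound at $k=3$ gives $\sup_t\ent(\pi^3_t\,\|\,\mu^{\otimes 3}_t)=O(1/n^2)$.
\item Set $Z^i=\delta_mV(\mu_t,Y^1_t,Y^i_t)-\int\delta_mV(\mu_t,Y^1_t,z)\,d\mu_t(z)$. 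Via the transport inequality, the previous bound makes the cross terms $\E[Z^i\cdot Z^j]$ in $\E|\sum_{i\le k}Z^i|^2$ of order $1/n$.
\item This improves the source term to $Ck^2/n^2$, and rerunning the lemma with $p=2$ gives the claim.
\end{enumerate}
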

Bounds in entropy readily translate again to uniform in time estimates for the total variation and Wasserstein metrics.
\begin{corollary}
\label{cr: metric convergence UiT}
Under assumption \ref{Assumption: UiT}, for every $k \le n$,
\begin{align*}
    \sup_{t\geq 0}\|\pi_t^{k}-\mu_t^{\otimes k}\|^2_{L^1(\R^d)}+\sup_{t\geq 0} \W_2^2(\pi_t^k,\mu_t^{\otimes k})= O(k^2/n^2).
\end{align*}     
\end{corollary}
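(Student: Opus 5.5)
The plan is to derive Corollary \ref{cr: metric convergence UiT} directly from Theorem \ref{th: UiT}, treating the two terms separately. For total variation, Pinsker's inequality gives $\|\pi_t^k-\mu_t^{\otimes k}\|_{L^1}^2\le 2\,\ent(\pi_t^k\,\|\,\mu_t^{\otimes k})$ for every $t$. Taking the supremum over $t\ge 0$ and invoking Theorem \ref{th: UiT} bounds this term by $O(k^2/n^2)$.

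For the Wasserstein term, a transport inequality for $\mu_t^{\otimes k}$ with a constant that is uniform in $t$ and $k$ would finish immediately. I will aim for a Talagrand $T_2$ inequality. Under Assumption \ref{Assumption: UiT} only a $T_1$ inequality is assumed at time $0$, so I do not expect $T_2$ to hold for $\mu_t$ uniformly down to $t=0$. I will therefore split time into two regimes.

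\emph{Regime $t\ge 1$.} Here I will show that $\mu_t$ satisfies a log-Sobolev inequality with a constant uniform in $t\ge 1$. By Otto--Villani this gives $T_2$, and since $T_2$ tensorizes with a dimension-free constant, $\mu_t^{\otimes k}$ satisfies $T_2$ with the same constant. Then $\W_2^2(\pi_t^k,\mu_t^{\otimes k})\le 2C\,\ent(\pi_t^k\,\|\,\mu_t^{\otimes k})=O(k^2/n^2)$ by Theorem \ref{th: UiT}. To get the uniform LSI I will view $\mu_t$ as the time-$t$ law of a linear, time-inhomogeneous diffusion whose drift $b_s(x)=V(\mu_s,x)$ is frozen along the flow. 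The monotonicity assumption, applied with $\Law(X)=\Law(Y)$, shows that $x\mapsto V(\mu_s,x)$ is $\lambda$-strongly monotone, and its derivative is bounded. Standard results then apply: either the Bakry--\'Emery / reflection-coupling bounds for the semigroup with uniformly contractive drift, or an explicit regularization from $\mu_{t-1}$ over a unit time interval combined with Holley--Stroock and Aida--Shigekawa-type perturbation arguments. Either route should give a uniform LSI for $\mu_t$ when $t\ge1$. A uniform second-moment bound on $\mu_t$, which follows from the contraction, keeps the constants controlled.

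\emph{Regime $t\le 1$, the main obstacle.} Here I will use Corollary \ref{cr: metric convergences short time} with horizon $1$. It bounds the $\W_2^2$ distance between the path laws on $[0,1]$ by $O(k^2/n^2)$. Since the projection to the time-$t$ marginal is $1$-Lipschitz from sup-norm paths, this bounds $\sup_{t\le1}\W_2^2(\pi_t^k,\mu_t^{\otimes k})$ by the same quantity. The constant depends only on the data, so no uniform-in-time issue arises on this bounded window.

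Combining the two regimes yields the corollary. If the LSI route in the first regime proves delicate, the fallback is to repeat the coupling argument behind Corollary \ref{cr: metric convergences short time} on each window $[t-1,t]$. That argument would start from the entropy bound at time $t-1$ provided by Theorem \ref{th: UiT} together with a uniform $T_1$ bound for $\mu_{t-1}$, which comes from uniform sub-Gaussian moments under contraction.
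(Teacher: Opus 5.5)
\textbf{Verdict: the architecture matches the paper's, but the LSI step has a genuine gap.} Like the paper, you use Pinsker for the $L^1$ term, and Corollary \ref{cr: metric convergences short time} with the $1$-Lipschitz time projection on an initial window. After that window you use an LSI for $\mu_t$, tensorized and turned into $T_2$ by Otto--Villani, together with Theorem \ref{th: UiT}.

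\textbf{The gap.} You claim that $\mu_t$ satisfies an LSI uniformly in $t\ge 1$. Under Assumption \ref{Assumption: UiT} this is false in general, because $\mu_0$ is only assumed to satisfy T$_1$.

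Take $V(\mu,x)=-\lambda x$, which satisfies Assumption \ref{Assumption: UiT} with $\wgrad V=0$. Then $\mu_1$ is the law of $e^{-\lambda}X_0$ plus an independent centered Gaussian of variance $\sigma^2(1-e^{-2\lambda})/\lambda$. A Gaussian smoothing of a subgaussian measure need not satisfy any LSI when the smoothing variance is small compared with the subgaussian constant. For instance, in $d=1$ let $\mu_0$ have atoms at $2^j$, $j\ge 0$, with weights proportional to $e^{-4^j/(2K)}$, and take $K$ large. The smoothed density in the gap between consecutive atoms becomes negligible relative to the mass beyond the gap, so a smoothed indicator of that tail violates the inequality. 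Hence $\mu_1$ can have infinite LSI constant when $C_{\textup{T}_1}(0)$ is large relative to $\sigma^2/\lambda$.

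The tools you list do not reach $\mu_t$:
\begin{itemize}
\item Bakry--\'Emery-type arguments give an LSI only for the point-started kernels $P^x_t$.
\item Reflection coupling gives $\W_1$-contraction, not a functional inequality.
\item Holley--Stroock and Aida--Shigekawa need a controlled perturbation of a log-concave measure, which the mixture $\mu_t=\int P^x_t\,d\mu_0(x)$ is not. Regularizing from $\mu_{t-1}$ over a unit interval produces the same kind of mixture.
\end{itemize}

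\textbf{What is missing.} You need a mixture LSI, which is what Proposition \ref{pr: LSI} supplies.
\begin{itemize}
\item The Harnack-type bound $D_2(P^x_t\,\|\,P^y_t)\le \exp\{\lambda|x-y|^2/(e^{2\lambda t}-1)\}$ is fed into the mixture theorem of \cite{Chen2021-ch}.
\item That theorem needs $\int D_2^2(P^x_t\,\|\,P^y_t)\,d\mu_0^{\otimes 2}(x,y)<\infty$. By subgaussianity this holds only once $e^{2\lambda t}-1$ is large compared with $\lambda C_{\textup{T}_1}(0)$, so you get a finite LSI constant at a time $T^*$ depending on $C_{\textup{T}_1}(0)$.
\item The LSI is then propagated to all $t\ge T^*$ by the interpolation bound of \cite{TimeUniformLSI}.
\end{itemize}

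\textbf{The fix.} Replace your threshold $1$ by this $\overline T$ and the argument goes through as you wrote it. The window $[0,\overline T]$, which you called the main obstacle, is the easy part. Corollary \ref{cr: metric convergences short time} with horizon $\overline T$ handles it, and its constant is allowed to depend on $\overline T$.

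\textbf{Your fallback does not close the gap.} Synchronous coupling on $[t-1,t]$ needs an $L^2$ coupling of $\pi^k_{t-1}$ and $\mu^{\otimes k}_{t-1}$ of cost $O(k^2/n^2)$, which is exactly the $\W_2$ bound being proved. Moreover, a T$_1$ inequality for $\mu_{t-1}$ does not tensorize without loss: the constant for $\mu_{t-1}^{\otimes k}$ grows linearly in $k$. Combined with Theorem \ref{th: UiT} it only gives $\W_1^2(\pi^k_{t-1},\mu^{\otimes k}_{t-1})\lesssim k\,\ent(\pi^k_{t-1}\,\|\,\mu^{\otimes k}_{t-1})=O(k^3/n^2)$. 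That is neither sharp in $k$ nor a $\W_2$ bound.
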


\subsection{Comments on the assumptions}

The assumption that $V$ has bounded Wasserstein and space gradient is important; in the pairwise case $ V(\mu,x)=\int \phi(x,y)\,d\mu(y)$ this means that $\phi$ is Lipschitz, the original assumption of \cite{HierarchiesPaper}. The boundedness of higher order derivatives is not essential and could most likely be relaxed with an appropriate approximation procedure. While the existence of $6$ derivatives is a demanding assumption, it is satisfied in many applications of interest; moreover, in low dimension fewer than $6$ derivatives are needed.
The other significant assumption is that $\mu_0$ concentrates well enough to satisfy a T$_1$ inequality, a useful property that leads the particles $Y_t^i$ to also concentrate well.

We do not think that the requirement for existence of $6$ derivatives is sharp, but a simple example shows that having at least one derivative is necessary: Lipschitzianity is not enough to guarantee dimension-free convergence rates.
\begin{example}
\label{example}
    Let $\gamma$ be the law of a $d$-dimensional Brownian motion, so that $\gamma_t$ is a Gaussian measure on $\R^d$ with mean 0 and variance $tI$. Define the function $ V_t(\nu)=\W_1(\nu,\gamma_t)$ and consider the particle system 
    \begin{align*}
        dY_t^i &=  V_t(m_t^n)\cdot\mathbf 1\,dt+dB_t^i\\
        Y_0^i&=0\,\,,\,\, i=1,\dots,n.
    \end{align*}
    where $\mathbf 1\in \R^d$ is a vector of all ones. The function $ V_t$ is Lipschitz (in the measure argument, and trivially in space), but does not have Wasserstein derivatives. The corresponding McKean Vlasov process $X_t$ is simply a Brownian motion: indeed Brownian motion solves the SDE, and the solution is unique. Then, using the representation of $\pi^1[T]$ given by Lemma \ref{lm: projection path space} along with a standard Girsanov calculation of relative entropy, 
    \begin{align*}
\ent(\pi^1[T]\,\|\,\mu[T]) =\ent(\pi^1[T]\,\|\,\gamma[T]) &=\frac d2\int_0^T \E\Big[\E\big[\W_1(m_t^n,\gamma_t)\mid Y^1[t]\big]^2\Big]\,dt\\
&\geq \frac d2 \inf_{q} \W_1^2(q,\gamma_t),
    \end{align*}
    where the infimum is taken over all atomic probability measures $q$ with $n$ atoms. A well-known quantization lower bound  (see, e.g., \cite[Theorem 2.1]{Quantization})  yields
    \[\lim_{n\to \infty} n^{2/d}\inf_{|Q|=n} \W_1^2(Q,\gamma_t) \geq C_d>0\]
    and thus it is impossible to obtain the good $1/n^2$ rate.
\end{example}

Assumption \ref{Assumption: UiT}(2) is natural from a couple of perspectives. In the case where $V=-\wgrad\Psi$ for some smooth functional $\Psi$ on Wasserstein space, it is equivalent to the $\lambda$-displacement convexity of $\Psi$ in the sense of optimal transport \cite[Lemma 3.6]{GangboConvexity}. In the pairwise case $V(\mu,x)= -\nabla U(x) - \int  \nabla W(x-y)\,d\mu(y)$ this holds when $U$ is strongly convex and $W$ is convex, which is a standard regime for uniform-in-time propagation of chaos ever since the work \cite{Malrieu2001-va}. Assumption \ref{Assumption: UiT}(3) is a smallness condition, constraining the strength of the interaction relative to the strength of monotonicity $\lambda$. Restrictions of this kind are common in the literature on uniform-in-time propagation of chaos \cite{elementaryUITchaos,UiTWeakChaos,SharpChaosLacker2023}, particularly for going beyond the aforementioned convex regime. It seems unlikely assumption \ref{Assumption: UiT}(3) is necessary for the sharp rate $(k/n)^2$, given that we are imposing the convexity assumption \ref{Assumption: UiT}(2), but it remains an open question if the sharp rate can be obtained without such a smallness condition even in the pairwise case treated in \cite{SharpChaosLacker2023}. This was recently achieved in the stationary (equilibrium) regime in \cite{ren2024sizechaosgibbsmeasures}.

Sharpening previous work on sharp uniform-in-time propagation of chaos \cite{SharpChaosLacker2023,TimeUniformLSI}, we do not directly assume a log-Sobolev inequality (LSI) for the initial measure $\mu_0$ or its dynamical counterpart $\mu_t$. Instead, we assume merely the T$_1$ inequality, and we exploit the regularizing effects of the dynamics to prove that $\mu_t$ satisfies a LSI for large enough time, with constant uniformly bounded for $t \ge t_0$; this is an application of a recent result of \cite{Chen2021-ch}, which shows that the LSI constant ``comes down from infinity'' for subgaussian initial conditions. We then apply the short-time result from Theorem \ref{th: short time} to control the time interval $[0,t_0]$. 

\begin{remark}
Factor of $3$ in Assumption \ref{Assumption: UiT}(3) agrees with the smallness assumption of \cite[Theorem 2.1(1)]{SharpChaosLacker2023}, after correcting a minor error therein: In their proof of Theorem 2.1(1) on page 468 it is required that $\alpha > 3$, which means that $\sigma^4 > 12\gamma\eta$ in their notation is required for their proof of the sharp uniform in time rate $(k/n)^2$. To map their notation to ours, note that their $\sigma$ is our $\sqrt{2}\sigma$, their log-Sobolev constant $\eta$ is our $\sigma^2/2\lambda$, and their $\gamma$ is our $(2\sigma^2/\lambda)\||\wgrad V|^2_{\text{op}}\|_\infty$. Hence, their condition $\sigma^4 > 12\gamma\eta$ is equivalent to our $\||\wgrad V|^2_{\text{op}}\|_\infty < \lambda^2/3$.
\end{remark}

\subsection{Application: Mean Field Langevin Dynamics}
Motivated by models of scaling limits of neural networks and connections to optimization problems over the space of probability measures, the paper \cite{MeanFieldLangevin} introduced the mean field Langevin dynamics (MFLD)
\begin{equation} 
    \label{eq: MFLD}
    dX_t =-\wgrad \Psi(\Law(X_t),X_t)\,dt+\sqrt{2}\sigma dB_t, 
\end{equation}
where we recall that $\Law(X_t)$ denotes the law of $X_t$.
Equation \eqref{eq: MFLD} (or rather, the associated nonlinear Fokker-Planck PDE) can be interpreted as the Wasserstein gradient flow of the functional 
\begin{equation}
\label{eq: MFLD function}
    \mathcal P_2(\R^d)\ni\mu \mapsto \Psi(\mu)+\sigma^2 \int \mu\log\mu,
\end{equation} 
with the integral (differential entropy) interpreted as $+\infty$ if $\mu$ does not possess a sufficiently integrable density.
A key property of MFLD is that  $\mu_t$ converges to the minimizer of \eqref{eq: MFLD function} as $t\to\infty$, and does so very quickly under the correct assumptions (see for instance \cite{chizat2022meanfield}). This makes simulating MFLD a viable approach to sampling from the measure which minimizes \eqref{eq: MFLD function}. Two popular and distinct sets of assumptions are known to lead to fast convergence: the flat convexity regime, where $\Psi$ is taken to be convex in the usual sense, and the displacement convexity regime where the functional $\Psi$ is convex along displacement interpolations, i.e., geodesics in the Wasserstein space $(\mathcal{P}_2(\R^d),\W_2)$.

The most natural way to simulate MFLD is via the particle approximation
\begin{equation}
\label{eq: MFLD particles}
    dY_t^i=-\wgrad \Psi(m_t^n,Y_t^i)\,dt+\sqrt{2}\sigma dB_t^i
\end{equation}
The quality of this particle approximation to  \eqref{eq: MFLD} is precisely a question of quantitative propagation of chaos.
This question has been tackled by \cite{ChaosMFLD} and \cite{suzuki2023uniformintime} (see also \cite{ChaosKineticMFLD} for the related kinetic version). The article \cite{ChaosMFLD} proves a strong propagation of chaos result with global methods, deducing from subadditivity that
\begin{equation*}
    \sup_{t\geq 0}\ent (\pi_t^k \,\|\,\mu_t^{\otimes k}) = O \Big(e^{-at}+\frac kn \Big).
\end{equation*}
On the other hand \cite{suzuki2023uniformintime} proves a weak propagation of chaos result for a specific function $V$ with a strong (superquadratic) regularization: for every $f \in \mathcal C_{\text{bd}}^2(\mathcal P(\R^d)),$
\begin{equation*}
    \sup_{t\geq 0}\E[|f(m_t^n)-f(\mu_t)|]^2 = O(1/n).
\end{equation*}
The more recent paper \cite{nitanda2024improved} improved the dependence (hidden here) on the log-Sobolev constants. See also the recent work \cite{chewi2024uniform,wang2024uniform} with improved uniform-in-$n$ bounds on the LSI constant of the $n$-particle system, which lead quickly to uniform-in-time propagation of chaos when combined with finite-horizon bounds.
These results were all obtained in the flat convexity regime, which arises naturally in models of two-layer neural networks.

The natural  question of \emph{sharp} uniform-in-time propagation of chaos for MFLD has not yet been addressed.
An application of Theorem \ref{th: UiT} yields such a result in the displacement convex regime:

\begin{corollary}
    \label{co: Mean Field Langevin Dynamics}
    Take $\lambda>0$. Let the functional $\Psi$ be $\lambda$-convex, in the sense that for any square-integrable $\R^d$-valued random variable $X$ and $Y$ defined on any probability space, and any $t \in (0,1)$,
    \begin{equation}
    \Psi(\Law(t X + (1-t)Y)) \le t\Psi(\Law (X)) + (1-t)\Psi(\Law (Y)) - \frac12\lambda t(1-t)\E|X-Y|^2. \label{asmp:convexityUIT}
    \end{equation}
    Assume $V=-\wgrad \Psi$ exists and satisfies Assumption \ref{assumption on Phi} as well as $\|\|\wgrad^2 \Psi\|_{\text{op}}\|^2_\infty < \lambda^2/3$. Then
    \begin{align*}
        \sup_{t\geq 0}\ent (\pi_t^k\,\|\,\mu_t^{\otimes k})=O(k^2/n^2).
    \end{align*}
\end{corollary}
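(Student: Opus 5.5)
The corollary will follow from Theorem \ref{th: UiT} once we check that $V=-\wgrad\Psi$ satisfies Assumption \ref{Assumption: UiT}. Part (1) is assumed directly. For part (3), note that $\wgrad V=-\wgrad^2\Psi$, so $\||\wgrad V|_{\text{op}}^2\|_\infty=\|\|\wgrad^2\Psi\|_{\text{op}}\|_\infty^2<\lambda^2/3$, which is exactly the hypothesis. It therefore remains only to derive the monotonicity condition (2) from the $\lambda$-convexity \eqref{asmp:convexityUIT}.

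To obtain (2), we fix square-integrable $X,Y$ on a common probability space and set $X_t=Y+t(X-Y)$ and $g(t)=\Psi(\Law(X_t))$. The convexity inequality \eqref{asmp:convexityUIT}, applied along this segment with rescaled endpoints, says that $g(t)-\frac{\lambda}{2}t^2\E|X-Y|^2$ is convex on $[0,1]$. Because $\Psi$ has bounded Lipschitz Wasserstein derivatives (Assumption \ref{assumption on Phi}(1) applied to $V=-\wgrad\Psi$), the chain rule along random-variable curves in Wasserstein space applies (e.g.\ \cite[Chapter 5]{Carmonabook1}). It gives
\[
g'(t)=\E\big[\wgrad\Psi(\Law(X_t),X_t)\cdot(X-Y)\big].
\]
For a $C^1$ function, convexity of $g-\frac{\lambda}{2}t^2\E|X-Y|^2$ is equivalent to the slope inequality $g'(1)-g'(0)\ge\lambda\E|X-Y|^2$. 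Substituting the chain rule at $t=1$ and $t=0$ yields
\[
\E\big[(\wgrad\Psi(\Law(X),X)-\wgrad\Psi(\Law(Y),Y))\cdot(X-Y)\big]\ge\lambda\E|X-Y|^2 .
\]
Multiplying by $-1$ gives exactly (2) for $V=-\wgrad\Psi$. This is the equivalence quoted from \cite[Lemma 3.6]{GangboConvexity}, and we could also simply cite that lemma.

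With all three conditions verified, Theorem \ref{th: UiT} gives the claimed bound, $\sup_t\ent(\pi^k_t\,\|\,\mu^{\otimes k}_t)=O(k^2/n^2)$. The only step needing care is the justification of the chain rule for $g'$, namely differentiability of $t\mapsto\Psi(\Law(X_t))$ with the stated formula. It holds because $\wgrad\Psi$ is Lipschitz in $(\W_2,x)$ and of linear growth, so dominated convergence applies to the difference quotients. Everything else is bookkeeping.
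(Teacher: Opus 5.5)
Your proof is correct and follows the same route as the paper. Both reduce to Theorem \ref{th: UiT} and derive the monotonicity condition from $\lambda$-convexity via the first-order behaviour of $X\mapsto\Psi(\Law(X))$: the paper (Lemma \ref{lm: convexity implies monotonicity}) adds two supporting-hyperplane inequalities for the Lions lift, while you use $h'(0)\le h'(1)$ for the convex function $h(t)=\Psi(\Law(Y+t(X-Y)))-\frac{\lambda}{2}t^2\E|X-Y|^2$. One cosmetic slip: convexity of $h$ implies, but is not equivalent to, the endpoint slope inequality; only that implication is used, so nothing breaks.
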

\begin{proof}
Apply Theorem \ref{th: UiT} after noting that the convexity assumption \eqref{asmp:convexityUIT} implies (in fact, is equivalent to) the monotonicity condition \ref{Assumption: UiT}(2) for $V=-\wgrad \Psi$; see Lemma \ref{lm: convexity implies monotonicity}.
\end{proof}

Corollary \ref{co: Mean Field Langevin Dynamics} imposes stronger assumptions than previous work on MFLD, namely higher order smoothness and the weak interaction condition, but it improves the rate of convergence to the sharp $k^2/n^2$. 

There are two natural followup problems which we do not address here and would likely require different methods. The first, as mentioned before, is removing the likely unnecessary smallness condition $\|\wgrad^2 \Psi\|^2_\infty < \lambda^2/3$. The second is to address the flat-convex instead of displacement convex regime, in light of its relevance to neural network models. We note that \eqref{asmp:convexityUIT} is equivalent to the usual notion of $\lambda$-displacement convexity from the theory of optimal transport, as explained in \cite[Lemma 3.6]{GangboConvexity}; it is also equivalent to the $(\lambda/n)$-convexity of the function $(\R^d)^n \ni (y^1,\ldots,y^n) \mapsto \Psi(\frac{1}{n}\sum_{i=1}^n \delta_{y^i})$.

\subsection{Application: Mean Field Games}

In this section we introduce the mean field game convergence problem and explain how Theorem \ref{th: short time} leads to a new sharp propagation of chaos estimate.
Fix a time horizon $T > 0$, and consider cost functions $f_0,g: \mathcal P(\R^d)\times \R^d \to \R$ and $f_1:\R^d \times \R^d \to \R$. Precise assumptions on these functions will be given in the theorem statements below. Consider the following game played by $i=1,\dots,n$ players: each player controls a private state process
\begin{equation*}
    dY_t^i = \alpha^i(t,Y_t)\,dt+\sqrt 2 \sigma dB_t^i, \qquad Y_0^i \sim \text{ iid }  \mu_0 ,
\end{equation*}
and wishes to minimize the cost functional
\begin{align*}
    J_n(\alpha^1,\ldots,\alpha^n)= \E\bigg[\int_0^T f_0( m_t^n,Y_t^{i})+f_1(Y_t^{i},\alpha^i(t,Y_t))\,dt+g( m_T^n,Y_T^{i}).\bigg]
\end{align*}
where $m_t^n$ is the empirical measure associated to $Y_t^1,\dots,Y_t^n$. Here the choice variable $\alpha_i$ is a Markovian (closed-loop) control, chosen to be a measurable function of $Y_t=(Y^1_t,\ldots,Y^n_t)$ of linear growth, so that the SDE for $Y$ has a unique in law solution. The associated  Hamiltonian $H$ is
\begin{equation*}
    H(x,y)=-\inf_{a \in \R^d}\{a\cdot y+f_1(x,a)\}.
\end{equation*}
We will always assume that $H$ is a continuous function with a derivative in the $y$ argument so that the following expressions make sense. A Nash equilibrium is a profile of controls $(\alpha^i)_{i =1}^n$ such that for each $i$,
\[
\alpha^i \in \argmin_{\beta^i} J_n( \beta^i, (\alpha^j)_{j\neq i}).
\]
Under some regularity assumptions (see for instance \cite[Proposition 6.26]{CDMFGbook2}) a Nash equilibrium exists and takes the form
\begin{equation}
    \label{eq: controls in a NE}\alpha^i(t,x)= -\nabla_y H(x_i,\nabla_{x_i} u^{n,i}(t,x)), \quad x=(x_1,\ldots,x_n) \in (\R^d)^n,
\end{equation}where $(u^{n,i})_{i=1}^n: [0,T] \times (\R^d)^n\to \R$ is the solution to the so-called Nash system of PDEs:
\begin{align}
\begin{split}
\label{eq: Nash system}
    \partial_t u_t^{n,i}(x)&   - H(x^i,\nabla_{x^i}u^{n,i}_t(x))  +\sigma^2\sum_{j=1}^n \Delta_{x^j} u_t^{n,i}(x) + f_0(m_x^n,x^i) \\
        &- \sum_{j\neq i}^n \nabla_y H(x^j,\nabla_{x^j}u^{n,j}_t(x))\cdot \nabla_{x^j}u_t^{n,i}(x)  =0, \\
    u_T^{n,i}(x)&=g(m_x^n,x^i), \qquad m^n_x := \frac{1}{n}\sum_{j=1}^n\delta_{x^j}.
\end{split}
\end{align}
The equilibrium trajectories follow the dynamics 
\begin{align} 
    \label{eq: trajectories in Nash Equilibrium}
    dY_t^i &= -\nabla_y H(Y_t^i,\nabla_{x_i} u^{n,i}(t,Y_t))\,dt+\sqrt 2 \sigma \,dB_t^i.   
\end{align}
Viewing $Y^i$ as a random element of $C([0,T];\R^d)$, let $\pi^k=\Law(Y^1,\dots,Y^k)$ for $1 \le k \le n$.

The large-$n$ behavior of the above $n$-player game is expected to be captured by the following mean field game, in which the large finite population is replaced by a single representative player interacting with a mean field.
The representative player controls the state process $dX_t=\alpha_t\,dt+\sqrt 2 \sigma dB_t$. Assuming the distribution of the other players' states at time $t$ is given by $\mu_t$, the representative player seeks to choose $\alpha$ to minimize the cost functional
\begin{align*}
    J(\alpha,\mu)= \E\bigg[\int_0^T f_0(\mu_t,X_t)+f_1(X_t,\alpha_t)\,dt+g(\mu_T, X_T)\bigg].
\end{align*}
A mean field equilibrium (MFE) is a couple $(\mu,\alpha)$ such that $\alpha \in \argmin J(\cdot,\mu)$ and $\mu=\Law(X)$. Associated to this mean field game is the so-called \emph{master equation}, a PDE for a function $U : [0,T] \times \mathcal{P}_2(\R^d) \times \R^d \to \R$ which plays the role of the value function:
\begin{align}
\label{eq: Master Equation}
\begin{split}
\partial_t U_t(\nu,x)&-H(x,\nabla_x U_t(\nu,x))+\sigma^2 \Delta_x U_t(\nu,x)+f_0(\nu,x) \\
    &+\int_{\R^d} \Big(\sigma^2\text{Tr}(\nabla_y \wgrad U_t(\nu,x,y)) - \nabla_yH(y,\nabla_xU_t(\nu,y)) \wgrad U_t(\nu,x,y)\Big)\,d\nu(y)=0 \\
    U_T(\nu,x)&=g(\nu,x).
\end{split}
\end{align}
When the master equation admits a suitably smooth solution, the unique MFE control is given by 
\begin{equation}
    \label{eq: controls in MFE}
    \alpha(t,\nu,x)=-\nabla_y H(x,\nabla_x U_t(\nu,x))
\end{equation} so that the equilibrium state process is 
\begin{align}
    \label{eq: trajectories in MFE}
    dX_t&=-\nabla_y H(X_t,\nabla_x U_t(\Law(X_t),X_t))\,dt+\sqrt 2 \sigma\,dB_t.
\end{align}
Let $\mu=\L(X)$ denote the law (on path space) of this process. We refer  to \cite{MasterEquation} and \cite{Carmonabook1} for additional background on mean field games.

The \emph{convergence problem} in mean field game theory is the problem of rigorously showing that the $n$-player Nash equilibria converge in some sense to the MFE. This is known to hold qualitatively in great generality \cite{lacker2020convergence,djete2023large}. The earliest quantitative results go through the master equation \cite{MasterEquation}, but more recent progress has developed non-asymptotic viewpoints based on an analysis of the PDEs \cite{cirant2025priori,JacksonGAMES} or forward-backward SDEs \cite{jackson2024quantitative,possamai2025non} which characterize the $n$-player equilibria. 
Comparing the $n$-player equilibrium states to the state process of the MFE, these papers obtain estimates like
\begin{equation}
\sup_{t \in [0,T]}\W^2_1(\mu^{\otimes k}_t,\pi^k_t)=O(k/n). \label{MFGpriorbounds}
\end{equation}
The following theorem improves this to $(k/n)^2$, when the master equation admits a smooth enough solution.

\begin{theorem}
\label{th: chaos for MFG}
Assume that:
    \begin{enumerate} 
    \item The functions $f_0,f_1,g$ are measurable and subpolynomial in the following sense: There exists $C\geq 0$ such that for each $x,a \in \R^d, \nu \in \mathcal P_2(\R^d)$,
    \[|f_0(\nu,x)|+|f_1(x,a)|+|g(\nu,x)|\leq C\Big(1+|x|^2+|a|^2+\int |y|^2\,d\nu(y)\Big).\] 
         \item There exists a classical solution $(u^{i,n})_{i =1}^n$ to the Nash system \eqref{eq: Nash system} that has space derivatives with linear growth, and a corresponding Nash equilibrium given by \eqref{eq: controls in a NE} with a well posed optimal state process $\eqref{eq: trajectories in Nash Equilibrium}$.
         \item  There exists a classical solution $U$ to the master equation \eqref{eq: Master Equation} with $\nabla_x U,\nabla_x \wgrad U,\wgrad U$ bounded and Lipschitz uniformly in time and a corresponding Mean Field Equilibrium $(\mu,\alpha)$ where $\alpha$ is given by \eqref{eq: controls in MFE} and $\mu$ is the law on path space of the equilibrium state process given by \eqref{eq: trajectories in MFE}.
     \end{enumerate}
    Assume further that
    \begin{enumerate}[(a)]
        \item The function $\nabla_x U_t$ belongs to $\mathcal C_{\textup{bd}}^{6}(\mathcal P_2(\R^d)\times \R^d)$ uniformly in $t$:
        \begin{align*}
            \sup_{t\leq T}\sup_{0<\mathsf w +|\boldsymbol{i}|\leq 6}|D^{\mathsf w,\boldsymbol{i}}\nabla_x U_t|\leq C<\infty.
        \end{align*}
        \item The initial condition $\mu_0$ satisfies the $T_1$ inequality of assumption \ref{assumption on Phi}(2).
        \item The function $(x,y)\mapsto \nabla_y H(x,y)$ has 6 bounded derivatives.
    \end{enumerate}
    Then 
\begin{equation*}
\W^2_1(\mu^{\otimes k},\pi^k) = O(k^2/n^2).
\end{equation*}
\end{theorem}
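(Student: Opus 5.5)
The plan is to reduce Theorem \ref{th: chaos for MFG} to Theorem \ref{th: short time} applied to an auxiliary McKean--Vlasov system, plus a separate comparison between that auxiliary system and the true Nash equilibrium. Define the time-dependent drift
\[
V_t(\nu,x) := -\nabla_y H\big(x,\nabla_x U_t(\nu,x)\big).
\]
By assumptions (a) and (c), and the chain rule for Wasserstein and space derivatives (Fa\`a di Bruno on $\mathcal P_2(\R^d)\times\R^d$), $V_t\in\mathcal C^6_{\textup{bd}}(\mathcal P_2(\R^d)\times\R^d)$ uniformly in $t\le T$. Every mixed derivative of $V_t$ of order at most $6$ is a polynomial in derivatives of $\nabla_yH$ of order at most $6$ and derivatives $D^{\mathsf w,\boldsymbol i}\nabla_x U_t$ of order at most $6$, all of which are bounded and Lipschitz. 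The McKean--Vlasov equation with drift $V_t$ is exactly \eqref{eq: trajectories in MFE}, so its law is $\mu$. Let $\tilde Y=(\tilde Y^1,\dots,\tilde Y^n)$ be the associated particle system \eqref{eq: particle SDE}, with drift $V_t(m^n_t,\cdot)$, the same Brownian motions and the same initial conditions as $Y$, and let $\tilde\pi^k$ denote its $k$-marginals. By Theorem \ref{th: short time} and Remark \ref{rk: time dependent setting-intro}, $\ent(\tilde\pi^k[T]\,\|\,\mu^{\otimes k}[T])=O(k^2/n^2)$.

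Next I convert this to $\W_1$. Corollary \ref{cr: metric convergences short time} already gives $\W_2^2(\tilde\pi^k,\mu^{\otimes k})=O(k^2/n^2)$ on path space, hence the same for $\W_1^2$. By the triangle inequality it then suffices to show
\[
\W_1^2(\pi^k,\tilde\pi^k)=O(k^2/n^2).
\]

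For this comparison I use the synchronous coupling $Y$ versus $\tilde Y$, together with the now-standard estimate relating the Nash system to the master equation (as in \cite{MasterEquation}). Set $v^{n,i}(t,x):=U_t(m^n_x,x^i)$. Using the smoothness of $U$, one checks that $(v^{n,i})$ solves the Nash system \eqref{eq: Nash system} up to an error $r^{n,i}$ with $\|r^{n,i}\|_\infty=O(1/n)$. The error comes from the self-interaction term, which is of order $\frac1n\nabla_y\wgrad U$, and from replacing $\nabla_{x^i}v^{n,i}$ by $\nabla_xU$, which differs by $\frac1n\wgrad U(m,x^i,x^i)$. The classical argument (It\^o's formula for $u^{n,i}-v^{n,i}$ along $Y$, plus Gronwall) then yields
\[
\E\Big[\int_0^T|\nabla_{x^i}u^{n,i}-\nabla_{x^i}v^{n,i}|^2(t,Y_t)\,dt\Big]=O(1/n^2)
\]
for each $i$. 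Since $\nabla_y H$ is Lipschitz and $V_t$ is Lipschitz in $(\nu,x)$, the coupling gives
\[
\E\sup_{t\le T}|Y^i_t-\tilde Y^i_t|\lesssim \frac1n+\E\Big[\int_0^T\big(|\nabla_{x^i}u^{n,i}-\nabla_{x^i}v^{n,i}|(t,Y_t)+\W_1(m^n_{Y_t},m^n_{\tilde Y_t})\Big]\,dt.
\]
Using $\W_1(m^n_{Y},m^n_{\tilde Y})\le\frac1n\sum_j|Y^j-\tilde Y^j|$, exchangeability and Gronwall, this gives $\E\sup_t|Y^i_t-\tilde Y^i_t|=O(1/n)$. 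Summing over $i\le k$ produces $\W_1(\pi^k,\tilde\pi^k)\le\sum_{i\le k}\E\sup_t|Y^i-\tilde Y^i|=O(k/n)$, which is the required bound.

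The main obstacle is the Nash-system comparison: obtaining the per-player $O(1/n)$ bound in $L^2$ (rather than only the averaged bound) under the stated assumptions (1)--(3). The growth conditions in (1)--(2) are what justify the It\^o/martingale manipulations with unbounded $u^{n,i}$. If the direct per-player estimate proves delicate, I would first establish the averaged bound $\frac1n\sum_i\E\int|\nabla_{x^i}u^{n,i}-\nabla_{x^i}v^{n,i}|^2=O(1/n^2)$ and then use exchangeability of the equilibrium to transfer it to each individual player.
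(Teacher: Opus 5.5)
Your proposal is correct and follows essentially the same route as the paper. Both arguments introduce the intermediate particle system driven by $V_t(\nu,x)=-\nabla_yH(x,\nabla_xU_t(\nu,x))$ and apply Theorem~\ref{th: short time} (via Remark~\ref{rk: time dependent setting-intro}) and Corollary~\ref{cr: metric convergences short time} to it, then combine the per-particle bound $\E[\sup_{t\le T}|Y^1_t-Z^1_t|]\le C/n$ with exchangeability and the triangle inequality; the only difference is that you sketch that Nash-system-versus-master-equation comparison yourself, whereas the paper cites it from the literature (e.g.\ \cite{MasterEquation}).
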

\begin{proof}
The proof combines Theorem \ref{th: short time} with a known estimate from the mean field game literature.
Note that Theorem \ref{th: short time} does not apply directly, because the drift of $Y^i$ in \eqref{eq: trajectories in Nash Equilibrium} varies with $n$ due to the dependence on $n$ of $u^{n,i}$.
We follow the idea of \cite{MasterEquation} of introducing an intermediate process $(Z^i)_{i=1}^n$ defined by
\begin{align} 
    \label{eq: intermediate problem MFGs}
    dZ_t^i&=-\nabla_y H(Z_t^i,\nabla_x U(t,m_{t}^n,Z_t^i))\,dt+\sqrt 2 \sigma dB_t^i, \qquad      m_t^n= \frac 1n \sum_{j=1}^n \delta_{Z_t^j}. 
\end{align}
This system, unlike \eqref{eq: trajectories in Nash Equilibrium}, does fit into the setting of our Theorem \ref{th: short time}, by setting
\[
V(t,\nu,x) = -\nabla_y H(x,\nabla_x U(t,\nu,x)).
\]
(See Remark \ref{rk: time dependent setting-intro} regarding time dependence.) Thanks to the assumed regularity of $U$ and $H$, we can apply our Corollary \ref{cr: metric convergences short time} to obtain a sharp estimate on the propagation of chaos,
\begin{equation}
\W_1\big(\L(Z^1,\ldots,Z^k), \, \mu^{\otimes k}\big) = O(k/n). \label{MFGapplication1}
\end{equation}
This was the piece missing in prior work on the mean field game convergence problem.
It is known from prior work that
\[
\E\bigg[\sup_{0 \le t \le T}|Y^1_t-Z^1_t|\bigg] \le C/n,
\]
for some constant $C$ independent of $n$.
This was shown in \cite[Theorem 6.6]{MasterEquation} under smoothness assumptions for the master equation implied by ours, when the state space is the torus, and we refer to \cite[Theorem 6.32]{CDMFGbook2} or \cite[Theorem 4.2]{DelarueLackerRamananCLT} for analogous estimates on the state space $\R^d$. 
Using exchangeability, this implies
\begin{equation*}
\W_1\big(\L(Z^1,\ldots,Z^k), \, \pi^k\big) \le  \E\bigg[\sup_{0 \le t \le T}\Big(\sum_{i=1}^k \big|Y_t^i-Z_t^i\big|^2\Big)^{\frac 12}\bigg]\leq k \E\Big[\sup_{0 \le t \le T}\big|Y_t^1-Z_t^1\big|\Big] \leq  \frac{Ck}{n}. 
\end{equation*}
Combine this with \eqref{MFGapplication1} and the triangle inequality to complete the proof.
\end{proof}

Note that Theorem \ref{th: chaos for MFG} requires the solution of the master equation $U$ to be very regular. Regularity of the (first order) master equation is well studied, for instance in \cite{DelarueClassical}, \cite{PorettaRegularityME}, \cite{MasterEquation}, and \cite[Section 5]{CDMFGbook2}. In particular the latter three references provide conditions under which the master equation has two bounded Wasserstein derivatives. The following lemma gives some sufficient conditions for $U$ to satisfy the assumption of Theorem \ref{th: chaos for MFG} on short time horizons.  We expect the result to be valid for arbitrary time horizon under a monotonicity condition, such as that of Lasry-Lions \cite{lasry2007mean} or displacement monotonicity \cite{gangbo2022mean}, but we do not pursue this here.

\begin{lemma}
\label{lm: smoothness of MasterEq}
Assume that
    \begin{enumerate}
        \item $\nabla_x f_0,\nabla_x g$ are of class $\mathcal C_{\textup{bd}}^{6}(\mathcal P_2(\R^d)\times \R^d)$;
        \item $\nabla_a^2 f_1(x,a)\geq I\lambda$ for $\lambda>0$;
        \item $\nabla_x f_1,\nabla_y H$ are six times continuously differentiable, with derivatives of order 1--6 being bounded.
    \end{enumerate}
    Then, for $T$ sufficiently small, $U$ satisfies assumption (1) of Theorem \ref{th: chaos for MFG} on $[0,T]$.
\end{lemma}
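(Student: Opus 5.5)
We use the classical representation of the master equation solution through the forward--backward system of McKean--Vlasov type (the stochastic Pontryagin / FBSDE approach of \cite[Chapter 5]{CDMFGbook2} and \cite{Chassagneux2022-jt}). Fix $(t_0,\xi)$ with $\Law(\xi)=\nu$ and consider
\begin{align*}
dX_t &= -\nabla_y H(X_t,Y_t)\,dt+\sqrt2\sigma\,dB_t, \qquad X_{t_0}=\xi,\\
dY_t &= -\nabla_x\big(f_0(\Law(X_t),X_t)-H(X_t,Y_t)\big)\,dt+Z_t\,dB_t, \qquad Y_T=\nabla_x g(\Law(X_T),X_T),
\end{align*}
together with its ``decoupled'' version started at a generic point $x$ with the flow $\Law(X_t)$ frozen. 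The decoupling field of the latter is $\nabla_x U(t_0,\nu,x)$. Assumption (2) ($\nabla_a^2 f_1\ge \lambda I$) makes $H$ smooth in $y$ with $\nabla_y H$ Lipschitz, and by (3) all coefficients $\nabla_y H$, $\nabla_x H$ (expressible through $\nabla_x f_1$ at the optimizer $a^*(x,y)$, itself smooth by the implicit function theorem and (2)--(3)), $\nabla_x f_0$, $\nabla_x g$ have bounded derivatives of orders $1$ through $6$, including Wasserstein derivatives for $\nabla_x f_0$, $\nabla_x g$ by (1).

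\textbf{Step 1.} Show that for $T$ small, the system is well posed with Lipschitz decoupling field. This follows from a contraction argument on $[t_0,T]$, which is standard since all coefficients are Lipschitz and the time horizon is short; no monotonicity is needed. It gives $\nabla_x U$ bounded in its first derivatives and Lipschitz in $(\nu,x)$.

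\textbf{Step 2.} Differentiate the FBSDE repeatedly, following the scheme of \cite{Buckdahn2017-op} and \cite[Section 5]{CDMFGbook2}.
\begin{enumerate}
\item Differentiating in $x$ yields linear FBSDEs for $\nabla_x X,\nabla_x Y$.
\item Differentiating in the measure, by lifting $\xi$ and taking Fr\'echet derivatives in $L^2$, yields linear McKean--Vlasov FBSDEs whose solutions represent $\wgrad\nabla_xU$, with an extra variable $y$.
\item Iterating gives, for each multi-index with $\mathsf w+|\boldsymbol i|\le 6$, a linear FBSDE whose coefficients are derivatives of the data up to order $6$, plus source terms that are polynomial in lower-order derivatives.
\end{enumerate}
For $T$ small, each such linear system is again uniquely solvable by contraction, with $L^p$ bounds by induction on the order. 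This gives boundedness of $D^{\mathsf w,\boldsymbol i}\nabla_xU$. Lipschitz continuity in $(\nu,x,\boldsymbol y)$ follows by estimating differences of two solutions of the same linear system, which is why derivatives up to order $6$ of the data (one more than needed for existence at order $5$, with boundedness giving Lipschitz at order $6$) appear. Finally, $U$ itself and the master equation \eqref{eq: Master Equation} are recovered by the standard verification argument (It\^o's formula along measure flows).

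\textbf{Main obstacle.} The main difficulty is the bookkeeping at high order. Mixed derivatives $\nabla_{y_j}^{c_j}$ in the new variables require differentiating the frozen-flow processes started at $y_j$. The nonlinear terms (products of lower-order derivatives) must be bounded in $L^p$ for large $p$, which needs moment bounds uniform in the starting points. The smallness of $T$ must be chosen uniformly over all $\le 6$ levels; since the contraction constant at each level depends only on the first-order Lipschitz constants of the coefficients, one $T$ suffices. Throughout, I would rely on the existing second-order results of \cite{CDMFGbook2}/\cite{Chassagneux2022-jt} and argue that the induction extends verbatim.
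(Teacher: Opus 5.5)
Your proposal follows essentially the same route as the paper, whose proof is omitted but sketched as a continuation of \cite[Theorem 5.49]{CDMFGbook2}: represent $\nabla_x U$ as the decoupling field of the forward--backward system, differentiate it formally in the space and measure variables, check that the resulting linear (McKean--Vlasov) systems are uniquely solvable on a short horizon, and induct on the order as \cite{Chassagneux2022-jt} does for forward SDEs. One bookkeeping caveat: your claim of ``boundedness giving Lipschitz at order $6$'' is off by one, since estimating differences of solutions of the order-$6$ linear system gives Lipschitz continuity of the sixth derivatives of $\nabla_x U$ only if the sixth derivatives of $\nabla_y H$ and $\nabla_x f_1$ are themselves Lipschitz, and hypothesis (3) provides only their boundedness (the boundedness of all derivatives of order $1$--$6$ displayed in (a) is unaffected).
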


We omit the proof, as it is essentially a long continuation of the proof of \cite[Theorem 5.49]{CDMFGbook2}. The idea is to consider the BSDE representation of $\nabla_x U$ given by the Master field (\cite[Section 4]{CDMFGbook2}, which allows to prove regularity for $\nabla_x U$ by proving regularity for the driving BSDE system. This is done by differentiating the BSDE informally, checking that the candidate solution is uniquely solvable, and taking limits. This program is carried out in detail in \cite{DelarueClassical} and \cite[Section 5]{CDMFGbook2} for the first two derivatives; higher order derivatives work in a similar way, as shown in the case of forward SDEs by \cite{Chassagneux2022-jt}.

\subsection{Application: Mean Field Control}
In this section we consider the convergence problem for mean field control, which can be seen as a variant of  mean field games in which players cooperate instead of competing. The mathematical analysis is considerably simpler, as optimization is simpler than Nash equilibrium, and as a result much more is known.

Consider cost functions $F,G:\mathcal P_2(\R^d)\to \R$ and $L:\R^d\times \R^d\to \R$. Consider the $n$-particle cooperative control problem
\begin{align}
\label{eq: n player control}
\begin{split}
    \min_{\alpha}\,\,&\E\bigg[\frac 1n\sum_{i=1}^n\int_0^T L(Y_t^i,\alpha_t^i)+ F(m_{Y_t}^n)\,dt+ G(m_{Y_T}^n)\bigg],\text{ where for } x \in (\R^d)^n, m^n_x := \frac{1}{n}\sum_{j=1}^n \delta_{x^j}, \\
    \text{s.t.\,\,} dY_t^i&= \alpha_t^i\,dt+\sqrt 2 \sigma\, dB_t^i \qquad Y_0^i \sim \text{ iid }  \mu_0 , \ \   i=1,\dots,n.
    \end{split}
\end{align}
Classical results yield that under regularity assumptions on $F,G$ and $L$, there exist optimal feedback controls which can be expressed in terms of the value function $v(t,x)$ of the control problem and the Hamiltonian
\begin{equation*}
    H(x,y) = -\inf_{a \in \R^d}\{a\cdot y+L(x,a)\}.
\end{equation*}
where we recall that the value function $v : [0,T] \times (\R^{d})^n \to \R$ solves an $n$-dimensional HJB equation
\begin{align*}
    \partial_t v_t(x)&+\sigma^2\Delta v_t(x)+F(m_{x}^n)-\frac 1n\sum_{i=1}^n H(x^i,n\nabla_{x_i}v_t(x))=0\\
    v_T(x)&=G(m_x^n)
\end{align*}
The optimal trajectories are given by the SDE
\begin{align}
\label{eq: n-control optimal trajectories}
    dY_t^i=-\nabla_y H(Y_t^i,n\nabla_i v_t(Y_t))\,dt+\sqrt 2 \sigma\,dB_t^i
\end{align}
Viewing each $Y^i$ as a random element of $C([0,T];\R^d)$, let $\pi^k=\Law(Y^1,\dots,Y^k)$ for $1 \le k \le n$.

The large $n$ limit of the cooperative problem is described by the mean field control problem,
\begin{align}
\label{eq: mean field control}
   \begin{split} 
    \min_{\alpha} \ \ &\int_0^T\E[L(X_t,\alpha_t)]+ F(\L(X_t))\,dt+ G(\L(X_T)) \\
    \text{s.t.\,\,} dX_t&= \alpha_t\,dt+\sqrt 2 \sigma \, dB_t, \quad X_0 \sim \mu_0.
\end{split}
\end{align}
Under appropriate smoothness assumption on $F$, $G$, and $L$, the optimal trajectory is given by
\begin{align}
\label{eq: MFC optimal trajectory}
\begin{split}
dX_t&=-\nabla_y H(X_t,\wgrad U_t(\mu_t,X_t))\,dt+\sqrt 2 \, \sigma dB_t, \qquad \mu=\Law(X),
\end{split}
\end{align}
where $U : [0,T] \times \mathcal{P}_2(\R^d) \to \R$ is a suitably defined value function of the problem, which solves the infinite dimensional HJB equation
\begin{align*}
    \partial_t U_t(\nu)&+F(\nu)+\int_{\R^d}\Big(\sigma^2\tr\big( \nabla_y \wgrad U_t(\nu,y)\big) - H(y,\wgrad U_t(\nu,y))\Big)d\nu(y)=0\\
    U_T(\nu)&=G(\nu).
\end{align*}
The convergence problem is again about relating the finite-$n$ control problem to the mean field problem. Qualitative results are known in general situations similar to mean field games \cite{lacker2017limit,djete2022mckean}, but the quantitative picture of mean field control is now understood in much finer detail due to the sequence of papers \cite{cardaliaguet2023algebraic,cardaliaguet2023regularity,DAUDIN2024110660,cardaliaguet2023sharpconvergenceratesmean}. These papers, especially  \cite{DAUDIN2024110660}, clarify how convergence rate of the value functions depends on the smoothness of the cost functions. As with mean field games, these sharp convergence rates for the value functions do not translate directly to sharp convergence rates (propagation of chaos) for the state processes.

We address this gap in the literature in the nicest regime where the master equation is smooth. 
\begin{theorem}
\label{th: chaos for control}
    Assume that
    \begin{enumerate}
    \item The functions $F,L,G$ are measurable and subpolynomial in the following sense: there exists $C\geq 0$ such that for each $x,a \in \R^d, \nu \in \mathcal P_2(\R^d)$,
    \begin{equation*}
        |F(\nu)|+|L(x,a)|+|G(\nu)|\leq C\Big(1+|x|^2+|a^2|+\int |y|^2\,d\nu(y)\Big).
    \end{equation*}
         \item There exists a classical solution $v$ to the $n$-HJB such that the optimal state process given by $\eqref{eq: n-control optimal trajectories}$ is well posed.
         \item  There exists a classical solution $U$ to the master equation such that the optimal state given by \eqref{eq: MFC optimal trajectory} is well posed. Moreover $\wgrad U,\nabla_x \wgrad U$ are Lipschitz uniformly in $t$.
     \end{enumerate}
    Assume further that
    \begin{enumerate}[(a)]
        \item The function $\wgrad U$ belongs to $\mathcal C_{bd}^{6}(\mathcal P_2(\R^d)\times \R^d)$ uniformly in time:
        \begin{align*}
            \sup_{t\leq T}\sup_{0<\mathsf w +|\boldsymbol{i}|\leq 6}|D^{\mathsf w,\boldsymbol{i}}\wgrad U_t|\leq C<\infty.
        \end{align*}
        \item The initial condition $\mu_0$ satisfies the $T_1$ inequality of assumption \ref{assumption on Phi}(2).
        \item The function $(x,y)\mapsto \nabla_y H(x,y)$ has 6 bounded derivatives.
    \end{enumerate}
    Then 
\begin{align*}
    \sup_{t \in [0,T]}\W^2_1(\mu_t^{\otimes k},\pi_t^k) = O(k^2/n^2).
\end{align*}
\end{theorem}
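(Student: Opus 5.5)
The plan mirrors the proof of Theorem \ref{th: chaos for MFG}: compare the $n$-player optimal trajectories \eqref{eq: n-control optimal trajectories} with an intermediate particle system driven by the mean field feedback, and invoke our propagation of chaos result for that system. Concretely, define $(Z^i)_{i=1}^n$ by
\begin{equation*}
dZ_t^i=-\nabla_y H\big(Z_t^i,\wgrad U_t(m_t^n,Z_t^i)\big)\,dt+\sqrt2\sigma\,dB_t^i,\qquad m_t^n=\frac1n\sum_{j=1}^n\delta_{Z_t^j},\qquad Z_0^i=Y_0^i,
\end{equation*}
driven by the same Brownian motions as $Y$. This system has the form \eqref{eq: particle SDE} with the time-dependent drift $V(t,\nu,x)=-\nabla_y H(x,\wgrad U_t(\nu,x))$. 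Its McKean--Vlasov limit is exactly \eqref{eq: MFC optimal trajectory}.

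\textbf{Step 1: regularity of the intermediate drift.} Check that $V(t,\cdot,\cdot)\in\mathcal C^6_{\textup{bd}}(\mathcal P_2(\R^d)\times\R^d)$ uniformly in $t$. This follows from a chain rule (Fa\`a di Bruno) for the multi-index derivatives $D^{\mathsf w,\boldsymbol i}$ applied to a smooth function composed with $(x,\wgrad U_t(\nu,x))$. Assumptions (a) and (c) then give boundedness and Lipschitz continuity of all derivatives of orders $1$ through $6$. Every term in the expansion is a product of derivatives of $\nabla_yH$ of order at least one (bounded) and derivatives of $\wgrad U_t$ of order at least one (bounded), together possibly with a first-order term. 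For the latter, $\nabla_x\wgrad U$ is bounded by (a). With assumption (b) and Remark \ref{rk: time dependent setting-intro}, Corollary \ref{cr: metric convergences short time} applies on $[0,T]$. It gives $\W_2^2(\Law(Z^1,\dots,Z^k)[T],\mu^{\otimes k}[T])=O(k^2/n^2)$, hence the same bound for $\W_1^2$ at each time marginal, uniformly in $t\le T$ (marginalizing a path coupling).

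\textbf{Step 2: comparison $Y$ versus $Z$.} I would show $\E[\sup_{t\le T}|Y^1_t-Z^1_t|]\le C/n$. The natural route is the known finite-dimensional projection argument for mean field control, e.g.\ from \cite{cardaliaguet2023regularity,DAUDIN2024110660}. Set $u^n_t(x)=U_t(m^n_x)$. Then $n\nabla_{x_i}u^n_t(x)=\wgrad U_t(m^n_x,x_i)$, and $u^n$ solves the $n$-HJB up to an error of order $1/n$, coming from the diagonal term $\frac1n\tr\nabla_y\wgrad$ and the $\wgrad^2 U$ contributions in $\sigma^2\Delta u^n$. A verification/comparison argument, or the semiconcavity-based gradient estimates of those papers, then yields $\|n\nabla_{x_i}v_t-n\nabla_{x_i}u^n_t\|=O(1/n)$ along the optimal trajectory in a suitable averaged $L^2$ sense. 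Finally, Gr\"onwall with the Lipschitz bound on $\nabla_yH$ and on $\wgrad U$ (assumption (3)) gives the $O(1/n)$ pathwise estimate. If a ready-made gradient estimate is unavailable, I would instead linearize the HJB along $Y$ via the stochastic maximum principle: the FBSDE for $(Y,n\nabla v(Y))$ and the one for $(Z,\wgrad U(m^n,Z))$ differ by a source of size $O(1/n)$, and FBSDE stability with Lipschitz coefficients yields the claim on $[0,T]$.

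\textbf{Step 3: conclusion.} By exchangeability,
\[
\W_1\big(\Law(Y^1_t,\dots,Y^k_t),\Law(Z^1_t,\dots,Z^k_t)\big)\le k\,\E|Y^1_t-Z^1_t|\le Ck/n,
\]
and the triangle inequality with Step 1 gives $\sup_t\W_1^2(\pi^k_t,\mu_t^{\otimes k})=O(k^2/n^2)$. The main obstacle is Step 2: obtaining the sharp $1/n$ (rather than $1/\sqrt n$) gradient comparison between $v$ and the projected master equation. It is there that the bounded second Wasserstein derivative of $U$ and the Lipschitz hypotheses in (3) are essential.
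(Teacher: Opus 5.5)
Your architecture matches the paper's: the intermediate system with drift $-\nabla_yH(x,\wgrad U_t(\nu,x))$, Corollary~\ref{cr: metric convergences short time} applied to it (via Remark~\ref{rk: time dependent setting-intro}), a comparison with the $n$-particle optimal paths through $u^n_t(x)=U_t(m^n_x)$, and finally exchangeability and the triangle inequality. Steps 1 and 3 are fine. The gap is Step~2, which is the only genuinely new estimate and the entire content of Lemma~\ref{lm: Master equation estimate control}. You do not actually prove it, and the routes you list do not give the sharp rate.

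\begin{itemize}
\item \emph{Comparison.} It gives $\|v-u^n\|_\infty=O(1/n)$, which says nothing about gradients.
\item \emph{Verification.} The identity for the suboptimality of the $u^n$-feedback gives at best $\frac1n\sum_i\E\int_0^T|n\nabla_{x_i}v-n\nabla_{x_i}u^n|^2\,dt=O(1/n)$. That is an $O(n^{-1/2})$ error in the controls, which only reproduces the known non-sharp bound $\W_1^2=O(k/n)$.
\item \emph{FBSDE stability.} This is closer, but ``FBSDE stability with Lipschitz coefficients on $[0,T]$'' is not a valid general principle. Lipschitz FBSDEs can fail to be stable, or even solvable, on long horizons, and here you also need constants independent of the dimension $nd$. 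Stability does hold in this problem, but the reason is that one of the two systems has a decoupling field with explicit $n$-uniform derivative bounds. Your proposal never identifies or uses this.
\end{itemize}

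That is the missing idea, and the paper's argument runs as follows.
\begin{enumerate}
\item Differentiate the projected equation. Set $\mathcal U^i:=\nabla_{x_i}u^n=\frac1n\wgrad U_t(m^n_x,x^i)$. It satisfies the $x_i$-differentiated $n$-HJB with source $q^i=\nabla_{x_i}r$, where $r=\frac{\sigma^2}{n^2}\sum_j\tr\wgrad^2U_t(m^n_x,x^j,x^j)=O(1/n)$ is the HJB error.
\item Differentiating in a single coordinate gains a factor $1/n$, so $|q^i|\le C/n^2$. This uses bounded $\nabla_y\wgrad^2U$ and $\wgrad^3U$ from (a). Likewise $|\nabla_{x_j}\mathcal U^i|\le C/n^2$ for $j\ne i$ and $|\nabla_{x_i}\mathcal U^i|\le C/n$.
\item Apply It\^o's formula to $\mathcal V^i_t-\mathcal U^i_t$, with $\mathcal V^i:=\nabla_{x_i}v$, along the true optimal path $Y$. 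Do not do this along the intermediate system, since nothing $n$-uniform is known about $\nabla^2v$.
\item The mismatch of transport terms is $\sum_j\nabla_{x_j}\mathcal U^i\,[\nabla_yH(Y^j,n\mathcal U^j)-\nabla_yH(Y^j,n\mathcal V^j)]$. The derivative bounds above, the Lipschitz property of $\nabla_yH$, Cauchy--Schwarz and exchangeability bound it by $C\,\E|\mathcal U^1-\mathcal V^1|^2$.
\item A backward Gr\"onwall argument then gives $\E|\mathcal V^i_s(Y_s)-\mathcal U^i_s(Y_s)|^2\le C\int_s^T\E|q^i_t|^2\,dt\le C/n^4$. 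Equivalently, $n\nabla_{x_i}v-\wgrad U_t(m^n_{Y},Y^i)$ has an $O(1/n)$ $L^2$-error along $Y$.
\end{enumerate}
Only at this point does your synchronous-coupling Gr\"onwall step close. Using $\E\,\W_2^2(m^n_{Y_t},m^n_{Z_t})\le\E|Y^1_t-Z^1_t|^2$, it gives $\E|Y^1_t-Z^1_t|^2\le C/n^2$, and the theorem follows from your Step~3.
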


We give the proof in Appendix \ref{appendix: control}, and it closely parallels the argument for Theorem \ref{th: chaos for MFG}. The assumed smoothness of the value function $U$ can be checked along similar lines to Lemma \ref{lm: smoothness of MasterEq}, but more easily so we omit the details.

\subsection{Related literature}
The closest result to  our Theorem \ref{th: short time} in prior literature, for non-pairwise $V$ (and not uniform in time), is in previous work of the second author \cite[Theorem 2.17, Corollary 2.18]{HierarchiesPaper}. There it was shown that  $\ent(\pi^k[t]\,\|\,\mu^{\otimes k}[t])) = O(k^{3-\epsilon}/n^{2-\epsilon})$ for arbitrary $\epsilon > 0$. This was not so satisfactory, both due to the worse exponent on $n$ and especially $k$, and more importantly due to restrictive assumptions on $V$. Indeed, there it was required that $V$ admits a power series expansion around $\mu_t$,
\[
V(\nu,x)= V(\mu_t,x)+\sum_{p=1}^\infty \frac{1}{p!}\int_{(\R^d)^p} \delta_m^p  V(\mu_t,x,\boldsymbol{y})d(\nu-\mu_t)^{\otimes p}(\boldsymbol{y})
\]
with a strong decay condition on the flat derivatives:  $\sum_{p=1}^\infty \|\delta_m^p  V\|_\infty e^{cp}/p! < \infty$ for all $c > 0$. This already excludes functions like $ V(\mu,x)=f(\int y\,d\mu(y))$ for smooth $f$ of compact support, which have unbounded flat derivatives but bounded Wasserstein derivatives. And it is prohibitively difficult to check in the setting of mean field games and control described above.

Let us highlight also the result of  \cite[Theorem 4.2]{Szpruch-Tse-sampling}, which also studied non-pairwise interactions $V$, giving a (global) bound on the rate of convergence of the empirical measure. For $\overline{Y}^i\sim\mu$ denoting iid solutions of the McKean-Vlasov equation, coupled with $(Y^1,\ldots,Y^n)$ synchronously (i.e., using the same Brownian motions and initial conditions, they show that
\begin{equation}
\E\bigg[ \frac{1}{n}\sum_{i=1}^n\sup_{0 \le t \le T}|Y^i_t-\overline{Y}^i_t|^2 \bigg] = O(1/n). \label{intro:szpruchtse}
\end{equation}
In the case of pairwise interaction, where $V$ is given as in \ref{intro:pairwise} with $\phi$ Lipschitz, this bound was already implicit in \cite{snitzmann}. And it was already well known that a suboptimal (dimension-dependent) bound can be shown for non-pairwise $V$ under a $\W_2$-Lipschitz assumption by using the same synchronous coupling approach \cite[Theorem 1.20]{carmona2016lectures}.
The contribution of \cite[Theorem 4.2]{Szpruch-Tse-sampling} was to show how smoothness of $V$ leads to the sharper bound in \eqref{intro:szpruchtse} even in the non-pairwise case.
By exchangeability, the global bound \eqref{intro:szpruchtse} implies the local bound
\begin{equation}
    \W_2^2(\mu^{\otimes k}_t,\pi_t^k)=O(k/n). \label{intro:szpruchtse2}
\end{equation}
The main assumption in \cite[Theorem 4.2]{Szpruch-Tse-sampling} is that $ V$ has 3 bounded Wasserstein derivatives. Our Theorem \ref{th: short time} improves \eqref{intro:szpruchtse2} to $O(k^2/n^2)$, at the price of requiring more smoothness of $V$. We employ similar methods of \emph{weak propagation of chaos} as in \cite{Szpruch-Tse-sampling}, but implemented in a new way within the BBGKY approach of \cite{HierarchiesPaper}.

Lastly, it is worth mentioning that relative entropy and related modulated energy methods have been popular recently not only due to their ability to obtain sharp rates of propagation of chaos, but also (arguably more so) because they are able to handle singular (pairwise) interactions which arise in many physically relevant models. The work of Jabin and Z.\ Wang \cite{JabinWangBounded,JWEntropicChaos} was particularly influential in this direction, and S.\ Wang recently showed in \cite{wang2024sharplocalpropagationchaos} how to combine the ideas of \cite{JWEntropicChaos} with the BBGKY approach of \cite{HierarchiesPaper} to achieve sharp rates for certain singular interactions. For additional references and discussion of the active line of research on mean field limits for singular interactions, especially for Coulomb and Riesz gases, we refer to the recent survey \cite{serfaty2024lectures}.
Quantitative propagation of chaos for models without noise is not as well understood; sharp rates are known  in weaker metrics \cite{Paul2019SizeOfChaos,Duerinckx2021SizeOfChaos}.
For further references and methods in the vast literature on propagation of chaos, we refer to the thorough review  articles \cite{ChaintronChaos1,ChaintronChaos2}.

\subsection{Outline} In Section \ref{sect: entropic inequality} we prove via the BBGKY hierarchy a version of the marginal entropy estimates of \cite{HierarchiesPaper,SharpChaosLacker2023}, both in the short and long time regimes, generalized to the setting of non-pairwise interactions. Compared to prior work, a new remainder term appears here, and its analysis is the subject of Sections \ref{sect: weak chaos} and \ref{sect: remainder}. Section \ref{sect: weak chaos}  presents some general tools of weak propagation of chaos borrowed from \cite{Chassagneux2022-jt}, and in Section \ref{sect: remainder} we use them to prove the main estimate of the remainder term. Three appendix sections are devoted to justifying claims from the applications to mean field Langevin dynamics, games, and control.

\section{Entropy inequality via BBGKY hierarchy}
\label{sect: entropic inequality}
In this section we establish the differential inequality \eqref{eq: differential inequality} and prove Theorems \ref{th: short time} and \ref{th: UiT}, pending estimates from Section \ref{sect: remainder}. Assumption \ref{assumption on Phi} is in force at all times. We work with a filtered probability space $(\Omega,\mathcal A,\mathcal F,\P)$ that supports an $\mathcal F$-Brownian motion $B_t$, and assume that the process $Y_t=(Y_t^1,\dots,Y_t^n)$ is the unique strong solution of \eqref{eq: particle SDE}. We will use the letter $C$ to denote a constant that may change from line to line and that is allowed to depend on $\sigma,C_{\text{T}_1}(0),\mu_0$ and the derivative bounds on $V$, but crucially not on $k$ or $n$.

\subsection{Hierarchies of linear differential inequalities}

As a first preparation, we present a Gronwall-type lemma for a type of linear systems of differential inequality that arises in BBGKY-approaches to entropic propagation of chaos. Initially, in \cite{HierarchiesPaper}, it was analyzed by a direct iteration and somewhat tedious manipulations of iterated integrals. A simpler inductive argument is given in \cite{Hess-Childs2025-ux}. Here we give a short proof following the strategy of \cite[Section 1.4]{lacker2024quantitativepropagationchaosnonexchangeable}.

Consider functions $R=R_t:[0,T] \to \R_+$ and $f=f_t(k) : [0,T] \times [n] \to \R_+$, and scalars $a,b,c\geq 0, p\geq 1$. We will deal with the following system of differential inequalities: For each $1 \le k \le n$, 
\begin{equation}
\label{eq: linear differential inequality}
    \frac{d}{dt}f_t(k)\leq ak(f_t(k+1)-f_t(k))1_{k < n}  + \frac{b}{n^2}k^p + kR_t - cf_t(k).
\end{equation}
Hierarchies of this type are a cornerstone of sharp approaches to propagation of chaos. 

\begin{lemma}
  \label{lm: linear differential inequality}
  Assume that $f:[0,T]\times [n]\to \R_+$ is a non-decreasing function in $k$ that satisfies \eqref{eq: linear differential inequality} for $p\in\{2,3\}$. Assume that there exists a constant $C_0\geq 0$ such that 
  \begin{align}
  \label{eq: yule process initial condition}
      f_s(k)\leq C_0\frac{k^2}{n^2}, \quad 1 \le k \le n.
  \end{align}
  Then, for $0 \le s \le T$ and $1 \le k \le n$
    \begin{equation*}
        f_T(k)\leq 2C_0\frac{k^2}{n^2} e^{(2a-c)(T-s)}+\frac{8bk^p}{n^2}\int_s^T e^{(ap-c)(t-s)} \,dt + k\int_s^T e^{(a-c)(t-s)} R_t \,dt.
    \end{equation*}
\end{lemma}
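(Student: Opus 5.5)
We follow the strategy of \cite[Section 1.4]{lacker2024quantitativepropagationchaosnonexchangeable} and compare $f$ with a positive linear evolution, namely a pure-birth (Yule-type) process. First remove the damping by setting $g_t(k)=e^{c(t-s)}f_t(k)$. Then
\[
\tfrac{d}{dt}g_t(k)\le ak\big(g_t(k+1)-g_t(k)\big)1_{k<n}+e^{c(t-s)}\big(bk^p/n^2+kR_t\big).
\]
Write $A$ for the operator $(A h)(k)=ak(h(k+1)-h(k))1_{k<n}$. This is the generator of a Markov chain $(K_t)$ on $[n]$ that jumps from $k$ to $k+1$ at rate $ak$ and is absorbed at $n$; it is a truncated Yule process. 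The semigroup $P_t=e^{tA}$ is positive, since $A$ has nonnegative off-diagonal entries.

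\textbf{Step 1: comparison.} For fixed $T$, consider $t\mapsto P_{T-t}g_t$ on $[s,T]$. Its derivative is $P_{T-t}(\partial_t g_t - Ag_t)$, and by the inequality above and positivity of $P$,
\[
g_T(k)\le (P_{T-s}g_s)(k)+\int_s^T e^{c(t-s)}\big(P_{T-t}[b\,\cdot^p/n^2+\cdot\,R_t]\big)(k)\,dt.
\]
All quantities live on the finite set $[n]$, so this is just a finite-dimensional linear ODE comparison. Monotonicity of $f$ in $k$ is used here to make sure the truncation at $n$ in the hierarchy matches the generator. At $k=n$ the drift term vanishes, which is consistent with absorption.

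\textbf{Step 2: moments of the Yule process.} We need bounds on $\E_k[K_t^q]$ for $q=1,2,3$, where $K_0=k$. For $q=1$, the function $h(k)=k$ satisfies $Ah\le ah$, so $\E_k K_t\le ke^{at}$. For $q\ge 2$, test $h(k)=k(k+1)\cdots(k+q-1)$, the rising factorial. A direct computation gives
\[
ak\big(h(k+1)-h(k)\big)=ak\,q\,(k+1)\cdots(k+q-1)=aq\,h(k),
\]
so truncation only decreases things, $Ah\le aqh$, and $\E_k h(K_t)\le h(k)e^{aqt}$. Since $k\ge1$, we have $h(k)\le C_q k^q$: for $q=2$, $k(k+1)\le 2k^2$, and for $q=3$, $k(k+1)(k+2)\le 6k^3$. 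Combined with $K^q\le h(K)$, this yields
\[
\E_k K_t^2\le 2k^2e^{2at},\qquad \E_kK_t^3\le 6k^3e^{3at}.
\]

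\textbf{Step 3: assemble.} Using $g_s=f_s\le C_0\,\cdot^2/n^2$, the first term is at most $2C_0k^2e^{2a(T-s)}/n^2$. Multiplying back by $e^{-c(T-s)}$ gives $e^{(2a-c)(T-s)}$. The source terms give $b\,C_pk^pe^{ap(T-t)}/n^2$ and $kR_te^{a(T-t)}$, with prefactor $e^{-c(T-t)}$. Since $C_p\le 6\le 8$, the constant $8$ covers this.

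The main obstacle is that the stated bound has kernel $e^{(ap-c)(t-s)}$ rather than $e^{(ap-c)(T-t)}$. This is harmless after the change of variables $t\mapsto T+s-t$, and the integrals are then equal as stated. The most delicate point is Step 1, the positivity and comparison argument, which I would present through Duhamel's formula for $P_{T-t}g_t$.
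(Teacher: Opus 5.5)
Your route is the paper's: a Duhamel comparison against the positive semigroup $e^{t\mathcal G}$ of the Yule process stopped at $n$, followed by moment bounds $e^{t\mathcal G}m_q\le C_qe^{qat}m_q$. Your rising-factorial test functions are a tidier way to get these moments than the paper's recursion, and they give $C_2=2$, $C_3=6$. One small correction: monotonicity of $f$ in $k$ is never actually used. The indicator $1_{k<n}$ is already built into \eqref{eq: linear differential inequality}, and the comparison only needs positivity of $e^{t\mathcal G}$. The genuine problem is your last paragraph.

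What Steps 1--3 actually prove is
\[
f_T(k)\le 2C_0\frac{k^2}{n^2}e^{(2a-c)(T-s)}+\frac{6bk^p}{n^2}\int_s^Te^{(ap-c)(T-t)}\,dt+k\int_s^Te^{(a-c)(T-t)}R_t\,dt .
\]
The substitution $t\mapsto T+s-t$ turns the middle integral into the stated one, but only because its integrand has no other $t$-dependence. In the last integral the substitution also moves the argument of $R$. It yields $k\int_s^Te^{(a-c)(t-s)}R_{T+s-t}\,dt$, not $k\int_s^Te^{(a-c)(t-s)}R_t\,dt$, so your claim that the integrals are equal is false.

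This step cannot be repaired, because the displayed statement is false in the $R$-term. Take $a=b=C_0=s=0$, $c=1$, $R_t=e^t$. Then $f_t(k)=k\sinh t$ satisfies every hypothesis, with equality in the hierarchy, yet $f_T(k)=k\sinh T>kT=k\int_0^Te^{-t}R_t\,dt$.

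The paper's proof has the same slip. In \eqref{eq: semigroup after gronwall} the operator acting on the source at time $t$ should be $e^{-c(T-t)}e^{(T-t)\mathcal G}$; this matters only for the $R_t$ term. The correct lemma is the one you derived, with kernel $e^{(a-c)(T-t)}$ in the $R$-term.

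Nothing downstream is affected:
\begin{itemize}
\item In the finite-horizon application $c=0$, and $e^{a(T-t)}\le e^{aT}$ is absorbed into the constant exactly as before.
\item In the uniform-in-time application $\mathcal R(t)$ is bounded by $\sup_t\mathcal R(t)$, and for a constant $R$ both kernels integrate to the same quantity.
\end{itemize}
You should state and prove the lemma in the corrected form, or with $\sup_t R_t$ in place of $R_t$, rather than claim the change of variables recovers the stated form.
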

\begin{proof}
We will use a semigroup interpretation of \eqref{eq: linear differential inequality} to organize the analysis.
Consider the $n \times n$ matrix $\mathcal G$ which acts on a function $\varphi : [n] \to \R$ by
\[\mathcal G \varphi(k)=ak(\varphi(k+1)-\varphi(k))\mathbf{1}_{k<n}.\]
Each row of this matrix sums to zero (i.e., $\mathcal G$ maps constant functions to zero), and every off-diagonal entry is nonnegative. Hence, $\mathcal G$ is the infinitesimal generator of a continuous-time Markov process on state space $[n]$; in fact, this process goes by the name of the \emph{Yule process} stopped at level $n$, and we denote it by $(\mathcal Y_t)_{t\geq 0}$. The matrix exponential is the semigroup of this process and admits the representation $e^{t\mathcal G}\varphi(k)=\E[\varphi(\mathcal Y_t)\,|\,\mathcal Y_0=k]$. We will not use this stochastic interpretation except to deduce a monotonicity property, namely that $e^{t\mathcal G}\varphi \ge e^{t\mathcal G}\psi$ pointwise whenever $\varphi \ge \psi$ pointwise.
With this notation we can rewrite \eqref{eq: linear differential inequality} as a coordinatewise inequality between vectors,
\begin{equation*}
    \frac{d}{dt}f_t \leq (\mathcal G - cI)f_t + \frac{b}{n^2}m_p + R_tm_1,
\end{equation*}
where $I$ denotes the $n \times n$ identity matrix, and $m_q(k) := k^q$ for $q > 0$.
Using the aforementioned monotonicity, we deduce
\begin{align*}
    \frac{d}{dt}\Big[e^{-ct} e^{t\mathcal G}f_{T-t} \Big] &= e^{-ct} e^{t\mathcal G}\Big[(\mathcal G - cI) f_{T-t}-\frac{d}{ds}f_{s}\big\vert_{s=T-t}\Big] \\
    &\ge - e^{-ct}e^{t\mathcal G}\big(\frac{b}{n^2}m_p + R_tm_1\big).
\end{align*}
Integrate to get the coordinatewise inequality
\begin{align}
\label{eq: semigroup after gronwall}
    f_{T} &\le e^{-c(T-s)}e^{(T-s)\mathcal G} f_s + \int_s^Te^{-c(t-s)}e^{(t-s)\mathcal G}\Big(\frac{b}{n^2} m_p + R_tm_1\Big)\,dt.
\end{align}
Now we estimate $e^{t\mathcal G}m_q$ for $q=1,2,3$. This could be done by using the known fact that the Yule process follows a negative binomial distribution, but we instead present a short self-contained argument. 

\begin{itemize}
\item $q=1$: Use $\mathcal G m_1(k) = ak(m_1(k+1)-m_1(k))=ak$ to get
\begin{align*}
\frac{d}{dt}e^{t\mathcal G}m_1=e^{t\mathcal G}\mathcal Gm_1 = ae^{t\mathcal G}m_1,
\end{align*}
and thus $e^{t\mathcal G}m_1=e^{at}m_1$.
\item $q=2$: We have $\mathcal G m_2(k) = ak\big((k+1)^2-k^2\big) = 2a m_2(k) + am_1(k)$, and thus
\begin{align*}
\frac{d}{dt}e^{t\mathcal G}m_2 &= 2ae^{t\mathcal G}m_2 + a e^{t\mathcal G}m_1.
\end{align*}
Multiply by $e^{-2at}$ and integrate to get
\begin{align*}
e^{t\mathcal G}m_2 &= e^{ 2at}m_2 + ae^{2at}\int_0^t e^{-2as} e^{s\mathcal G}m_1 \,ds.
\end{align*}
Using $e^{t\mathcal G}m_1=e^{at}m_1$, we get the coordinatewise inequality
\[
e^{t\mathcal G}m_2 = e^{ 2at}m_2 + e^{2at}(1-e^{-at}) m_1 \le 2 e^{2at}m_2.
\]
\item $q=3$: We have $\mathcal G m_3(k) = ak\big((k+1)^3-k^3\big) = 3a m_3(k) + 3am_2(k) + am_1(k)$, and thus
\begin{align*}
\frac{d}{dt}e^{t\mathcal G}m_3 &= 3ae^{t\mathcal G}m_3 + 3a e^{t\mathcal G}m_2 + a e^{t\mathcal G}m_1.
\end{align*}
Multiply by $e^{-3at}$ and integrate to get
\begin{align*}
e^{t\mathcal G}m_3 &= e^{ 3at}m_3 + ae^{3at}\int_0^t e^{-3as} \big(3  e^{s\mathcal G}m_2 +   e^{s\mathcal G}m_1\big) \,ds.
\end{align*}
Use $e^{s\mathcal G}m_1\le e^{2as} m_3$ and $e^{s\mathcal G}m_2 \le 2 e^{2as}m_3$ to get
\begin{align*}
e^{t\mathcal G}m_3 &\le \bigg[e^{ 3at} + 7ae^{3at}\int_0^t e^{-as} \,ds\bigg] m_3 \le 8 e^{3at} m_3.
\end{align*}
\end{itemize}
Using the $q=2$ case and the time-$s$ assumption, we have $e^{(T-s)\mathcal G} f_s(k) \le 2C_0 e^{2a(T-s)}k^2/n^2$.
We can summarize all cases $q=1,2,3$ as $e^{t\mathcal G}m_q \le 8e^{qat}m_q$.
Combining these bounds with \eqref{eq: semigroup after gronwall} yields
\begin{align*}
f_{T}(k) \le 2C_0 e^{(2a-c)(T-s)} \frac{k^2}{n^2}  + \int_s^Te^{-c(t-s)}\Big(\frac{8bk^p}{n^2}e^{pa(t-s)}   + R_tke^{a(t-s)}\Big)\,dt.
\end{align*}
\end{proof}

\subsection{Short time Analysis}
In this section we prove the differential inequality \ref{eq: differential inequality}, and perform the iteration procedure to tackle the pairwise term; we leave the analysis of the remainder for the next sections. First note that Assumption \ref{assumption on Phi}(1) implies that $V$ is $L$-Lipschitz for some $L>0$, in the sense that
\[
|V(\nu,x)-V(\nu',x')| \le L(|x-x'| + \W_2(\nu,\nu')).
\]
We begin with a fairly standard fact about T$_1$ inequalities propagating along Lipschitz SDE solutions, but we include the short proof in Appendix \ref{ap:T1shorttime} as we could not locate a precise reference with explicit constants.

\begin{lemma} \label{lm: T1 short time}
Suppose Assumption \ref{assumption on Phi} holds. Define
\[
C_{\textup{T}_1}(t) := e^{2Lt}\max(2C_{\textup{T}_1}(0),4\sigma^2 t).
\]
Let $0 \le t \le T$. Then $\mu[t]$ satisfies the T$_1$ inequality
\[
\W_1^2(\mu[t],\nu[t])\leq 2C_{\textup{T}_1}(t) \,\ent (\nu[t]\,\|\,\mu[t]), \quad \text{ for all } \nu[t] \in \mathcal P(C([0,t];\R^d)).
\]
Moreover, for every $p\geq 1$ we have 
\[
\sup_{t \le T}\E\big[|X_t|^p\big]+\sup_{t \le T}\E\big[|Y^1_t|^p\big] \leq C_{p,T} < \infty.
\]
where the constant $C_{p,T}$ does not depend on $n$.
\end{lemma}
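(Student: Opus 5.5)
The strategy is to view the path $X[t]$ as the image of the initial condition and the Brownian path under a Lipschitz map, and then use the stability of T$_1$ under tensorization and Lipschitz pushforward. Because $\mu$ is fixed, the drift $b_s(x):=V(\mu_s,x)$ is a deterministic, time-dependent vector field that is $L$-Lipschitz in $x$. So $X[t]=\Phi(X_0,B[t])$, where $\Phi$ is the Itô map of the ordinary SDE $dX=b_s(X)\,ds+\sqrt2\sigma\,dB$. The map $\Phi$ is defined for every continuous driving path, since the equation can be solved pathwise as an integral equation.

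The steps are as follows.

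\begin{enumerate}
\item \textbf{Lipschitz estimate for $\Phi$.} Take two inputs $(x_0,w)$ and $(x_0',w')$. Then
\[
|X_s-X'_s|\le |x_0-x_0'|+\sqrt2\sigma\|w-w'\|_\infty+L\int_0^s|X_r-X'_r|\,dr .
\]
Gronwall gives
\[
\|X-X'\|_{\infty,[0,t]}\le e^{Lt}\big(|x_0-x_0'|+\sqrt2\sigma\|w-w'\|_\infty\big).
\]

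\item \textbf{T$_1$ for Wiener measure.} Wiener measure on $C([0,t];\R^d)$ with the sup norm satisfies T$_1$ with constant $t$. This follows from Bobkov–Götze: if $f$ is $1$-Lipschitz for the sup norm, then $f(B)$ is $\sqrt t$-subgaussian, for instance by Gaussian concentration since $\sup_s|B_s|$ is a Lipschitz functional with variance proxy $t$.

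\item \textbf{Tensorization.} Tensorize T$_1$ for the product $\mu_0\otimes\gamma[t]$ using the metric $|x_0-x_0'|+\sqrt2\sigma\|w-w'\|$. Marton's tensorization for an $\ell^1$-sum metric adds the constants. With scaling, the subgaussian constant is at most $C_{\textup{T}_1}(0)+2\sigma^2t\le \max(2C_{\textup{T}_1}(0),4\sigma^2t)$.

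\item \textbf{Pushforward.} Push forward through the $e^{Lt}$-Lipschitz map $\Phi$. Both $\W_1$ and relative entropy behave well under pushforward: any $\nu[t]\ll\mu[t]$ lifts to a measure on the input space with the same entropy, namely the conditional lift. The constant therefore gains a factor $e^{2Lt}$, which gives $C_{\textup{T}_1}(t)$. Measures $\nu[t]$ not absolutely continuous with respect to $\mu[t]$ are trivial because the entropy is $+\infty$.
\end{enumerate}

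For the moment bounds, use $|V(\nu,x)|\le |V(\delta_0,0)|+L(|x|+\W_2(\nu,\delta_0))$. For $X$, this gives
\[
\sup_{s\le t}|X_s|\le |X_0|+\sqrt2\sigma\sup_s|B_s|+Ct+L\int_0^t\big(|X_s|+\E[|X_s|^2]^{1/2}\big)\,ds .
\]
First take $p=2$ expectations and close with Gronwall. Then for general $p$, use the fact that $\mu_0$ has all moments (it is subgaussian) together with BDG. For the particles, $\W_2^2(m^n_s,\delta_0)=\frac1n\sum_j|Y^j_s|^2$. By exchangeability, $\E\big[(\frac1n\sum_j|Y^j_s|^2)^{p/2}\big]\le\E[|Y^1_s|^p]$ by Jensen, for $p\ge2$. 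The same Gronwall argument applied to $\E\sup_{s\le t}|Y^1_s|^p$ then closes with constants independent of $n$.

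The main obstacle is bookkeeping the exact constants in the tensorization and Brownian steps so that they match the stated $\max(2C_{\textup{T}_1}(0),4\sigma^2t)$. The rest is routine.
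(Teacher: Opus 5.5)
Your proof is correct, but it reaches the T$_1$ inequality by a different route from the paper.

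\textbf{The paper's route.} It argues directly by coupling. Given $\nu[t]\ll\mu[t]$, it uses Girsanov to write $\nu[t]$ as the law of $dZ=\alpha\,ds+\sqrt2\sigma\,dB$. It then starts a copy of $X$ from a $\W_1$-optimal partner of $Z_0$, driven by the same $B$, and applies Gronwall to get
$\E\sup_{s\le t}|X_s-Z_s|\le e^{Lt}\big(\W_1(\mu_0,\nu_0)+\int_0^t\E|V(\mu_s,Z_s)-\alpha(s,Z)|\,ds\big)$.
It closes with the T$_1$ inequality for $\mu_0$, Cauchy--Schwarz, and the path-entropy identity
$\ent(\nu[t]\,\|\,\mu[t])=\ent(\nu_0\,\|\,\mu_0)+\frac{1}{4\sigma^2}\int_0^t\E|V(\mu_s,Z_s)-\alpha(s,Z)|^2\,ds$.

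\textbf{Your route.} You freeze the flow $(\mu_s)$ and observe that $\mu[t]$ is the image of $\mu_0$ times Wiener measure under an $e^{Lt}$-Lipschitz It\^o map. You then chain three standard facts: T$_1$ for Wiener measure in the sup norm, Marton tensorization for an $\ell^1$-sum metric, and Lipschitz pushforward via the density lift. Your bookkeeping is right. It even gives the slightly better constant $e^{2Lt}(C_{\textup{T}_1}(0)+2\sigma^2t)\le C_{\textup{T}_1}(t)$; the factor the paper loses comes from $(a+b)^2\le 2a^2+2b^2$.

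\textbf{What each buys.} The paper's argument is self-contained. It uses only Girsanov and the same entropy identity it needs anyway in the BBGKY estimates, and it never needs Gaussian concentration on path space. Yours is more modular and makes explicit that all that matters is Lipschitz pathwise dependence on $(X_0,B)$. Both arguments owe this to the additive noise.

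\textbf{A small fix in Step 2.} The variance proxy $t$ has nothing to do with $\sup_s|B_s|$ in particular. The reason is that $\sup_{s\le t}|h(s)|\le\sqrt t\,\|\dot h\|_{L^2}$ for Cameron--Martin $h$. So every sup-norm $1$-Lipschitz $f$ is $\sqrt t$-Lipschitz along Cameron--Martin directions, and Gaussian concentration then applies. Alternatively, Girsanov gives $\W_1\le\E\int_0^t|\beta_s|\,ds\le\sqrt{2t\,\ent}$ directly.

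Your moment argument, using Jensen plus exchangeability to control $\W_2(m^n_s,\delta_0)$ uniformly in $n$, is fine and fills in what the paper calls standard.
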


The first main ingredient of the argument is a probabilistic representation of the law $\pi^k[T]$, provided by a well known projection lemma (see \cite[Lemma 4.1]{HierarchiesPaper} and references therein).
Let $Y^i[t]=(Y^i_s)_{s \in [0,t]}$ denote the trajectory up to time $t$ of $Y^i$, for each $i=1,\ldots,n$, and similarly write $Y^{[k]}[t]=(Y^1[t],\dots,Y^k[t])$.

\begin{lemma}
\label{lm: projection path space}
Suppose Assumption \ref{assumption on Phi} holds.
There exists a standard Brownian motion $W=(W^i)_{i=1}^k$, with respect to the filtration generated by $Y^{[k]}$, such that
\begin{equation}
    dY_t^i =  \E\big[ V(m_t^n,Y_t^i)\mid Y^{[k]}[t]\big]\,dt+\sqrt2 \sigma dW^i_t , \ \    i=1,\dots,k. \label{eq:projectionSDE}
\end{equation}
\end{lemma}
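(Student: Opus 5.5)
This is the standard mimicking (projection, or Gyöngy/Brunick–Shreve type) argument on path space. The filtration here is that of $\mathcal G_t := \sigma(Y^{[k]}[t])$.

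\textbf{Step 1: the optional projection.} From \eqref{eq: particle SDE}, for each $i\le k$ we have
\[
Y_t^i = Y_0^i + \int_0^t V(m_s^n,Y_s^i)\,ds + \sqrt2\sigma B_t^i .
\]
Set
\[
\beta^i_s := \E\big[V(m_s^n,Y_s^i)\mid \mathcal G_s\big],
\]
chosen as a progressively measurable version. Such a version exists by the optional projection theorem applied to the process $s\mapsto V(m^n_s,Y^i_s)$. This process is jointly measurable, and it is integrable uniformly on $[0,T]$: $V$ is Lipschitz and hence of linear growth, and Lemma \ref{lm: T1 short time} bounds all moments of $Y^1_t$ uniformly in $n$.

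\textbf{Step 2: a candidate Brownian motion.} Define
\[
W^i_t := \frac{1}{\sqrt2\sigma}\Big(Y^i_t - Y^i_0 - \int_0^t \beta^i_s\,ds\Big), \qquad i\le k,
\]
which uses $\sigma>0$ (Assumption \ref{assumption on Phi}(3)). Then $W^i$ is continuous and $\mathcal G$-adapted, and \eqref{eq:projectionSDE} holds by construction. It remains to show that $W=(W^1,\dots,W^k)$ is a standard $\mathcal G$-Brownian motion.

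\textbf{Step 3: the martingale property.} For $s<t$ and bounded $\mathcal G_s$-measurable $Z$, we write
\[
\sqrt2\sigma\,(W^i_t-W^i_s) = \sqrt2\sigma\,(B^i_t-B^i_s) + \int_s^t \big(V(m^n_r,Y^i_r)-\beta^i_r\big)\,dr .
\]
\begin{itemize}
\item The first term has zero conditional expectation given $\mathcal F_s \supseteq \mathcal G_s$, since $B$ is an $\mathcal F$-Brownian motion and $Y$ is a strong solution, hence $\mathcal F$-adapted.
\item For the second term, Fubini and the tower property give, for each $r\ge s$,
\[
\E\big[Z\,(V(m^n_r,Y^i_r)-\beta^i_r)\big] = 0,
\]
because $Z$ is $\mathcal G_r$-measurable.
\end{itemize}
Hence $W$ is a continuous $\mathcal G$-martingale, and it is square integrable by the moment bounds.

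\textbf{Step 4: quadratic variation and Lévy.} The quadratic covariation of $W$ is computed pathwise: $W^i-B^i$ has finite variation, so $\langle W^i,W^j\rangle_t=\langle B^i,B^j\rangle_t=\delta_{ij}t$. Lévy's characterization then shows that $W$ is a standard $k$-dimensional $\mathcal G$-Brownian motion.

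\textbf{Main obstacle.} The only delicate point is measure-theoretic: choosing a jointly measurable, adapted version of the conditional expectations, so that both the integral in \eqref{eq:projectionSDE} and the Fubini step in Step 3 make sense. I would handle this by invoking the optional projection, or alternatively by regular conditional distributions on path space, as in \cite[Lemma 4.1]{HierarchiesPaper}.
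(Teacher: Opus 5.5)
Your proposal is correct and follows essentially the same route as the paper, which does not reprove the lemma but cites the standard projection (innovation) lemma \cite[Lemma 4.1]{HierarchiesPaper}. That lemma is proved as you do: replace the drift by its conditional expectation given $\sigma(Y^{[k]}[t])$, check that the innovation process is a continuous martingale in that filtration with the same quadratic variation as $(B^1,\dots,B^k)$, and conclude by L\'evy's characterization. The measure-theoretic caveat you flag is the only subtle point, and it is harmless: working in the usual augmentation of $\mathcal G$ costs nothing. Indeed, $\int_0^t\beta^i_s\,ds$ is still $\mathcal G_t$-measurable up to null sets, and $\E[\,\cdot\mid\mathcal G_{t+}]=\E[\,\cdot\mid\mathcal G_t]$ for all but countably many $t$.
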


This can be seen as a form of the BBGKY hierarchy, and we use it to develop entropy estimates. Because of the non-linearity of $V$, there will be a remainder term $\mathcal R(t)$ that is new here compared to prior work on sharp propagation of chaos.

\begin{proposition}
\label{pr: differential inequality short time}
Suppose Assumption \ref{assumption on Phi} holds.
\begin{enumerate}
    \item There exists a constant $C>0$ independent of $n$ and $k$ such that
    \begin{equation}
    \label{eq: differential inequality short suboptimal}
    \ent(\pi^k[T]\,\|\,\mu^{\otimes k}[T])\leq C\frac{k^3}{n^2}+C\int_0^T k  \mathcal R(t)\,dt,
\end{equation}
where we define
\begin{align}
    \mathcal R(t) = \int_0^1\E\bigg[\Big|\int_{\R^d\times \R^d} \delta_m^2  V(sm_t^n+(1-s)\mu_t,Y_t^1,y,z)\,d(m_t^n-\mu_t)^{\otimes2}(y,z)\Big|^2\bigg]\,\,ds. \label{def:remainder1}
\end{align}
\item If we further assume that $\ent(\pi^3[T]\,\|\,\mu^{\otimes 3}[T])\leq M/n^2$ for some constant $M$ independent of $n$ and $k$, then 
\begin{align}
\label{eq: differential inequality short optimal}
    \ent(\pi^k[T]\,\|\,\mu^{\otimes k}[T])\leq C\frac{k^2}{n^2}+C\int_0^Tk \mathcal R(t)\,dt,
\end{align}
where again $C$ is a constant that does not depend on $n$ or $k$.
\end{enumerate}
\end{proposition}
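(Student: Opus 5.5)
We follow the BBGKY/Girsanov strategy of \cite{HierarchiesPaper,SharpChaosLacker2023}, with a Taylor expansion inserted inside the conditional expectation. By Lemma~\ref{lm: projection path space} and Girsanov's theorem (the drifts have linear growth, so the usual localization applies),
\[
\ent(\pi^k[T]\,\|\,\mu^{\otimes k}[T]) = \frac{1}{4\sigma^2}\sum_{i=1}^k\int_0^T \E\Big|\E\big[V(m^n_t,Y^i_t)-V(\mu_t,Y^i_t)\,\big|\,Y^{[k]}[t]\big]\Big|^2dt,
\]
which by exchangeability is at most $\frac{k}{4\sigma^2}\int_0^T\E|\E[\cdots\mid Y^{[k]}[t]]|^2\,dt$ with $i=1$. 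We expand $V(m^n_t,x)-V(\mu_t,x)$ along the segment $s\mapsto sm^n_t+(1-s)\mu_t$ to second order:
\[
\int \delta_m V(\mu_t,x,y)\,d(m^n_t-\mu_t)(y)+\int_0^1(1-s)\int\delta_m^2V(sm^n_t+(1-s)\mu_t,x,y,z)\,d(m^n_t-\mu_t)^{\otimes 2}\,ds.
\]
Using $(a+b)^2\le 2a^2+2b^2$, Jensen for the conditional expectation, and Cauchy--Schwarz in $s$, the second term contributes at most $Ck\int_0^T\mathcal R(t)\,dt$. This is exactly the remainder in \eqref{eq: differential inequality short suboptimal}.

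The first term is a genuine pairwise interaction with kernel $\phi_t(x,y)=\delta_mV(\mu_t,x,y)$. Because $\wgrad V$ is bounded, $\phi_t$ is Lipschitz in $y$ uniformly. It equals $\frac1n\sum_{j}\big(\phi_t(Y^1_t,Y^j_t)-\langle\mu_t,\phi_t(Y^1_t,\cdot)\rangle\big)$. We split off the $j=1$ term, which is $O(1/n)$ in $L^2$ once we use the moment bounds of Lemma~\ref{lm: T1 short time} and the linear growth of $\phi$. This gives a $k/n^2$ contribution. For $j\in\{2,\dots,k\}$ the summands are $Y^{[k]}$-measurable and contribute $O(k^2/n^2)$ to the squared conditional mean, hence $O(k^3/n^2)$ after the factor $k$. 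For $j>k$, exchangeability gives
\[
\frac{n-k}{n}\E\big[\phi_t(Y^1_t,Y^{k+1}_t)-\langle\mu_t,\phi_t(Y^1_t,\cdot)\rangle\,\big|\,Y^{[k]}[t]\big].
\]
Conditionally on $Y^{[k]}$, this is the difference of integrals of a Lipschitz function against $\mathrm{Law}(Y^{k+1}_t\mid Y^{[k]}[t])$ and $\mu_t$.

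The T$_1$ inequality for $\mu[t]$ from Lemma~\ref{lm: T1 short time} bounds its square by $2C_{\textup{T}_1}(t)\,\ent(\pi^{k+1|k}\,\|\,\mu)$. Taking expectations and using the chain rule for entropy, this equals $C\big(H_{k+1}(t)-H_k(t)\big)$, where $H_k(t)=\ent(\pi^k[t]\,\|\,\mu^{\otimes k}[t])$. Doing the computation up to each time $t$ gives the integral form of \eqref{eq: linear differential inequality}:
\[
\tfrac{d}{dt}H_k\le Ck(H_{k+1}-H_k)+Ck^3/n^2+Ck\mathcal R(t),\qquad H_k(0)=0 .
\]
The $k=n$ case closes because $H_n$ is directly bounded by $C\int\mathcal R+C$-type terms. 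Lemma~\ref{lm: linear differential inequality} with $p=3$ and $C_0=0$ then gives part (1).

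For part (2), the only lossy step is the bound on the $j=2,\dots,k$ terms. Here we follow \cite[Section 2]{HierarchiesPaper}. Its square is $\frac{1}{n^2}\big|\sum_{j=2}^k\psi(Y^1,Y^j)\big|^2$ with $\psi=\phi-\langle\mu,\phi\rangle$. Expanding it produces diagonal terms, which give $O(k/n^2)$, and cross terms $\E[\psi(Y^1,Y^j)\cdot\psi(Y^1,Y^l)]$ with $j\ne l$. Under $\mu^{\otimes3}$ these cross terms vanish, because $\psi$ is centered in the second variable. They therefore differ from zero by a transport or entropy distance between $\pi^3_t$ and $\mu^{\otimes3}_t$.

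The main obstacle is that $\psi\cdot\psi$ is not bounded. We would handle it with the Donsker--Varadhan variational formula, using that $\mu^{\otimes3}$ is subgaussian (Lemma~\ref{lm: T1 short time}), so that $\psi(Y^1,Y^j)\cdot\psi(Y^1,Y^l)$ has exponential moments under $\mu^{\otimes 3}$. This bounds each cross term by $C\sqrt{H_3}+CH_3\le C\sqrt{M}/n$. That would only give $k^2/n^3$ times $k$, which is insufficient, so a better estimate is needed. Because the integrand is centered under $\mu^{\otimes 3}$ with subgaussian fluctuations, we aim for the quadratic-in-$\sqrt{H_3}$ bound given by the transport-type inequality $|\E_\pi f-\E_\mu f|\le C\sqrt{H(\pi\|\mu)}$ for centered subexponential $f$. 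This yields cross terms of size $O(1/n)$. The cross terms then contribute $k^2\cdot O(1/n)/n^2\cdot k$ in total, which we hope reorganizes to $k^2/n^2$ per unit time once it is multiplied by the $1/n^2$ prefactor. The resulting source term is of order $k^2/n^2$, i.e.\ $p=2$. Applying Lemma~\ref{lm: linear differential inequality} with $p=2$ then yields \eqref{eq: differential inequality short optimal}.
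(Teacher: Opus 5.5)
Part (1) of your proposal is the paper's argument. The same ingredients appear in the same order: the entropy identity from Lemma~\ref{lm: projection path space}, the second-order flat Taylor expansion with Jensen applied to the remainder, and Kantorovich duality plus T$_1$ plus the chain rule for $j>k$. You then close with Lemma~\ref{lm: linear differential inequality} with $p=3$ and $C_0=0$.

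\textbf{Part (2): bookkeeping error.} Part (2) is correct in substance, but your final bookkeeping is wrong, and as written the argument ends on a ``hope''. Your first Donsker--Varadhan estimate already suffices:
$|\E[\psi(Y^1_t,Y^j_t)\cdot\psi(Y^1_t,Y^l_t)]|\le C\sqrt{H_3}+CH_3\le C/n$, where $H_3=\ent(\pi^3[t]\,\|\,\mu^{\otimes3}[t])\le M/n^2$.
Summing over the at most $k^2$ pairs $j\neq l$ gives
\[
\E\Big|\sum_{j=2}^k\psi(Y^1_t,Y^j_t)\Big|^2\le Ck+C\frac{k^2}{n}\le Ck \qquad\text{because } k\le n,
\]
so this piece contributes $C\frac{k}{n^2}\cdot k=Ck^2/n^2$ to $\frac{d}{dt}H_k$.

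In your own accounting, $k\cdot k^2/n^3=k^3/n^3\le k^2/n^2$, so calling it ``insufficient'' is an arithmetic slip. The ``better'' transport-type estimate you then invoke is not better. Since $H_3\le M$, the term $CH_3$ is dominated by $C\sqrt{M}\sqrt{H_3}$, and both give $O(\sqrt{H_3})=O(1/n)$. To fix it, replace your last paragraph by the display above, which is exactly the paper's step $Ck+Ck^2/n\le Ck$. Then apply Lemma~\ref{lm: linear differential inequality} with $p=2$.

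\textbf{Comparison with the paper's route.} With that repair, your route to the cross-term bound is a valid alternative. Write $Z^i_t=\psi(Y^1_t,Y^i_t)$ in your notation. The paper proceeds as follows:
\begin{enumerate}
\item It writes $\E[Z^i_t\cdot Z^j_t]=\E\big[Z^i_t\cdot\E[Z^j_t\mid Y^1[t],Y^i[t]]\big]$.
\item It bounds the inner conditional mean by Kantorovich duality and T$_1$, using only that $z\mapsto\delta_mV(\mu_t,Y^1_t,z)$ is Lipschitz.
\item The chain rule then gives $\E\big|\E[Z^j_t\mid Y^1[t],Y^i[t]]\big|^2\le CH_3\le C/n^2$.
\item It finishes with Cauchy--Schwarz and a second-moment bound on $Z^i_t$.
\end{enumerate}
This reuses the part-(1) machinery verbatim and needs only second moments.

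Your change of measure from $\pi^3_t$ to $\mu_t^{\otimes3}$ instead requires $\psi(x_1,x_2)\cdot\psi(x_1,x_3)$ to be centered and uniformly subexponential under $\mu_t^{\otimes3}$. This does hold. The marginal $\mu_t$ inherits T$_1$ from $\mu[t]$, so it is subgaussian, and $\psi(x_1,\cdot)$ is $\mu_t$-centered and Lipschitz uniformly in $x_1$. It is extra verification that the paper's conditioning trick avoids.
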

\begin{proof}
We begin with \textit{(1)}. Let $1 \le k < n$. By Lemma \ref{lm: projection path space}, $\pi^k[T]$ is the law of a weak solution of the SDE \eqref{eq:projectionSDE}. Using a well known formula for the path entropy (e.g., \cite[Lemma 4.4]{HierarchiesPaper}) and exchangeability of $Y^1,\dots,Y^k$ we have the following formula for the path entropy between $\pi^k$ and $\mu^{\otimes k}$:
\begin{align}
\begin{split}
    \label{eq: entropy in terms of L2}
    \ent(\pi^k[t]\,\|\,\mu^{\otimes k}[t])&=\sum_{i=1}^k\frac{1}{4\sigma^2}\int_0^t \E\bigg[\Big | V(\mu_s,Y_s^i)-\E\big[ V(m_s^n,Y_s^i)\mid Y^{[k]}[s]\big]\Big |^2\bigg]\,ds\\
    &=\frac{k}{4\sigma^2}\int_0^t \E\bigg[\Big | V(\mu_s,Y_s^1)-\E\big[ V(m_s^n,Y_s^1)\mid Y^{[k]}[s]\big]\Big |^2\bigg]\,ds.
    \end{split}
    \end{align}
    In particular, $\ent(\pi^k[t]\,\|\,\mu^{\otimes k}[t])$ is absolutely continuous in $t$, and it will be convenient to take derivatives. First, we expand $V$ around $\mu_t$ in flat derivatives up to second order:
\begin{align*}
V&(\mu_t,Y_t^1) - \E\Big[ V(m_t^n,Y_t^1)\mid Y^{[k]}[t]\Big] \\
    &= \E \Big[\int_{\R^d} \delta_m  V(\mu_t,Y_t^1,y)d(m_t^n-\mu_t)(y)\,\Big \vert\, Y^{[k]}[t]\Big]\\
    &\quad + \int_0^1(1-s)\E\Big[\int_{\R^d}\int_{\R^d}\delta_m^2  V(s\mu+(1-s)m_t^n,Y_t^1,y,z)\,d(m_t^n-\mu_t)^{\otimes 2}(y,z)\,\Big\vert\,Y^{[k]}[t]\Big]\,ds.
\end{align*}
This is nothing but a Taylor expansion with remainder of $f(1)$ around $f(0)$ for the scalar function $f(s)=V(s\mu_t+(1-s)m_t^n,Y_t^1)$; for details see, e.g., \cite[Lemma 2.2]{Chassagneux2022-jt}. Break up the square and use Jensen's inequality to eliminate the conditional expectation from the second term,
\begin{align}
\label{eq: entropy first bound} 
\frac{d}{dt}\ent(\pi^k [t]\,\|\,\mu^{\otimes k}[t])
    &\leq  \text{Term}_1+\frac{k}{2\sigma^2}\mathcal R(t),
\end{align}
where $\mathcal R(t)$ was defined in the statement of the proposition, and
\begin{align*}
 \text{Term}_1 &:= \frac{k}{2\sigma^2} \E\bigg[\Big|\E\Big[\,\frac{1}{n}\sum_{i=1}^n \delta_m  V(\mu_t,Y_t^1,Y_t^i)-\int \delta_m  V(\mu_t,Y_t^1,z)\,d\mu_t(z)\,\Big\vert\,Y^{[k]}[t]\Big]\Big|^2\bigg].
\end{align*}
Term$_1$ appears also in the previous arguments from \cite{HierarchiesPaper}: observe that it features a pairwise interaction structure. The second term $\mathcal R(t)$ is a remainder that is due to the higher-order interactions embodied in $ V$ (and indeed $\mathcal R(t)=0$ for pairwise interactions $V$). 

We first analyze Term$_1$, along similar lines to \cite{HierarchiesPaper}.
Break up the sum to find
\begin{align}
\label{eq: k^3 equation}
\begin{split}
    \text{Term}_1&\leq \frac{k}{\sigma^2 n^2}\E\bigg[\Big |\sum_{i=1}^k  \delta_m V(\mu_t,Y_t^1,Y_t^{i})-\int_{\R^d}\delta_m V(\mu_t,Y_t^1,z)\,d\mu_t(z) \Big |^2\bigg]\\
    &+  \frac{k}{\sigma^2 n^2}\E\bigg[\bigg | \sum_{i=k+1}^n\bigg(\E\Big[ \delta_m V(\mu_t, Y_t^1,Y_t^{i})\,\Big\vert\, Y^{[k]}[t]\Big]-\int_{\R^d}\delta_m V(\mu_t,Y_t^1,z)\,d\mu_t(z)\bigg)\bigg |^2\bigg].
\end{split}
\end{align}
Fix a continuous function $y:[0,T]\to (\R^{d})^{k}$ and $i>k$. By Assumption \ref{assumption on Phi}(3), the map $z\mapsto \delta_m V(\nu, x,z)$ is Lipschitz uniformly in $\nu,x$ since $\nabla_z \delta_m  V(m,x,z)=\wgrad  V(m,x,z)$ is bounded. Define $\pi^{k+1\mid k}_{[t]\mid [t]}(\cdot \mid Y^{[k]}[t])$ to be the conditional law of $Y^{k+1}[t]$ given $Y^{[k]}[t]$. By Kantorovich duality,
\begin{align}
\label{eq: change of measure}
\begin{split}
\bigg|\E&\bigg[ \delta_m V(\mu_t, Y_t^1,Y_t^{i})\,\Big\vert\, Y^{[k]}[t]\bigg]-\int_{\R^d}\delta_m V(\mu_t,Y_t^1,z)\,d\mu_t(z)\bigg|\\
&=\bigg|\int_{\R^d} \delta_m  V(\mu_t,Y_t^1,z_t) \,d\Big(\pi_{[t]\mid [t]}^{i \mid k}\,(\cdot \mid Y^{[k]}[t])-\mu[t]\Big)(z)\bigg|\\
&\leq \|\wgrad V\|_\infty \W_1\big(\pi_{[t]\mid [t]}^{i \mid k}(\cdot \mid Y^{[k]}[t]),\mu[t]\big).
\end{split}
\end{align}
Plug \ref{eq: change of measure} into \ref{eq: k^3 equation} and apply Cauchy-Schwarz to both sums to see 
\begin{align*}
   \text{Term}_1&\leq \frac{k^2}{\sigma^2 n^2}\sum_{i=1}^k\E\bigg[\Big |\delta_m  V(\mu_t,Y_t^1,Y_t^1)-\int_{\R^d}\delta_m  V(\mu_t,Y_t^1,z)\,d\mu_t(z)\Big |^2\bigg]\\ 
    &+\frac{k (n-k)}{\sigma^2 n^2}\|\wgrad V\|_\infty^2\sum_{i=k+1}^n\E\Big[   \W_1^2\big(\pi_{[t]\mid [t]}^{i \mid k}(\cdot \mid Y^{[k]}[t]),\mu_t\big) \Big].
    \end{align*}
    Use again the fact that $\delta_m V$ is Lipschitz and the the square-integrability of $Y^i$ from Lemma \ref{lm: T1 short time} to bound the first term by $Ck^3/n^2$. For the second term, the T$_1$ inequality from Lemma \ref{lm: T1 short time} yields
    \begin{equation} \label{T1inequality-proof}
    \W_1^2\big(\pi_{[t]\mid [t]}^{i \mid k}(\cdot \mid Y^{[k]}[t]),\mu[t]\big) \le 2C_{\text{T}_1}(t) \ent\Big(\pi_{[t]\mid [t]}^{k+1\mid k}(\cdot\mid Y^{[k]}[t])\,\big\|\,\mu[t]\Big).
    \end{equation}
    By exchangeability, the conditional distributions $\pi^{i\mid k}_{t\mid [t]}$ are the same for every $i>k$. Thus 
    \begin{align}
    \text{Term}_1 &\le C\frac{k^3}{n^2} + \frac{2k}{\sigma^2}\|\wgrad V\|_\infty^2 C_{\text{T}_1}(t) \E\bigg[\ent\Big(\pi_{[t]\mid [t]}^{k+1\mid k}(\cdot\mid Y^{[k]}[t])\,\big\|\,\mu[t]\Big)\bigg].
    \end{align}
    Use the chain rule of relative entropy to get
    \begin{align}
    \begin{split}    \label{eq: term 1 end}
        \text{Term}_1&\leq C\frac{k^3}{n^2}+ \frac{2k}{\sigma^2}\|\wgrad V\|_\infty^2 C_{\text{T}_1}(t)\bigg(\ent(\pi^{k+1}[t]\,\|\,\mu^{\otimes k+1}[t])-\ent(\pi^k[t]\,\|\,\mu^{\otimes k}[t])\bigg).
    \end{split}
    \end{align}
    This derivation was valid for $1 \le k < n$, and for $k=n$ the same bound holds except that the final term vanishes.
    Plug \eqref{eq: term 1 end} into \eqref{eq: entropy first bound} to finally obtain the differential inequality \eqref{eq: differential inequality} from the introduction:
    \begin{align}
    \begin{split}
    \label{eq: differential inequality end}
    \frac{d}{dt}\ent(\pi^k[t]\,\|\,\mu^{\otimes k}[t])&\leq C\frac{k^3}{n^2} +Ck\bigg(\ent(\pi^{k+1}[t]\,\|\,\mu^{\otimes k+1}[t])-\ent(\pi^k[t]\,\|\,\mu^{\otimes k}[t])\bigg) +\frac{k}{2\sigma^2}\mathcal R(t).
    \end{split}
\end{align}
We then apply Lemma \ref{lm: linear differential inequality} to the functions $f_t(k) = \ent(\pi^k[t]\,\|\,\mu^{\otimes k}[t])$, with $p=3$, and absorb the exponentials into the constant $C$, to obtain
\begin{align*}
    \ent(\pi^k[T]\,\|\,\mu^{\otimes k}[T])&\le C\frac{k^3}{n^2}+C k\int_0^T \mathcal R(t)\,dt.
\end{align*}
This completes the proof of \eqref{eq: differential inequality short suboptimal}.

We next prove \textit{(2)}, under the extra assumption that $\ent(\pi^{3}[T]\,\|\,\mu^{\otimes 3}[T])\leq M/n^2$ for some $M\geq 0$. The point is that this additional assumption lets us sharpen the estimate on the first expression on the right-hand side of \eqref{eq: k^3 equation} from $Ck^3/n^2$ to $Ck^2/n^2$. To see this, define
\[
Z_t^i= \delta_m  V(\mu_t,Y_t^1,Y_t^i)-\int_{\R^d} \delta_m  V(\mu_t,Y_t^1,z)\,d\mu_t(z).
\] 
Use the same reasoning as in \eqref{eq: change of measure}, followed by the transport inequality: For $i \neq 1$ and $j \notin \{1,i\}$, taking $k=2$, we have
\begin{align*}
    \E\Big[|\E[Z_t^j \mid Y^1[t],Y^i[t]]|^2\Big]&= \E\bigg[\Big|\E\Big[\delta_m  V(\mu_t,Y_t^1,Y_t^j)-\int_{\R^d} \delta_m  V(\mu_t,Y_t^1,z)\,d\mu_t(z)\,\Big\vert\, Y^1[t],Y^i[t]\Big]\Big|^2\bigg]\\
    &\leq 2\|\wgrad V\|_\infty^2 C_{\text{T}_1}(t) \E\Big[\ent\Big(\pi_{t\mid [t]}^{k+1\mid k}(\cdot\mid Y^{1}[t],Y^{i}[t])\,\big\|\,\mu_t\Big)\Big]\\
    &\leq 2\|\wgrad V\|_\infty^2 C_{\text{T}_1}(t) \ent(\pi^3[t]\,\|\,\mu^{\otimes 3}[t]).
\end{align*}
The last step used  the chain rule for relative entropy and discarded the subtracted term. This is bounded by $C/n^2$ thanks to the assumption that $\ent(\pi^{3}[T]\,\|\,\mu^{\otimes 3}[T])\leq M/n^2$.
Clearly $Z_t^i$ is a function of $Y_t^1,Y_t^i$, and thus by the tower property, Cauchy--Schwartz, the above bound and the moment bound of Lemma \ref{lm: T1 short time}, we have
\[
\E[|Z_t^i \cdot Z_t^j|]=\E\big[|Z_t^i \cdot \E[Z_t^j \mid Y^1[t],Y^i[t]]|\big] \leq C/n, \quad 1 \le i < j \le k.i \neq j.
\]
We return to the first expression on the right-hand side of \eqref{eq: k^3 equation}, using this new bound to control the cross terms when  expanding the  square of the sum. That is, 
\begin{align*}
\E\bigg[ &\Big|\sum_{i=1}^k \delta_m  V(\mu_t,Y_t^1,Y_t^i)-\int_{\R^d} \delta_m  V(\mu_t,Y_t^1,z)\,d\mu_t(z)\Big|^2\bigg] = \E\bigg[\Big|\sum_{i=1}^k Z_t^i \Big|^2\bigg] \\
    &= \sum_{i=1}^k \E[|Z_t^i|^2] +  \sum_{i=1}^k \sum_{j \neq i}\E[Z_t^i \cdot Z_t^j] \\
    &\leq Ck +C\frac{k^2}{n} \le Ck.
    \end{align*}
    Hence, the first expression on the right-hand side of \eqref{eq: k^3 equation} is bounded by $Ck^2/n^2$.
The rest of the proof continues as before, yielding the differential inequality \eqref{eq: differential inequality end} except with the $k^3$ replaced by $k^2$. We again apply Lemma \ref{lm: linear differential inequality}, now with $p=2$ instead of $p=3$, to obtain the  claimed inequality \eqref{eq: differential inequality short optimal}.
\end{proof}

The main estimate on the remainder is the following:
\begin{proposition}
\label{pr: estimating remainder}
Suppose Assumption \ref{assumption on Phi} holds.
There exists a constant $C=C(T)$ such that $\mathcal R(t) \le C/n^2$ for every $t\leq T$.
\end{proposition}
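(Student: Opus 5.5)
The plan is to rewrite $\mathcal R(t)$ as the expectation of a smooth functional of the empirical measure $m^n_t$ alone. This functional vanishes to high order at $\mu_t$, and the weak propagation of chaos machinery of \cite{Chassagneux2022-jt} will then be applied \emph{twice}, which upgrades the generic $O(1/n)$ rate to $O(1/n^2)$.

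\textbf{Step 1: reduction to a functional of the empirical measure.} Fix $t\le T$ and $s\in[0,1]$. Set
\[
G_s(\nu,x)=\Big|\int\!\!\int \delta_m^2V(s\nu+(1-s)\mu_t,x,y,z)\,d(\nu-\mu_t)^{\otimes 2}(y,z)\Big|^2 .
\]
By exchangeability,
\[
\E[G_s(m^n_t,Y^1_t)]=\E[\Phi_s(m^n_t)], \qquad \Phi_s(\nu):=\int G_s(\nu,x)\,d\nu(x),
\]
so $\Phi_s$ depends only on the measure argument. Since $\Phi_s$ is quadratic in the quadratic form $(\nu-\mu_t)^{\otimes2}$, it vanishes at $\mu_t$ together with its flat derivatives of order $1,2,3$. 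Its derivatives up to order $4$ (plus two more for Step 3) are bounded in terms of the bounds on $V$ from Assumption \ref{assumption on Phi}(1). The normalization $\delta_m f(\nu,0)=0$ and the Lipschitz bounds give linear growth, which is controlled through the moment bounds of Lemma \ref{lm: T1 short time}.

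\textbf{Step 2: first weak-chaos expansion.} Let $(\mu^{r,\nu}_u)_{u\ge r}$ denote the McKean--Vlasov flow \eqref{eq: main SDE} started from $\nu$ at time $r$, and set $\mathcal U(r,\nu)=\Phi_s(\mu^{r,\nu}_t)$. This solves the backward Kolmogorov equation on $\mathcal P_2(\R^d)$. By \cite[Theorem 2.6]{Chassagneux2022-jt}, its Wasserstein/space derivatives up to total order $4$ (or the order needed) are bounded uniformly in $r\le t$, given the six derivatives of $V$. Applying It\^o's formula to $\mathcal U(r,m^n_r)$ along the particle system and using that $\mathcal U$ solves the backward equation gives
\[
\E\,\Phi_s(m^n_t)=\E\,\mathcal U(0,m^n_0)+\frac{\sigma^2}{n}\int_0^t\E\big[h_r(m^n_r)\big]\,dr,
\]
where $h_r(\nu):=\int \mathrm{Tr}\,\nabla_y\wgrad^2\mathcal U(r,\nu,y,y)\,d\nu(y)$. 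For the initial term, iid $m^n_0$ allows an explicit U-statistic computation (or a second-order expansion of $\mathcal U(0,\cdot)$ around $\mu_0$). Because $\mathcal U(0,\cdot)$ and its first derivative vanish at $\mu_0$, this term is $O(1/n^2)$. For that, I would check that the vanishing of the first three flat derivatives of $\Phi_s$ at $\mu_t$ transfers to $\mathcal U(r,\cdot)$ at $\mu_r$ by the chain rule along the flow.

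\textbf{Step 3: second weak-chaos expansion.} It remains to show $\E h_r(m^n_r)=O(1/n)$. The key observation is that $h_r(\mu_r)=0$: the second derivative of $\mathcal U(r,\cdot)$ at $\mu_r$ is a composition of derivatives of the flow with $\delta_m^{\le 2}\Phi_s(\mu_t)$, and these vanish. Repeating the Step 2 argument with $\Phi_s$ replaced by $h_r$ gives
\[
\E h_r(m^n_r)=\E\,\tilde{\mathcal U}(0,m^n_0)+O(1/n),
\]
where $\tilde{\mathcal U}(0,\mu_0)=h_r(\mu_r)=0$. Then $\E\,\tilde{\mathcal U}(0,m^n_0)=O(1/n)$ by the standard iid bound $\E\,f(m^n_0)-f(\mu_0)=O(1/n)$ for smooth $f$. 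Combining the two steps yields $\E\,\Phi_s(m^n_t)\le C/n^2$ uniformly in $s$ and $t\le T$, and integrating in $s$ gives the claim.

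\textbf{Main obstacle.} The hardest part is regularity: controlling the derivatives of $r\mapsto\mathcal U(r,\cdot)$ up to the order consumed by two successive expansions (two derivatives each, on top of those of $\Phi_s$). This must be done uniformly in $r$ and in $s$, and with the linear-growth test functions that arise. I would first try to invoke the derivative estimates for the flow map $\nu\mapsto\mu^{r,\nu}_t$ from \cite{Chassagneux2022-jt} and \cite{Buckdahn2017-op} directly and combine them via a Fa\`a di Bruno-type chain rule. This count is exactly what forces six derivatives on $V$. A secondary concern is verifying the vanishing of the low-order derivatives of $\mathcal U$ and $h_r$ at the deterministic flow.
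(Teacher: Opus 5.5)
Your route is the paper's. You write $\mathcal R(t)=\E[R_t(m^n_t)]$ with $R_t$ vanishing to second order at $\mu_t$, expand along the particle system via the backward Kolmogorov equation, and show the iid initial term is $O(1/n^2)$. You then handle the It\^o correction $\frac{\sigma^2}{n}\int_0^t\E[h_r(m^n_r)]\,dr$ by a second expansion using $h_r(\mu_r)=0$, which is exactly the paper's $g_t$ argument.

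The step that fails as written is your justification of the initial term. Vanishing of $\mathcal U(0,\cdot)$ and its \emph{first} derivative at $\mu_0$ only gives $O(1/n)$. For iid $m^n_0$ the first-order term $\E\int\delta_m\mathcal U(0,\mu_0,y)\,d(m^n_0-\mu_0)(y)$ is zero anyway. The $1/n$ coefficient is $\frac12\iint\big[\delta_m^2\mathcal U(0,\mu_0,x,x)-\delta_m^2\mathcal U(0,\mu_0,x,y)\big]\,d\mu_0(x)\,d\mu_0(y)$. For example, $f(\nu)=\big(\int\phi\,d(\nu-\mu_0)\big)^2$ vanishes to first order at $\mu_0$, yet $\E f(m^n_0)=\mathrm{Var}_{\mu_0}(\phi)/n$. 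What is needed, and what the paper uses, is $\wgrad^2\mathcal U(0,\mu_0,\cdot,\cdot)=0$, which is the $r=0$ case of the fact you invoke in Step 3. Then $\delta_m^2\mathcal U(0,\mu_0,x,y)=a(x)+b(y)$, this coefficient cancels, and the $O(1/n^2)$ remainder involves only the third and fourth flat derivatives (the paper's $A_1,A_2,A_3$).

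Smaller points:
\begin{itemize}
\item The It\^o correction is $h_r(\nu)=\int\tr\wgrad^2\mathcal U(r,\nu,y,y)\,d\nu(y)$, without the extra $\nabla_y$.
\item $\Phi_s$ does not have bounded derivatives, since $|\delta_m^2V(\nu,x,y,z)|\lesssim|y||z|$, so the results of \cite{Chassagneux2022-jt} do not apply directly. The paper cuts off $\delta_m^2V$ with $\varphi^N(x,y,z)$ and uses semigroup derivative bounds whose constants depend only on polynomial growth constants (Lemma~\ref{lm: bounds onf wgrad semigroup}). It then lets $N\to\infty$.
\item Both expansions consume only derivatives of $\mathcal U$ up to order four, since the second one uses $\wgrad^{\le 2}h_r$. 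So four derivatives of $\Phi_s$ suffice, which is exactly what $V\in\mathcal C^6_{\textup{bd}}$ provides. Your count of six derivatives of $\Phi_s$ would require eight of $V$.
\end{itemize}
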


The proof of Proposition \ref{pr: estimating remainder} is involved and will occupy Sections \ref{sect: weak chaos} and \ref{sect: remainder}. Taking it for granted for now, we show how to prove Theorem \ref{th: short time}.

\begin{proof}[Proof of Theorem \ref{th: short time}]
    Use Proposition \ref{pr: estimating remainder} along with the inequality from Proposition \ref{pr: differential inequality short time}(1) to obtain  $\ent (\pi^k[T]\,\|\,\mu^{\otimes k}[T]) \leq Ck^3/n^2$ for all $1 \le k \le n$, 
    where $C$ does not depend on $k$ or $n$. As a result, $\ent (\pi^3[T]\,\|\,\mu^{\otimes 3}[T])\leq C/n^2$, and thus Proposition \ref{pr: differential inequality short time}(2) applies. Combine Proposition \ref{pr: differential inequality short time}(2) with Proposition \ref{pr: estimating remainder} to get  $\ent (\pi^k[T]\,\|\,\mu^{\otimes k}[T]) \leq Ck^2/n^2$ for all $1 \le k \le n$, which completes the proof.
\end{proof}

We briefly explain how to adapt the proof to the time-dependent case.
\begin{remark}
    \label{rk: time dependent setting} Theorem \ref{th: short time} is stated and proved for a time independent drift $ V=V(m,x)$. If $ V$ depends on time, we can apply the usual trick of treating the time variable as an additional coordinate of the spatial variable. For instance, rewrite \eqref{eq: main SDE} as
    \begin{align*}
        dZ_t&=\,dt\\
        dX_t&=  V(\mu_t,X_t,Z_t)\,dt+\sqrt 2 \sigma\,dB_t.
    \end{align*}
    with $Z_0=0$, and transform the particle system \eqref{eq: particle SDE} similarly, by appending a time variable for each particle. The only issue with this transformation is that the $d+1$-dimensional process $(X_t,Z_t)$ is now driven by a degenerate noise. However, the only place in our paper where non-degenerate noise is used is to justify the entropy identity  \eqref{eq: entropy in terms of L2}. But, as noted in \cite[Remark 4.5]{HierarchiesPaper}, the entropy identity still holds in this case, because the noise is degenerate only in a coordinate for which the drift does not depend on the measure argument. In fact, for a similar reason, our analysis applies also to kinetic (underdamped) mean field Langevin dynamics. Note that since the drift $V$ does not depend on the law of $Z$, we do not need assumptions on time regularity.
\end{remark}

\begin{proof}[Proof of Corollary \ref{cr: metric convergences short time}]
The bound on total variation follows immediately by Pinsker's inequality. By Lemma \ref{lm: projection path space}, we can realize $Y^{[k]}=(Y^1,\ldots,Y^k)$ as the solution of an SDE driven by a Brownian motion $W=( W^i)_{i=1}^k$ in the filtration generated by $Y^{[k]}$.
Now, consider $k$ iid copies of the McKean-Vlasov equation, coupled synchronously to $Y^{[k]}$ using this Brownian motion $W$. That is, consider the SDE system
\begin{align*}
dX_t^i =  V(\mu_t,X^i_t)dt + \sqrt 2 \sigma\,d W_t^i, \quad X^i_0=Y^i_0.
\end{align*}
Since the coefficients are Lipschitz, there exists a unique strong solution to the above SDE, with $\Law(X_t^i)=\mu_t$ for each $i$ thanks to uniqueness of the McKean-Vlasov equation. Recalling the form of the dynamics for $Y^{[k]}$ from \eqref{eq:projectionSDE}, we use Ito's formula, Young's inequality, and the $L$-Lipschitz property of $ V$ to obtain
\begin{align*}
    \sup_{t\leq T}&|X_t^i-Y_t^i|^2=2\sup_{t\leq T}\int_0^t (X_s^i-Y_s^i) \cdot ( V(\mu_s,X^i_s)-\E\big[ V(m_s^n,Y_s^i)\mid Y^{[k]}[s]\big])\,ds\\
    &\leq \int_0^T |X_s^i-Y_s^i|^2+2| V(\mu_s,X_s^i)- V(\mu_s,Y_s^i)|^2+2\Big| V(\mu_s,Y_s^i)-\E\big[ V(m_s^n,Y_s^i)\mid Y^{[k]}[s]\big]\Big|^2\,ds\\
    &\leq \int_0^T (1+2L)\sup_{t\leq s}|X_t^i-Y_t^i|^2+2\Big| V(\mu_s,Y_s^i)-\E\big[ V(m_s^n,Y_s^i)\mid Y^{[k]}[s]\big]\Big|^2\,ds
\end{align*}
Using Gronwall's inequality, taking expectations and summing over the indices yields
\begin{align*}
    \E\bigg[\sum_{i=1}^k\sup_{t\leq T}|X_t^i-Y_t^i|^2\bigg]&\leq 2e^{(1+2L)T}\sum_{i=1}^k\int_0^T \E\bigg[\Big| V(\mu_s,Y_s^i)-\E\big[ V(m_s^n,Y_s^i)\mid Y^{[k]}[s]\big]\Big|^2\bigg]\,ds\\
    &=2e^{(1+2L)T}4\sigma^2\ent (\pi^k[T]\,\|\,\mu^{\otimes k}[T])=O(k^2/n^2).
\end{align*}
Where we are using \cite[Lemma 4.4]{HierarchiesPaper} to identify the time integral with relative entropy. Because $\Law\big(Y^{[k]},(X^1,\dots,X^k)\big)$ is a coupling of $\pi^k$ and $\mu^{\otimes k}$, we deduce that
\begin{equation*}
\W_2^2(\pi^k[T],\mu^{\otimes k}[t])\leq\E\bigg[\sup_{t\leq T}\sum_{i=1}^k |X_t^i-Y_t^i|^2\bigg]\leq \E\bigg[\sum_{i=1}^k\sup_{t\leq T}|X_t^i-Y_t^i|^2\bigg]=O(k^2/n^2). \qedhere
\end{equation*}
\end{proof}

\subsection{Uniform in Time Analysis}
    In this section we prove the uniform in time estimates from \ref{th: UiT} under the stronger set of assumptions \ref{Assumption: UiT}.  
    We begin by showing that our assumptions imply some fairly standard moment bounds, as well as the somewhat less standard fact that $\mu_t$ satisfies a log-Sobolev inequality uniformly in $t \ge \overline{T}$, after some threshold time $\overline{T}$. The latter fact is less standard because we have not assumed an LSI at time zero, just that $\mu_0$ is subgaussian. The holds after a certain time thanks to the regularizing effects of the diffusion, which we will show (quantitatively) using a result of  \cite{Chen2021-ch}.
    
    \begin{remark}
    \label{rk: UiT LSI}
    Assumption \ref{Assumption: UiT}(3) implies that $V$ has negative definite Jacobian $\nabla_x  V \le -\lambda I$ by \cite[Proposition B.6]{GangboConvexity}. Equivalently,
    \begin{equation}
    (V(\mu_t,x)-V(\mu_t,y))\cdot (x-y) \le -\lambda |x-y|^2, \quad \forall x,y\in\R^d, \ t \ge 0. \label{remark:dissipation}
    \end{equation}
    Technically \cite[Proposition B.6]{GangboConvexity} is stated for the Wasserstein gradient of a geodesically convex function, but the proof adapts without change to the monotone non-gradient case.
\end{remark}
We will make some use of the 2-divergence and relative Fisher information, defined respectively as follows for $\alpha,\beta \in \mathcal P(\R^d)$:
\begin{align*}
    D_2(\alpha\,\|\,\beta) &=\begin{cases}
    \int \big|\frac{d\alpha}{d\beta}\big|^2\,d\beta \text{ if }\alpha \ll\beta \\
    +\infty \text{ otherwise,} \end{cases} \\
    \fin (\alpha\,\|\,\beta) &=\begin{cases}
        \int |\nabla \log \frac{d\alpha}{d\beta}|^2 \, d\alpha \text{  if } \alpha \ll \beta, \text{ and } \log \frac{d\alpha}{d\beta} \text{ admits a weak gradient} \\
        +\infty \text{ otherwise}.
    \end{cases}
\end{align*}

\begin{proposition}
\label{pr: LSI}
Suppose Assumption \ref{Assumption: UiT} holds.
For every $\delta > 0$ there exist $\overline{T} \in [0,\infty)$ such that, for every $t\geq \overline{T}$, $\mu_t$ satisfies a log-Sobolev inequality with constant $C_{\textup{LS}} \le 2(1+\delta)\sigma^2/\|\wgrad V\|_\infty$. That is, 
    \[
    \ent(\nu\,\|\,\mu_t)\leq \frac{C_{\textup{LS}}}{4}\fin(\nu\,\|\,\mu_t), \quad \text{for all } \nu\in \mathcal P(\R^d).
    \]
    Moreover, for every $p \ge 1$, we have
    \[
\sup_{t \ge 0}\E\big[|X_t|^p\big]+\sup_{t \ge 0}\E\big[|Y^1_t|^p\big] \leq C_p < \infty.
\]
where the constant $C_p$ does not depend on $n$.
\end{proposition}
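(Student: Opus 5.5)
The plan is to view $\mu_t$ as the time-$t$ marginal of a \emph{linear}, time-inhomogeneous diffusion. Freeze the flow $(\mu_s)_{s\ge0}$ and set $b_s(x):=V(\mu_s,x)$, so that $X$ solves $dX_t=b_t(X_t)\,dt+\sqrt2\sigma\,dB_t$. By Remark \ref{rk: UiT LSI}, each $b_s$ is $\lambda$-dissipative as in \eqref{remark:dissipation}. Let $P_{s,t}(x,\cdot)$ be the transition kernel of this diffusion; then $\mu_t=\int P_{0,t}(x,\cdot)\,d\mu_0(x)$. Two standard facts about $P_{0,t}$ will be used.
\begin{enumerate}
\item Synchronous coupling and \eqref{remark:dissipation} give $\W_2(P_{0,t}(x,\cdot),P_{0,t}(x',\cdot))\le e^{-\lambda t}|x-x'|$. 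More strongly, the map sending $x$ to the solution started at $x$ (same noise) is $e^{-\lambda t}$-Lipschitz.
\item Each kernel $P_{0,t}(x,\cdot)$ satisfies an LSI with constant $c_t\le \frac{2\sigma^2}{\lambda}(1-e^{-2\lambda t})$, in the normalization of the statement. This follows from the time-inhomogeneous Bakry--\'Emery argument: the commutation $|\nabla P_{s,t}f|\le e^{-\lambda(t-s)}P_{s,t}|\nabla f|$, inherited from the contraction, is interpolated along $s\mapsto P_{s,t}(P_{0,s}\dots)$ starting from a Dirac mass.
\end{enumerate}

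The key step is to upgrade these kernel LSIs to an LSI for the mixture $\mu_t$ using only that $\mu_0$ is subgaussian. Here I would invoke the result of \cite{Chen2021-ch}. In the form I plan to use it, a mixture $\int Q_x\,d\rho(x)$ satisfies an LSI with constant at most $(1+\delta)c$ under two conditions: each $Q_x$ satisfies LSI$(c)$, and the mixing is ``small'' compared to $c$. The smallness is measured by the subgaussian constant of the pushforward of $\rho$ under the Lipschitz parametrization of the kernels, which must be $\ll c$. In the Gaussian prototype this is exactly the statement that $\rho * N(0,s)$ satisfies an LSI with constant close to the Gaussian one once $s$ dominates the subgaussian constant of $\rho$.

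To fit our setting, I would parametrize the kernels through the contractive flow map. The effective mixing measure is then $\mu_0$ pushed forward by an $e^{-\lambda t}$-Lipschitz map. By Bobkov--G\"otze, in the form \eqref{ineq:mu_0subgaussian}, this measure is subgaussian with constant $e^{-2\lambda t}C_{\textup{T}_1}(0)$. That constant tends to $0$, while $c_t\to 2\sigma^2/\lambda$. So for every $\delta>0$ there is $\overline T$ such that for $t\ge\overline T$ the hypothesis of \cite{Chen2021-ch} holds, giving $C_{\textup{LS}}\le 2(1+\delta)\sigma^2/\lambda$. (I read the $\|\wgrad V\|_\infty$ in the statement as the dissipativity constant $\lambda$.)

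This is where I expect the main obstacle. The result of \cite{Chen2021-ch} is phrased for Gaussian convolutions, or for kernels that are translates of one another. Our kernels are not exact translates, so I would have to check that its proof only uses (i) an LSI for the kernels, (ii) $\W_2$/Lipschitz dependence of the kernels on the mixing variable, and (iii) subgaussianity of the mixing law. If this fails, the fallback is to split $[0,t]$ as $[0,t_1]\cup[t_1,t]$. On the short interval, with $t_1$ small so that the drift is a negligible perturbation, I would use Gaussian smoothing to get some finite LSI constant for $\mu_{t_1}$. From $t_1$ onward I would propagate it via $C_{t}\le e^{-2\lambda(t-t_1)}C_{t_1}+\frac{2\sigma^2}{\lambda}(1-e^{-2\lambda(t-t_1)})$, which also converges to $2\sigma^2/\lambda$.

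For the moment bounds, I would apply It\^o to $|X_t|^p$. Assumption \ref{Assumption: UiT}(2), applied with $Y\equiv0$, together with the Lipschitz bound on $V$ gives $x\cdot V(\mu,x)\le -\lambda|x|^2+C|x|(1+\W_2(\mu,\delta_0))$ with a small coefficient in front of the measure part. This yields $\frac{d}{dt}\E|X_t|^2\le-\lambda\E|X_t|^2+C$, and then $\E|X_t|^p$ bounds by induction on $p$. For the particles I would use the same computation on $\frac1n\sum_i|Y^i_t|^p$. The $n$-particle drift inherits dissipativity from Assumption \ref{Assumption: UiT}(2), applied to the uniform random index on $[n]$, which realizes empirical measures as laws. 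Exchangeability then transfers the averaged bound to $\E|Y^1_t|^p$, uniformly in $n$ and $t$.
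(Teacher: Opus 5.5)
Your moment bounds and the two kernel facts are fine, and you are right that the constant in the statement should read $2(1+\delta)\sigma^2/\lambda$. The LSI argument, however, has a genuine gap in both of your routes.

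\textbf{Main route.} \cite{Chen2021-ch} does not give a mixture LSI with constant $(1+\delta)c$. Their mixture bound, in the form used in the paper's proof, is
$12\frac{\sigma^2}{\lambda}(1-e^{-2\lambda t})\,(2+\int D_2^2\,d\mu_0^{\otimes 2})(1+\log\int D_2^2\,d\mu_0^{\otimes 2})$, with $D_2=D_2(P_t^x\,\|\,P_t^y)$. Since $D_2\ge 1$, this is at least $18c_t$ even when $\mu_0$ is a Dirac mass. So it can only ever yield \emph{some} finite constant, never one close to $2\sigma^2/\lambda$.

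The obstacle you flag is also misplaced. Their theorem already covers arbitrary families of components, not just translates. What it requires is integrability of the R\'enyi-2 (or $\chi^2$) divergences between components, and $\W_2$-contraction of the flow map does not control divergences. The missing ingredient is a Harnack-type (reverse transport) inequality for $\lambda$-dissipative diffusions: $D_2(P_t^x\,\|\,P_t^y)\le\exp\{c_t|x-y|^2\}$ with $c_t$ of order $\lambda/(e^{2\lambda t}-1)$. This is the rigorous version of your \emph{effective subgaussian constant} $e^{-2\lambda t}C_{\textup{T}_1}(0)$ heuristic. It is $\mu_0^{\otimes 2}$-integrable once $t$ exceeds a threshold depending on $C_{\textup{T}_1}(0)$.

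\textbf{Fallback.} Your fallback contains the right mechanism for sharpness. The interpolation bound $C(t)\le e^{-2\lambda(t-t_1)}C(t_1)+\frac{2\sigma^2}{\lambda}(1-e^{-2\lambda(t-t_1)})$ is exactly what drives the constant to $2\sigma^2/\lambda$. But its first step fails: for small $t_1$, $\mu_{t_1}$ need not satisfy any LSI when $\mu_0$ is merely subgaussian.

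Here is a counterexample within the paper's assumptions. Take $d=1$, $V(\nu,x)=-\lambda x$, and $\mu_0\propto\sum_{k\ge 0}e^{-4^k/(2\tau)}\delta_{2^k}$, which is subgaussian. Then $\mu_{t_1}$ is a rescaled Gaussian convolution of $\mu_0$ with variance $\approx 2\sigma^2 t_1$. Once $\sigma^2 t_1\ll\tau$, the valleys between consecutive atoms have depth roughly $e^{-4^k/(16\sigma^2 t_1)}$, while the mass beyond them is roughly $e^{-2\cdot 4^k/\tau}$. Muckenhoupt's criterion then fails, so even the Poincar\'e inequality fails. Separately, \emph{the drift is negligible} gives no Holley--Stroock-type transfer of an LSI from the pure Gaussian convolution to $\mu_{t_1}$.

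\textbf{Correct assembly.} The first finite constant has to be obtained at a time $T^*$ that is large relative to $C_{\textup{T}_1}(0)$, by combining \cite{Chen2021-ch} with the Harnack bound. Only then does your interpolation give $C_{\textup{LS}}\le 2(1+\delta)\sigma^2/\lambda$ for $t\ge\overline T$. This is precisely the paper's proof.

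\textbf{On the moments.} Working with $\frac1n\sum_i|Y^i_t|^p$ is a good choice; It\^o on the full vector $|Y_t|^p$ produces quantities growing like $n^{p/2}$. For $p>2$, though, you need two things to absorb the $\W_1(m^n_t,\delta_0)$ term: the pointwise dissipativity of Remark \ref{rk: UiT LSI}, and $\|\wgrad V\|_\infty<\lambda$. Assumption \ref{Assumption: UiT}(2) alone is only an $L^2$ statement and does not suffice.
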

\begin{proof}
We begin with the moment bounds.
The drift of the system $(Y^1,\ldots,Y^n)$ is given by $b=(b^1,\ldots,b^n) : (\R^d)^n \to (\R^d)^n$, where
\[
b^i(y^1,\ldots,y^n) := V\bigg(\frac{1}{n}\sum_{j=1}^n\delta_{y^j},y_i\bigg).
\]
For $y=(y^1,\ldots,y^n)$ it satisfies
\begin{align*}
y \cdot b(y) &= \sum_{i=1}^n y^i \cdot \bigg(V\bigg(\frac{1}{n}\sum_{j=1}^n\delta_{y^j},y^i\bigg) - V(\delta_0,0)\bigg) + \sum_{i=1}^n y^i \cdot V(\delta_0,0) \\
    &\le -\lambda\sum_{i=1}^n |y^i|^2 + \frac{\lambda}{2}\sum_{i=1}^n|y^i|^2 + \frac{nd}{2\lambda }|V(\delta_0,0)|^2 = -\frac{\lambda}{2}|y|^2 + \frac{nd}{2\lambda }|V(\delta_0,0)|^2,
\end{align*}
where the inequality used the montonicity assumption \ref{Assumption: UiT}(2). 
It is a standard consequence of this that $\sup_{t \ge 0}\E[|Y^1_t|^p] < \infty$ for each even integer $p > 0$. Indeed, recalling that $dY_t=b(Y_t)dt +\sqrt{2}\sigma dB_t$, applying Ito's formula yields
\begin{align*}
d|Y_t|^p &= \Big(p|Y_t|^{p-2}Y_t \cdot b(Y_t) + \sigma^2 p(p+nd-2) |Y_t|^{p-2}\Big)dt + \sqrt{2}\sigma p|Y_t|^{p-2}Y_t \cdot dB_t.
\end{align*}
By integrating (technically, up to the first exit time of $Y_t$ from the ball of radius $r$, and then sending $r\to\infty$), taking expectations, and applying the previous bound,
\[
\frac{d}{dt}\E\big[|Y_t|^p\big] \le -\frac{\lambda p}{2}\E\big[|Y_t|^p\big] + \Big(\frac{ndp}{2\lambda} +  \sigma^2 p(p+nd-2)\Big) \E\big[|Y_t|^{p-2}\big].
\]
By Young's inequality, for any $\epsilon > 0$ we have $\E|Y_t|^{p-2} \le \epsilon\E|Y_t|^{p} + c$, for a constant $c$ depending only on $\epsilon$ and $p$. By choosing $\epsilon$ appropriately, we get
\[
\frac{d}{dt}\E\big[|Y_t|^p\big] \le -\frac{\lambda p}{4}\E\big[|Y_t|^p\big] + C(1+dn),
\]
for a constant $C$ depending only on $(\lambda,p,\sigma)$. Apply Gronwall to get 
\[
\E\big[|Y_t|^p\big] \le e^{-\lambda p t/4}\E\big[|Y_0|^p\big] + C(1+dn)(1-e^{-\lambda p t/4}).
\]
Finally, by exchangeability,
\[
\E\big[|Y_t|^p\big]= \E\bigg[ \Big(\sum_{i=1}^n |Y^i_t|^2\Big)^{p/2}\bigg] \ge \E\sum_{i=1}^n \big[|Y^i_t|^p\big] = n\E\big[|Y^1_t|^p\big].
\]
Since $\E|Y_0^i| < \infty$, we finally deduce that $\sup_{t \ge 0}\E|Y^1_t|^p < \infty$ as claimed. The proof that $\sup_{t \ge 0}\E|X_t|^p < \infty$ is similar and simpler, so we omit the details; it could also be deduced from propagation of chaos and the preceding estimate on $\E\Big[|Y^1_t|^p\Big]$.

We next prove the claimed LSI, by showing that the LSI constant of $\mu_t$ becomes finite for $t$ large enough and then that its limsup as $t\to\infty$ is at most $2\sigma^2/\lambda$.
    Let $P_t^x=\Law(X_t\mid X_0=x)$ where $X_t$ is a solution to \eqref{eq: main SDE}. The dissipativity inequality \eqref{remark:dissipation} implies (e.g., by \cite[Theorem 4.1]{collet2008logarithmic}) 
    \[\frac{2\sigma^2(1-e^{-2\lambda t})}{\lambda}\]
     for every $x \in \R^d$.
    By the Markov property, $\mu_t=\int P_t^x\,d\mu_0(x)$, and we can apply  recent results of \cite{Chen2021-ch} on the LSI for mixture distributions. To do this, we need bounds for $D_2(P_t^x\,\|\,P_t^y)$.
    To obtain these in a sharp form, we apply the reverse transport inequality (essentially equivalent to a Harnack inequality) of \cite[Theorem 6.1(5)]{HarnackInequalities}, which states that\footnote{Strictly speaking, \cite[Theorem 6.1(5)]{HarnackInequalities} is stated for gradient drifts which are not time dependent, but their short proof via Girsanov's theorem given in Sections 4.1 and A.4.1 clearly generalize to the dissipative setting of \eqref{remark:dissipation}.}
    \begin{equation}
        D_2(P_t^x\,\|\,P_t^y)\leq \exp\Big\{\frac{\lambda}{e^{2t\lambda}-1} \cdot |x-y|^2\Big\}.
    \end{equation}
    We can now apply \cite[Theorem 1(2)]{Chen2021-ch} to deduce that $\mu_t$ satisfies an LSI with a constant $C_{\text{LS}}(t)$ bounded from above by  
    \begin{align*}
           12 &\frac{\sigma^2}{\lambda}(1-e^{-2{\lambda t}})\Big(2+\int_{\R^{2d}}D_2^2(P_t^x\,\|\,P_t^y)\,d\mu_0^{\otimes 2}(x,y)\Big)\Big(1+\log \int_{\R^{2d}}D_2^2(P_t^x\,\|\,P_t^y)\,d\mu_0^{\otimes 2}(x,y))\Big) .
\end{align*}
Recall that Assumption \ref{assumption on Phi}(2) implies that $\mu_0$ is subgaussian in the sense of \eqref{ineq:mu_0subgaussian}. In particular, letting $Z$ denote a standard Gaussian in dimension $d$,
for any  $c < 1/2C_{\textup{T}_1}(0)$ we have
\begin{align}
\int_{\R^{2d}} e^{\frac{c|x-y|^2}{2}}d\mu_0^{\otimes 2}(x,y) &= \E\Big[\int_{\R^{2d}} e^{ \sqrt{c} (x-y) \cdot Z}d\mu_0^{\otimes 2}(x,y)\Big] \le \E\Big[ e^{ C_{\textup{T}_1}(0) c |Z|^2}\Big] < \infty
\end{align}
Taking $T^*> (2\lambda)^{-1}\log(1+4\lambda  C_{\textup{T}_1}(0)  )$ yields  $C_{\text{LS}}(T^*)<\infty$.
Now that we have shown the LSI constant is finite at time $T^*$, we can appeal to known results on how the LSI constant of dissipative Langevin dynamics can be controlled by an interpolation between that of the initial distribution and that of the invariant measure.
Specifically, using \cite[Proposition 1.1]{TimeUniformLSI}, we obtain for every $t\geq T^*$ that
\[C_{\text{LS}}(t)\leq e^{-2\lambda (t-T^*)}C_{\text{LS}}(T^*)+ \frac{2\sigma^2}{\lambda}(1-e^{-2\lambda (t-T^*)}).\]
The right-hand side converges to $2\sigma^2/\lambda$ as $t\to\infty$, and the claim follows.
\end{proof}

We need a version of Lemma \ref{lm: projection path space} for the time marginals of the process. The following is a straightforward adaptation of the well-known derivation of the BBGKY hierarchy (e.g., \cite[Lemma 3.4]{SharpChaosLacker2023}), and we omit its proof.
\begin{lemma}
\label{lm: mimicking theorem marginal}
Suppose Assumption \ref{assumption on Phi} holds.
    The measure flow $(\pi^k_t)_{t \ge 0} \in C([0,\infty);\mathcal P((\R^d)^k))$ is a distributional solution of the Fokker-Planck PDE
    \begin{align*}
        \partial_t \pi_t^k(x)=-\sum_{i=1}^k \nabla_i \cdot \Big(\E[V(m_t^n,Y_t^i)\mid Y_t^{[k]}=x]\pi_t^k(x)\Big)+\sigma^2 \Delta \pi_t^k(x), \quad t > 0, \ x \in (\R^d)^k.
    \end{align*}
\end{lemma}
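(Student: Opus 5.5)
The plan is to test the PDE against smooth compactly supported functions: fix $\varphi \in C_c^\infty((\R^d)^k)$, apply It\^o's formula to $\varphi(Y_t^{[k]})$ using the particle SDE \eqref{eq: particle SDE}, and then condition on $Y_t^{[k]}$. Concretely, write
\[
d\varphi(Y^{[k]}_t) = \sum_{i=1}^k \Big(\nabla_i\varphi(Y_t^{[k]})\cdot V(m_t^n,Y_t^i) + \sigma^2\Delta_i\varphi(Y_t^{[k]})\Big)dt + \sqrt2\sigma\sum_{i=1}^k \nabla_i\varphi(Y_t^{[k]})\cdot dB_t^i.
\]
The stochastic integral is a true martingale because $\nabla\varphi$ is bounded, so it drops out after taking expectations. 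We then integrate from $s$ to $t$.

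In the drift term, we use the tower property to replace $V(m_t^n,Y_t^i)$ by $\E[V(m_t^n,Y_t^i)\mid Y_t^{[k]}]$. This conditional expectation is a measurable function $x\mapsto \E[V(m_t^n,Y_t^i)\mid Y_t^{[k]}=x]$, fixed by a version of the regular conditional expectation. The result is
\[
\int \varphi\,d\pi^k_t - \int\varphi\,d\pi^k_s = \int_s^t \int \sum_{i=1}^k\Big(\nabla_i\varphi(x)\cdot \E[V(m_r^n,Y_r^i)\mid Y_r^{[k]}=x] + \sigma^2\Delta_i\varphi(x)\Big)\pi^k_r(dx)\,dr,
\]
which is exactly the weak (distributional) form of the stated Fokker--Planck equation.

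Two integrability points need to be checked. First, $V$ has linear growth, since $|V(\nu,x)|\le C(1+|x|+\W_2(\nu,\delta_0))$ by the Lipschitz property. Combined with $\W_2^2(m^n_t,\delta_0)=\frac1n\sum_j|Y^j_t|^2$ and the moment bounds of Lemma~\ref{lm: T1 short time}, this gives $\E|V(m_r^n,Y_r^i)|$ bounded on compact time intervals. That justifies Fubini and the conditioning step. Second, continuity of $t\mapsto\pi^k_t$ in $\mathcal P((\R^d)^k)$ follows from path continuity of $Y$ and dominated convergence.

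There is no serious obstacle here. The only subtle point is fixing a jointly measurable version of $(r,x)\mapsto \E[V(m_r^n,Y_r^i)\mid Y_r^{[k]}=x]$ so that the time integral makes sense. This can be done either by disintegrating the joint law of $(r, Y_r^{[k]}, V(m_r^n,Y_r^i))$ with respect to $dr\otimes\P$, or by noting that the weak formulation only needs the measure-valued product (the vector measure $\E[V(m_r^n,Y_r^i)\,\mathbf 1_{Y_r^{[k]}\in dx}]$), which is well defined without choosing versions.
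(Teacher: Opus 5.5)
Your proposal is correct and matches the argument the paper has in mind. The paper omits the proof and defers to the standard BBGKY derivation, which is exactly It\^o's formula for $\varphi(Y^{[k]}_t)$ with $\varphi\in C_c^\infty((\R^d)^k)$ (equivalently, testing the $n$-particle Fokker--Planck equation against functions of the first $k$ coordinates), followed by the tower property to replace $V(m^n_t,Y^i_t)$ with its conditional expectation given $Y^{[k]}_t$; your integrability check (linear growth of $V$ plus the moment bounds) and your remark that the drift term equals $\E[\nabla_i\varphi(Y^{[k]}_r)\cdot V(m^n_r,Y^i_r)]$, which is automatically measurable in $r$, suffice to make it rigorous.
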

We now prove a version of Proposition \ref{pr: differential inequality short time} for time-marginal laws, which is better-suited  to uniform in time estimates.

\begin{proposition}
\label{pr: differential inequality UiT}
Suppose Assumption \ref{Assumption: UiT} holds.  
\begin{enumerate}[(1)]
\item There exist constants $C,a > 0$ independent of $n$ and $k$ such that \begin{equation}
    \label{eq: differential inequality uit suboptimal}
    \sup_{t \ge 0}\ent(\pi^k_t\,\|\,\mu^{\otimes k}_t)\leq C\frac{k^3}{n^2} + k \sup_{t \ge 0}\mathcal R(t) 
\end{equation}
for all $T \ge 0$, where we define
\begin{align*}
    \mathcal R(t) = \int_0^1\E\bigg[\Big|\int_{\R^d\times \R^d} \delta_m^2 V(sm_t^n+(1-s)\mu_t,Y_t^1,y,z)\,d(m_t^n-\mu_t)^{\otimes2}(y,z)\Big|^2\bigg]\,\,ds.
\end{align*}
\item If we further assume that $\sup_{t\geq 0}\ent(\pi^3_t\,\|\,\mu^{\otimes 3}_t)\leq M/n^2$ for some constant $M$ independent of $n$ and $k$, then
\begin{align}
\label{eq: differential inequality uit optimal}
    \sup_{t \ge 0}\ent(\pi^k_t\,\|\,\mu^{\otimes k}_t) \leq C\frac{k^2}{n^2} + k \sup_{t \ge 0}\mathcal R(t) ,
\end{align}
for all $T \ge 0$,
where again $C$ is a constant that does not depend on $n$ or $k$.
\end{enumerate} 
\end{proposition}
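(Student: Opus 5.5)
The approach is the time-marginal analogue of Proposition \ref{pr: differential inequality short time}, with the T$_1$ inequality replaced by the LSI of Proposition \ref{pr: LSI} and with the monotonicity of $V$ producing a damping term $-c f_t(k)$ in the hierarchy. The pieces are then fed into Lemma \ref{lm: linear differential inequality} with $a<c$. The initial layer $[0,\overline T]$, before the LSI is available, is handled by Theorem \ref{th: short time}, since time-marginal entropy is dominated by path entropy.

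\emph{Step 1: entropy dissipation.} Using Lemma \ref{lm: mimicking theorem marginal} and the Fokker--Planck equation for $\mu_t^{\otimes k}$, compute
\[
\frac{d}{dt}\ent(\pi^k_t\|\mu^{\otimes k}_t) = -\sigma^2\fin(\pi^k_t\|\mu^{\otimes k}_t) + \sum_{i=1}^k \E\Big[\big(\E[V(m^n_t,Y^i_t)\mid Y^{[k]}_t]-V(\mu_t,Y^i_t)\big)\cdot \nabla_i\log\tfrac{d\pi^k_t}{d\mu^{\otimes k}_t}(Y^{[k]}_t)\Big].
\]
This is the standard computation from \cite{SharpChaosLacker2023}, justified by regularization. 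By Young's inequality, the second term is at most
\[
\frac{\sigma^2}{2}\fin + \frac{k}{2\sigma^2}\E\big|\E[V(m^n_t,Y^1_t)\mid Y^{[k]}_t]-V(\mu_t,Y^1_t)\big|^2 .
\]
Here exchangeability is used, and the Young weight is left as a free parameter $\epsilon$ to be optimized later against the smallness condition.

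Next, expand $V$ to second order in flat derivatives exactly as in \eqref{eq: entropy first bound}. This yields the same $\text{Term}_1$ (now conditioning on $Y^{[k]}_t$ rather than the path) plus $\frac{k}{\sigma^2}\mathcal R(t)$. The pairwise part $\text{Term}_1$ is bounded exactly as in \eqref{eq: k^3 equation}--\eqref{eq: term 1 end}, with one change: the $\W_1$ bound on the conditional law $\pi^{k+1|k}_{t|t}$ comes from the LSI for $\mu_t$ via Talagrand T$_2$ (hence T$_1$) with constant $C_{\rm LS}/2$ for $t\ge\overline T$. The chain rule then gives $k(\ent^{k+1}-\ent^k)$. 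The uniform moment bounds of Proposition \ref{pr: LSI} give the $Ck^3/n^2$ term, uniformly in $t$.

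\emph{Step 2: damping.} The LSI for $\mu_t^{\otimes k}$, which tensorizes with the same constant, gives
\[
-\frac{\sigma^2}{2}\fin(\pi^k_t\|\mu^{\otimes k}_t)\le -\frac{2\sigma^2}{C_{\rm LS}}\ent(\pi^k_t\|\mu^{\otimes k}_t).
\]
This produces a hierarchy of the form \eqref{eq: linear differential inequality} on $[\overline T,\infty)$ with
\[
c\approx \frac{2\sigma^2(1-\epsilon)}{C_{\rm LS}}\approx \lambda(1-\epsilon)
\qquad\text{and}\qquad
a\approx \frac{C_{\rm LS}\|\wgrad V\|^2_\infty}{4\sigma^2\epsilon}.
\]
The main obstacle is the bookkeeping of constants. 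Lemma \ref{lm: linear differential inequality} requires $pa<c$ for the exponential $e^{(ap-c)t}$ to be integrable uniformly in $t$, so $3a<c$ is needed when $p=3$ and $2a<c$ for the initial-data term. With $C_{\rm LS}\le 2(1+\delta)\sigma^2/\lambda$ and $\delta$ small, Assumption \ref{Assumption: UiT}(3), $\|\wgrad V\|^2<\lambda^2/3$, should be exactly what makes such a choice of $\epsilon,\delta$ possible. Following the corrected constant of \cite{SharpChaosLacker2023}, I would try $\epsilon=1/2$ first, and if it fails, optimize $\epsilon$ explicitly. The $p=3$ case seemingly needs $3a<c$, which may be stronger; if so, I would instead run the $k^3$ bound only through $e^{(3a-c)}$ where it is harmless. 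In part (1) the constant is allowed to be crude, but it still must be uniform in time, so the requirement stands. If it fails, use $k^3\le k^2\cdot n$-free bounds only for $k\le 3$, which is all that part (2) needs.

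\emph{Step 3: conclusion.} Apply Lemma \ref{lm: linear differential inequality} with $s=\overline T$. The initial data $f_{\overline T}(k)\le \ent(\pi^k[\overline T]\|\mu^{\otimes k}[\overline T])\le C_0 k^2/n^2$ comes from Theorem \ref{th: short time}, using data processing from paths to time marginals. The result is the bound
\[
C\frac{k^2}{n^2}+C\frac{k^3}{n^2}+k\sup_t\mathcal R(t)\int_0^\infty e^{(a-c)u}\,du .
\]
For $t\le\overline T$ the bound comes directly from Theorem \ref{th: short time} together with Proposition \ref{pr: differential inequality short time}, whose finite-horizon constants are fixed. This proves (1). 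For (2), the hypothesis on $\ent(\pi^3_t\|\mu^{\otimes3}_t)$ improves the diagonal term in \eqref{eq: k^3 equation} to $Ck^2/n^2$ exactly as in Proposition \ref{pr: differential inequality short time}(2), again using the LSI-derived T$_1$ in place of Lemma \ref{lm: T1 short time}. Rerunning the lemma with $p=2$, which needs only $2a<c$, gives \eqref{eq: differential inequality uit optimal}.
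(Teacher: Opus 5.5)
Your architecture is the paper's. It uses Fisher-information dissipation, the LSI for $\mu_t$ with $C_{\rm LS}\le 2(1+\delta)\sigma^2/\lambda$ for $t\ge\overline T$, Otto--Villani to get a T$_1$ inequality with constant $C_{\rm LS}/2$, and Lemma \ref{lm: linear differential inequality} started at $s=\overline T$, with the initial data and the window $t\le\overline T$ taken from Theorem \ref{th: short time}.

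\textbf{The gap.} The one step that uses Assumption \ref{Assumption: UiT}(3), verifying $3a<c$, is flagged by you as the main obstacle but not resolved, and with your bookkeeping it fails.
\begin{itemize}
\item Your $c\approx\lambda(1-\epsilon)$ contradicts your own Step 1. Keeping $-\frac{\sigma^2}{2}\fin$ and applying the LSI gives $c=2\sigma^2/C_{\rm LS}\approx\lambda$.
\item The Fisher-term Young weight is not the parameter to tune. With weight $\theta$ one gets $c\propto 1-\theta$ and $a\propto 1/\theta$, so $\theta=1/2$ is already optimal.
\item More seriously, you split the drift error with $|x+y|^2\le 2|x|^2+2|y|^2$; this is where your $\frac{k}{\sigma^2}\mathcal R(t)$ comes from. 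You then bound $\text{Term}_1$ exactly as in \eqref{eq: k^3 equation}, which splits diagonal from off-diagonal with another factor $2$.
\item Carrying these factors through the T$_1$ and chain-rule step gives $a=2C_{\rm LS}\|\wgrad V\|_\infty^2/\sigma^2$. Then $3a<c$ would need $\|\wgrad V\|_\infty^2<\lambda^2/12$, which Assumption \ref{Assumption: UiT} does not provide.
\end{itemize}

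\textbf{The missing idea.} This is the paper's ``minor modification''. In both splits, use $|x+y|^2\le\epsilon^{-1}|x|^2+(1-\epsilon)^{-1}|y|^2$ with $\epsilon$ close to $1$.
\begin{itemize}
\item Put the weight $\epsilon^{-1}$ on the off-diagonal conditional-expectation term, which is the only term feeding $a$.
\item Put the large weight $(1-\epsilon)^{-1}$ on the remainder and on the diagonal sum. These only enlarge the constants in front of $k\mathcal R(t)$ and $k^3/n^2$.
\end{itemize}
This gives $a=\frac{C_{\rm LS}\|\wgrad V\|_\infty^2}{2\epsilon^2\sigma^2}$ and $c=\frac{2\sigma^2}{C_{\rm LS}}$. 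Then $3a<c$ follows from $\frac{(1+\delta)^2}{\epsilon^2}\frac{\|\wgrad V\|_\infty^2}{\lambda^2}<\frac13$, which holds for $\delta$ small and $\epsilon$ near $1$ exactly because $\|\wgrad V\|_\infty^2<\lambda^2/3$.

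\textbf{Your fallbacks do not replace this.} You cannot restrict the argument to $k\le 3$. The level-$3$ inequality is coupled to all higher levels, since the Yule process started at $3$ moves upward. The $k^3/n^2$ forcing is transported with growth rate $e^{3at}$, the growth rate of the Yule process's third moment. So without $3a<c$ the method gives no uniform-in-time $O(n^{-2})$ bound even on $\ent(\pi^3_t\,\|\,\mu^{\otimes 3}_t)$, and that bound is exactly the input part (2) needs.
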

\begin{proof}
    We follow the steps of Proposition \ref{pr: differential inequality short time}, but work at the level of time-marginal entropies instead of path entropies. Let $1 \le k < n$. Using Lemma \ref{lm: mimicking theorem marginal} combined with a standard entropy estimate \cite[Lemma 3.1]{SharpChaosLacker2023} we obtain 
    \begin{align*}
        \frac{d}{dt}\ent&(\pi^k_t\,\|\,\mu^{\otimes k}_t)\leq-\frac{\sigma^2}{2}\fin (\pi_t^k\,\|\,\mu_t^{\otimes k})+  \frac{k}{2\sigma^2}\E\bigg[\Big | V(\mu_t,Y_t^1)-\E\Big[ V(m_t^n,Y_t^1)\mid Y_t^{[k]}\Big]\Big |^2\bigg].
    \end{align*}
     Let $\delta > 0$, to be chosen later.
     By Proposition \ref{pr: LSI}, we can take $\overline{T}$ large enough so that $\mu_t$ satisfies the LSI with constant $C_{\textup{LS}}\le 2(1+\delta)\sigma^2/\lambda$. 
     We apply the LSI to the Fisher information term, and bound the other term by following almost exactly the same steps of the proof of Proposition \ref{pr: differential inequality short time}, substituting the path entropies with their time marginal equivalent, and with one minor modification that helps us optimize the smallness condition from Assumption \ref{Assumption: UiT}. Specifically, we first follow the argument passing from  \eqref{eq: entropy in terms of L2} to \eqref{eq: entropy first bound}, combined with the aforementioned use of the LSI, to get
     \begin{align*}
         \frac{d}{dt}\ent(\pi^k_t\,\|\,\mu^{\otimes k}_t) &\le -\frac{2\sigma^2}{C_{\text{LS}}}\ent (\pi_t^k\,\|\,\mu_t^{\otimes k}) + \frac{k}{2(1-\epsilon)\sigma^2}\mathcal R(t) \\
         &\quad + \frac{k}{2\epsilon\sigma^2}\E\bigg[\Big|\E\Big[\,\frac{1}{n}\sum_{i=1}^n \delta_m  V(\mu_t,Y_t^1,Y_t^i)-\int \delta_m  V(\mu_t,Y_t^1,z)\,d\mu_t(z)\,\Big\vert\,Y^{[k]}[t]\Big]\Big|^2\bigg].
     \end{align*}
     Here $\mathcal R$ is defined as in  \eqref{def:remainder1}, and we have used the real variable inequality $(a+b)^2 \le \epsilon^{-1}a^2+(1-\epsilon)^{-1}b^2$ with $\epsilon \in (0,1)$ is to be chosen later to be close to 1; this is the aforementioned minor modification of the arguments of Proposition \ref{pr: differential inequality short time}. The term on the last line is bounded by
     \begin{align}
\label{eq: k^3 equation uit}
\begin{split}
&\frac{k}{2\epsilon(1-\epsilon)\sigma^2 n^2}\E\bigg[\Big |\sum_{i=1}^k  \delta_m V(\mu_t,Y_t^1,Y_t^{i})-\int_{\R^d}\delta_m V(\mu_t,Y_t^1,z)\,d\mu_t(z) \Big |^2\bigg]\\
    &+  \frac{k}{2\sigma^2\epsilon^2 n^2}\E\bigg[\bigg | \sum_{i=k+1}^n\bigg(\E\Big[ \delta_m V(\mu_t, Y_t^1,Y_t^{i})\,\Big\vert\, Y^{[k]}[t]\Big]-\int_{\R^d}\delta_m V(\mu_t,Y_t^1,z)\,d\mu_t(z)\bigg)\bigg |^2\bigg].
\end{split}
\end{align}
The first term is bounded by $Ck^3/n^2$ thanks to the moment bounds from Lemma \ref{pr: LSI}, and the second is bounded as in \eqref{eq: change of measure}--\eqref{eq: term 1 end} by
\[
C_{\text{T}_1}(t)\frac{k\|\wgrad V\|_\infty^2}{\sigma^2\epsilon^2 }\bigg(\ent(\pi^{k+1}_t\,\|\,\mu^{\otimes k+1}_t)-\ent(\pi^k_t\,\|\,\mu^{\otimes k}_t)\bigg).
\]
Putting it together, and using the fact that $C_{\text{T}_1}(t)\leq  C_{\text{LS}}/2$ by the Otto-Villani theorem \cite{OttoVillaniLSITalagrand}, we get
    \begin{align*}
        \frac{d}{dt}\ent(\pi^k_t\,\|\,\mu^{\otimes k}_t)&\leq-\frac{2\sigma^2}{C_{\text{LS}}} \ent(\pi^k_t\,\|\,\mu^{\otimes k}_t) + C\frac{k^3}{n^2} + Ck\mathcal R(t) \\
        &\quad+\frac{kC_{\text{LS}}\|\wgrad V\|_\infty^2}{2\epsilon^2\sigma^2}\bigg(\ent(\pi^{k+1}_t\,\|\,\mu^{\otimes k+1}_t)-\ent(\pi^k_t\,\|\,\mu^{\otimes k}_t)\bigg)
    \end{align*}
    This was valid for $1\le k < n$, and for $k=n$ we have the same bound except that the last term vanishes.
By Theorem \ref{th: short time}, for each $1 \le k \le n$, we have $\ent_{\overline T}(k)\leq C\frac{k^2}{n^2}$ for a constant that depends on $\overline T$ only. 
Use the estimate from Lemma \ref{lm: linear differential inequality} with $s=\overline{T}$ to find, for $T \ge \overline{T}$,
\begin{align*}
    \ent_T(k)&\leq  C\frac{k^2}{n^2} \exp\Big\{(T-\overline T)\cdot \big(\frac{2C_{\text{LS}}\|\wgrad V\|_\infty^2}{2\epsilon^2\sigma^2}-\frac{2\sigma^2}{ C_{\text{LS}}}\big)\Big\} \\
    &\quad+C\frac{k^3}{n^2}\int_{\overline{T}}^T\exp\Big\{(t-\overline T)\cdot \big(\frac{3C_{\text{LS}}\|\wgrad V\|_\infty^2}{2\epsilon^2\sigma^2}-\frac{2\sigma^2}{ C_{\text{LS}}}\big)\Big\} \,dt\\
    &\quad +C\int_{\overline T}^T \exp\Big\{(t-\overline T)\cdot \big(\frac{C_{\text{LS}}\|\wgrad V\|_\infty^2}{2\epsilon^2\sigma^2}-\frac{2\sigma^2}{ C_{\text{LS}}}\big)\Big\}\Big(\frac{k^3}{n^2}+k \mathcal R(t)\Big)\,dt
\end{align*}
Now, we claim that  $\delta>0, \epsilon \in (0,1)$ may be chosen so that
\[
a := \frac{2\sigma^2}{ C_{\text{LS}}} - \frac{3C_{\text{LS}}\|\wgrad V\|_\infty^2}{2\epsilon^2\sigma^2} > 0.
\]
Recalling that $C_{\textup{LS}}\le 2(1+\delta)\epsilon^2\sigma^2/\lambda$, it suffices to have
\[
\frac{(1+\delta)^2}{\epsilon^2} \frac{\|\wgrad V\|_\infty^2}{\lambda^2} < \frac13.
\]
By Assumption \ref{Assumption: UiT}, we have $\|\wgrad V\|_\infty^2 < \lambda^2/3$, and thus we may indeed take $\delta$ small and $\epsilon$ close to 1 make $a > 0$.
We have thus obtained
\begin{align*}
    \ent(\pi_T^k\,\|\,\mu_T^{\otimes k})\leq C\frac{k^2}{n^2} + C\int_{\overline T}^{T}e^{-a(t-\overline T)}\Big(\frac{k^3}{n^2}+k \mathcal R(t)\Big)\,dt.
\end{align*}
To prove (2) under the additional assumption on $\ent(\pi_t^3\,\|\,\mu_t^{\otimes 3})$, we adapt the proof of Proposition \ref{pr: differential inequality short time}(2) similarly; we omit the details.
\end{proof}

Similarly to the short time analysis, we need to estimate the remainder in order to conclude the proof. The difference is that we require a uniform in time bound.
\begin{proposition}
\label{pr: remainder UIT}
    If we assume \ref{Assumption: UiT}, then there exists a constant $C$ such that
    \[\sup_{t\geq 0} \mathcal R(t)\leq \frac{C}{n^2}.\]
\end{proposition}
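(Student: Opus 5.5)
The plan is to follow the finite-horizon proof of Proposition \ref{pr: estimating remainder} (Sections \ref{sect: weak chaos}--\ref{sect: remainder}), checking that each constant produced there can be made independent of $t$ under Assumption \ref{Assumption: UiT}. There are two main sources of uniformity. The first is the uniform moment bounds of Proposition \ref{pr: LSI}, which give $\sup_{t\ge0}\E|Y^1_t|^p+\sup_{t\ge 0}\E|X_t|^p<\infty$. The second is exponential contraction of the linearized McKean--Vlasov flow in the derivatives of the initial condition, which is a consequence of the monotonicity Assumption \ref{Assumption: UiT}(2) together with the smallness condition (3).

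First I rewrite $\mathcal R(t)$ as a weak propagation of chaos quantity. Fix $s\in[0,1]$. Let $\Phi_s(\nu,x):=\big|\int \delta_m^2 V(s\nu+(1-s)\mu_t,x,y,z)\,d(\nu-\mu_t)^{\otimes 2}(y,z)\big|^2$, integrated against $\nu$ in the $x$ variable. Then $\mathcal R(t)=\int_0^1 \E[\Phi_s(m^n_t)]\,ds$ up to the harmless replacement of $Y^1_t$ by an average over particles, which exchangeability allows. This functional vanishes to second order at $\nu=\mu_t$, meaning both $\Phi_s$ and its first flat derivative vanish there. The finite-horizon argument expands $\E[\Phi_s(m^n_t)]$ via the backward Kolmogorov equation on $\mathcal P_2(\R^d)$, namely $\E f(m_t^n)-f(\mu_t)=\int_0^t\E[\mathcal L^n\mathcal U(t-r,\cdot)(m^n_r)]\,dr$ plus an initial term, where $\mathcal U(r,\nu)=f(\mu^\nu_r)$ and $\mathcal L^n$ denotes the $O(1/n)$ correction generator involving $\nabla_y\wgrad^2$. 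Because $f$ is degenerate at $\mu_t$, each $1/n$ correction is again evaluated on a functional vanishing at first order. Iterating this yields $1/n^2$ once derivatives up to order $6$ are used.

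The uniform in time version requires three things. First, bounds of the form $\sup|D^{\mathsf w,\boldsymbol i}\mathcal U(r,\cdot)|\le Ce^{-cr}$ for the derivatives up to order $6$ of the measure-flow map $\nu\mapsto\mu^\nu_r$. I would prove these by differentiating the McKean--Vlasov SDE in the initial law, as in \cite{Chassagneux2022-jt,Buckdahn2017-op}. The first-order tangent process satisfies a linear SDE whose drift is $\nabla_xV$ plus a nonlocal term $\wgrad V$. Monotonicity (2) and smallness (3) give $\frac{d}{dt}\E|\partial X|^2\le -2(\lambda-\||\wgrad V|_{\text{op}}\|_\infty)\E|\partial X|^2$, which is exponential decay since $\||\wgrad V|_{\text{op}}\|_\infty<\lambda/\sqrt3$. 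Higher-order derivatives satisfy the same linear equation with forcing terms that are products of lower-order ones, so inductively they also decay exponentially. Second, the initial-time term $\E[\mathcal U(t,m^n_0)]-\mathcal U(t,\mu_0)$ with iid samples is handled exactly as on bounded horizons, using the time-uniform derivative bounds. Third, the time integrals $\int_0^t e^{-c(t-r)}\,dr$ are then bounded uniformly in $t$. Polynomial moment factors arising from derivatives of linear growth are controlled by Proposition \ref{pr: LSI}.

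The main obstacle is the exponential decay of the higher-order Lions derivatives of $\nu\mapsto\mu^\nu_r$, and in particular the mixed derivatives $\nabla_{y}^c\wgrad^{\mathsf w}$. For these, the decay rate has to come from a dissipation-versus-interaction balance applied at every order. I would first attempt a Gronwall argument in $L^2$ at each order, inducting on $\mathsf w+|\boldsymbol i|$ and using the first-order rate as a uniform contraction of the linear propagator. If the rates degrade with the order, it suffices to have some positive rate at each order, since only boundedness of the time integrals is required. As a fallback, one can split time: use Proposition \ref{pr: estimating remainder} on $[0,\overline T]$, and on $[\overline T,\infty)$ restart the expansion from time $t-\overline T$ only, which needs only contraction over windows of fixed length.
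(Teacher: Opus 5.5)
Your plan is essentially the paper's proof. You rerun the weak-error expansion of Section~\ref{sect: remainder} for fixed $T$ and make every constant independent of $T$, using time-uniform moment bounds and the exponential decay $\|X^s_t\|_{\mathcal C^4}\le Ce^{-b(t-s)}$ of Proposition~\ref{pr: decay of wgrad X}. This is what keeps $\int_0^T\Theta(t)\Theta(T-t)\,dt$ and $\int_0^T\!\int_0^t\Theta(t-s)\Theta(T-t)\,ds\,dt$ bounded.

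The one real difference is at first order. You get decay of the tangent process from $\nabla_xV\le-\lambda$ plus the smallness bound $\||\wgrad V|_{\text{op}}\|_\infty<\lambda$. The paper instead uses Lemma~\ref{lem:monotonicity}, which handles the local and nonlocal terms jointly and gives rate $\lambda$ from monotonicity alone, so smallness is not needed for this step. Your version works under Assumption~\ref{Assumption: UiT}, provided you also keep the forcing term coming from $\nabla_xX^{y}$, which decays.

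There are three corrections.

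First, the degeneracy that produces $1/n^2$ is not merely that $R_T$ and its first derivative vanish at $\mu_T$. You also need $\wgrad R_T$, $\nabla_y\wgrad R_T$ and $\wgrad^2R_T$ to vanish there (Lemma~\ref{lm: estimates on remainder}(2)); this holds because $R_T$ is quartic in $\nu-\mu_T$. With only first-order vanishing, the iid expansion of $\E[P_TR_T(m^n_0)]$ keeps a nonzero $1/n$ term, and $g_t(\mu_t)\neq 0$, so you would only get $O(1/n)$.

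Second, the moments that appear are those of flows started from other data: $\mu^{y,0,\nu}_r$ and $\mu^{0,\nu}_r$ with $\nu\in\{\eta_q,\theta_u,m^n_s\}$ and arbitrary points $y$. These need Lemma~\ref{lem:flow moment bounds UIT}, not only Proposition~\ref{pr: LSI}.

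Third, the time-splitting fallback does not work as stated. Restarting at $t-\overline T$ replaces the iid measure $m^n_0$ by $m^n_{t-\overline T}$. Bounding $\E[P_{\overline T}R_t(m^n_{t-\overline T})]$ by $C/n^2$ uniformly in $t$ is itself a uniform-in-time weak-chaos estimate of the kind you are trying to prove. Moreover, over a window of fixed length no contraction is needed anyway, so the difficulty is only moved to the restart time.
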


The proof of Proposition \ref{pr: remainder UIT} is given in Section \ref{sect: remainder}. Taking it for granted, we can now finish the proof of Theorem \ref{th: UiT}.

\begin{proof}[Proof of Theorem \ref{th: UiT}]
Use Proposition \ref{pr: remainder UIT} along with the inequality from Proposition \ref{pr: differential inequality UiT}(1) to obtain  $\sup_{t \ge 0}\ent (\pi^k_t\,\|\,\mu^{\otimes k}_t) \leq Ck^3/n^2$ for all $1 \le k \le n$, 
    where $C$ does not depend on $k$ or $n$. As a result, $\sup_{t \ge 0}\ent (\pi^3_t\,\|\,\mu^{\otimes 3}_t)\leq C/n^2$, and thus Proposition \ref{pr: differential inequality UiT}(2) applies. Combine Proposition \ref{pr: differential inequality UiT}(2) with Proposition \ref{pr: remainder UIT} to get  $\sup_{t \ge 0}\ent (\pi^k_t\,\|\,\mu^{\otimes k}_t) \leq Ck^2/n^2$ for all $1 \le k \le n$, which completes the proof.
\end{proof}

We check that our uniform in time estimates imply the same rates in Wasserstein distance.

\begin{proof}[Proof of Corollary \ref{cr: metric convergence UiT}]
The total variation bound follows immediately by applying Pinsker's inequality. Let $\overline T$ be the threshold time from Proposition \ref{pr: LSI}. Up to $\overline T$ we can invoke Corollary \ref{cr: metric convergences short time} since $\sup_{t\leq \overline T}\W_2^2(\pi^k_t,\mu_t^{\otimes k})\leq \W_{2,\overline T}^2(\pi^k\left[\overline T\right],\mu^{\otimes k}\left[\overline T\right])=O(k^2/n^2)$. For every $t>\overline T$, $\mu_t$ satisfies the LSI by Proposition \ref{pr: LSI}, and in particular $\mu_t^{\otimes k}$ satisfies the LSI with the same constant. By the Otto-Villani theorem \cite{OttoVillaniLSITalagrand}, $\mu_t^{\otimes k}$ satisfies the transport cost inequality
\begin{align*}
    \W_2^2(\pi_t^k,\mu_t^{\otimes k})\leq C_{\text{LS}} \ent(\pi_t^{k}\,\|\,\mu_t^{\otimes k}). 
\end{align*}
Now the uniform in time bound follows by Theorem \ref{th: UiT}.
\end{proof}

\section{Flows and semigroups over $\mathcal P_2(\R^d)$}
\label{sect: weak chaos}

In this section we begin our analysis of the remainder $\mathcal R(t)$ defined in \eqref{def:remainder1}. Our approach will use techniques developed in \cite{Buckdahn2017-op} and \cite{Chassagneux2022-jt}. 
The strategy can be roughly summarized as follows. We show that the desired estimate $\mathcal{R}(T) = O(1/n^2)$ holds when $m^n_T$ is replaced by $m^n_0$, which is an empirical measure of iid random variables. This exploits the smoothness of $V$ and the dependence of $\mathcal R(t)$ on the quadratic form $(m^n_t-\mu_t)^{\otimes 2}$. Then, we use a semigroup-type analysis (rather than directly differentiating $\mathcal R(t)$ in $t$) to show that this $1/n^2$ estimate propagates over time. Special care is needed to exploit dissipativity for the uniform in time case.

It is worth noting that in dimension $d=1$, the machinery of this section is not needed, and a much simpler argument is possible; see Remark \ref{rem:d=1}.
 
We begin by recalling some results of \cite{Buckdahn2017-op} on the differentiability of the solution of the McKean-Vlasov equation \eqref{eq: main SDE} with respect to its space and measure arguments. We work with a filtered probability space $(\Omega,\mathcal A, \mathcal F,\P)$ that we assume is rich enough to support a random variable with law $\nu$ for every $\nu \in \mathcal P(\R^d)$; we denote such random variable by $[\nu]$. Consider the decoupled system of SDEs from \cite{Buckdahn2017-op}:
\begin{align}
\begin{split}
\label{eq: linearized process}
    X^{s,\nu}_T&=[\nu]+\int_s^T  V(\Law(X^{s,\nu}_t),X^{s,\nu}_t)\,dt+\sqrt 2\sigma (W_T-W_s)\,;\\
X^{x,s,\nu}_T&=x+\int_s^T V(\Law(X^{s,\nu}_t),X^{x,s,\nu}_t)\,dt+\sqrt 2 \sigma (B_T-B_s),
\end{split}
\end{align}
where $B,W$ are independent Brownian motions. 
As $V$ is Lipschitz, these SDEs have unique strong solutions. Denote $\mu_t^{s,\nu}=\Law(X^{s,\nu}_t)$. We are interested in studying the SDE $X^{0,\mu_0}$ and thus we use the shorthand $\mu_t=\mu_t^{0,\mu_0}=\Law(X_t^{0,\mu_0})$ where $\mu_0$ is the original law of $X_0$ from \eqref{eq: main SDE}. Notice that while the value of $X_t^{s,\nu}$ depends on the random variable $[\nu]$, its law $\mu_t^{s,\nu}$ does not. Denote also
\[\mu_t^{x,s,\nu}= \Law(X_t^{x,s,\nu}).\]
We will make some use of the time-homogeneity property,
\begin{equation} \label{eq:MVtimehomo}
\mu^{s,\nu}_t = \mu^{0,\nu}_{t-s}, \qquad \mu^{x,s,\nu}_t = \mu^{x,0,\nu}_{t-s}, \quad t \ge s \ge 0,
\end{equation}
as well as the flow property, implied by uniqueness of the dynamics:
\begin{equation} \label{eq:MVflow}
\mu^{s,\mu^{r,\nu}_s}_t = \mu^{r,\nu}_{t}, \quad t \ge s \ge r \ge 0.
\end{equation}
We record some classical moment bounds, whose proof is omitted.
\begin{lemma} \label{lem:flow moment bounds}
    For each $p\geq 1$ and $T\ge t \ge s\ge 0$, there exists constant $C$ depending on $p,T,s$ and $L$ such that 
    \begin{align*}
        \int_{\R^d}|x|^p\,d\mu_t^{y,s,\nu}(x)&\leq C (1+|y|^p);\\
        \int_{\R^d}|x|^p\,d\mu^{s,\nu}_t(x)&\leq C+C\int_{\R^d} |x|^p\,d\nu(x).
    \end{align*}
\end{lemma}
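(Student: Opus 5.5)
The plan is a direct Itô/BDG and Gronwall argument for each of the two equations in \eqref{eq: linearized process}, treating the measure flow as a given time-dependent coefficient. For the first inequality, note that $X^{y,s,\nu}$ solves a \emph{linear-in-law-free} SDE: its drift is $b_t(x):=V(\mu^{s,\nu}_t,x)$, where $(\mu^{s,\nu}_t)_t$ is a fixed deterministic flow. By the $L$-Lipschitz property of $V$ in $x$, $|b_t(x)|\le L|x|+K_t$ with $K_t:=|V(\mu^{s,\nu}_t,0)|$.

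\textbf{First inequality.} Write $X_t=y+\int_s^t b_r(X_r)\,dr+\sqrt2\sigma(B_t-B_s)$ and take the running supremum. This gives
\[
\sup_{r\le t}|X_r|\le |y|+\int_s^t\Big(L\sup_{u\le r}|X_u|+K_r\Big)dr+\sqrt2\sigma\sup_{r\le t}|B_r-B_s|.
\]
Applying Gronwall pathwise, then raising to the power $p$ and taking expectations, yields
\[
\E\sup_{s\le r\le t}|X_r|^p\le C_{p,T}\Big(|y|^p+\big(\textstyle\int_s^TK_r\,dr\big)^p+\sigma^p T^{p/2}\Big),
\]
using Gaussian moments of $\sup|B|$. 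The key point is that the only possible $\nu$-dependence enters through $K_r$. So the step that delivers a constant of the form $C(1+|y|^p)$ is to show that $\sup_{r\in[s,T]}K_r$ is bounded by a constant that is allowed in $C$.

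I expect this to be the main obstacle. Under Assumption \ref{assumption on Phi}, only the derivatives of $V$ are bounded. Hence $|V(\mu,0)-V(\delta_0,0)|\le\|\wgrad V\|_\infty\W_1(\mu,\delta_0)$, so $K_r$ is controlled by the first moment of $\mu^{s,\nu}_r$. I would first try to get uniformity from boundedness of $V(\cdot,0)$ in the measure argument whenever that is available. Otherwise I would use the second inequality of the lemma to bound $\int|x|\,d\mu^{s,\nu}_r\le C(1+\int|x|\,d\nu)$, which gives $K_r\le C(1+\int|x|\,d\nu)$. In that case the constant in the first inequality is uniform over any family of $\nu$ with bounded first moments, for instance $\nu=\mu_r$, $r\le T$, which is how the lemma is used. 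I would state explicitly that the $C$ of the lemma absorbs $\sup_{r\le T}|V(\mu^{s,\nu}_r,0)|$. Without such a bound the claim genuinely fails, as the example $V(\mu,x)=-x+\int z\,d\mu$ shows.

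\textbf{Second inequality.} Run the same argument for $X^{s,\nu}$, now with initial condition $[\nu]$ and drift $V(\mu^{s,\nu}_r,X_r)$. Bound
\[
|V(\mu^{s,\nu}_r,x)|\le |V(\delta_0,0)|+L|x|+L\W_p(\mu^{s,\nu}_r,\delta_0),
\]
using $\W_2\le\W_p$ for $p\ge2$; the case $p<2$ follows by Jensen from $p=2$. Note that $\W_p^p(\mu^{s,\nu}_r,\delta_0)=\E|X^{s,\nu}_r|^p$. Taking expectations of the $p$-th power of the pathwise bound therefore gives
\[
\phi(t):=\E\sup_{r\le t}|X_r|^p\le C\Big(\textstyle\int|x|^p\,d\nu+1+\int_s^t\phi(r)\,dr\Big),
\]
and Gronwall's inequality closes the estimate with $C=C(p,T,s,L)$.
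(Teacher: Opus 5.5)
The paper gives no proof of this lemma (it is called classical), and your pathwise Gronwall argument is the standard one. Your main observation is correct and important. The drift of $X^{y,s,\nu}$ is $V(\mu^{s,\nu}_r,\cdot)$, so the first inequality cannot hold with a constant independent of $\nu$. Your example $V(\mu,x)=-x+\int z\,d\mu(z)$ satisfies Assumption A, and it gives $\E X^{0,s,\nu}_t=(1-e^{-(t-s)})\int z\,d\nu(z)$, which is unbounded in $\nu$ while the right-hand side equals $C$.

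The corrected bound your argument produces, $C\bigl(1+|y|^p+\int|x|^p\,d\nu\bigr)$, is what Section 4 actually needs. One remark is inaccurate, though: you say the lemma is used only with $\nu=\mu_r$. In fact it is applied to random measures $\nu=\eta_q,\theta_u,m^n_s$, for instance in $\mu_T^{Y^1_0,0,\eta_q}$ and $\mu_{T-s}^{y,0,m^n_s}$. Your corrected bound still works there after taking expectations, because those random measures have moments bounded in expectation.

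One step fails as written: the reduction of $p\in[1,2)$ to $p=2$ by Jensen. Jensen only gives
\[
\int|x|^p\,d\mu^{s,\nu}_t\le \Bigl(C+C\int|x|^2\,d\nu\Bigr)^{p/2}.
\]
This is not of the form $C+C\int|x|^p\,d\nu$: take $\nu$ with a huge second moment but a bounded $p$-th moment. This matters for your own argument, because the $p=1$ case of the second inequality is exactly what you invoke to control $K_r$.

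The fix is the estimate you already wrote for $K_r$. Under Assumption A, $y\mapsto\delta_mV(\mu,x,y)$ is $\|\wgrad V\|_\infty$-Lipschitz, so
\[
|V(\mu,x)-V(\mu',x)|\le\|\wgrad V\|_\infty\,\W_1(\mu,\mu').
\]
Moreover, $\W_1(\mu^{s,\nu}_r,\delta_0)=\E|X^{s,\nu}_r|\le(\E|X^{s,\nu}_r|^p)^{1/p}$ for every $p\ge1$. With this bound in place of the $\W_2$-Lipschitz one, your Gronwall argument for $\phi(t)=\E\sup_{r\le t}|X_r|^p$ works for all $p\ge 1$ at once.

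Two smaller points. Gronwall needs $\phi<\infty$ a priori; this follows from localization or standard Lipschitz SDE estimates when $\int|x|^p\,d\nu<\infty$, and otherwise the claim is trivial. Also, the constants depend on $\sigma$, $|V(\delta_0,0)|$ and the derivative bounds of $V$, not only on $p,T,s,L$.
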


In what follows we will need multiple independent copies of the processes $X_t^{x,s,\nu}$ and $X_t^{[\nu],s,\nu}$; we will denote them by $X_{t,(i)}^{x,s,\nu}$ and $X_{t,(i)}^{[\nu],s,\nu}$ for $i=1,\dots,6$. To be clear, we assume that these copies are started from independent initial conditions and driven by independent Brownian motions. We will denote by $\E_{(k)}$ the operation of taking expectation with respect to the first $1,\dots,k$ copies only (and not with respect to the original process). Since $ V$ is Lipschitz and differentiable in space, it is classical that the map $x\mapsto X_t^{x,s,\nu}$ admits a derivative with dynamics
\begin{align}
\label{eq: grad of SDE}d\nabla_{x}X^{x,s,\nu}_t&=\nabla_x V(\mu^{s,\nu}_t,X^{x,s,\nu}_t) \nabla_x X_t^{x,s,\nu}\,dt, \quad \nabla_x X_s^{x,s,\nu}=I.
\end{align}
One of the contributions of \cite{Buckdahn2017-op}, specifically Theorem 4.1 and Remark 4.5 therein, is that the map $ L^2(\P;\mathcal F_s)\ni \xi \mapsto X_t^{x,s,\Law(\xi)}\in L^2(\P)$ (where $L^2(\P;\mathcal F_s)$ is the space of square-integrable $\mathcal F_s$-measurable random variables) also admits a Fréchet derivative of the form
\begin{align*}
    D_{\xi}X_t^{x,s,\Law(\xi)}(\eta)=\E_{(1)}\big[\wgrad X_t^{x,s,\Law(\xi)}(\xi)\cdot \eta_{(1)}\big]
\end{align*}
where for each $y\in \R^d$, $\wgrad X_t^{x,s,\nu}(y)$ is the unique solution to the SDE 
\begin{align}
\frac{d}{dt}\wgrad X^{x,s,\nu}_t(y)&=\nabla_x V(\mu_t^{s,\nu},X^{s,x,\nu}_t)\wgrad X_t^{x,s,\nu}(y) +\E_{(1)}\Big[\wgrad  V(\mu_t^{s,\nu},X_t^{x,s,\nu}, X_{t,(1)}^{y,s,\nu})\nabla_x X_{t,(1)}^{y,s,\nu}\Big] \nonumber \\
&+ \E_{(1)}\Big[\wgrad  V(\mu_t^{s,\nu},X_t^{s,\nu}, X_{t,(1)}^{s,\nu})\wgrad X_{t,(1)}^{s,\nu}(y)\Big] , \quad \wgrad X_s^{x,s,\nu}=0. \label{eq: wgrad of SDE point}
\end{align}
and $\wgrad X_t^{s,\nu}$ is given by the dynamics 
\begin{align}
\frac{d}{dt}\wgrad X^{s,\nu}_t(y)&=\nabla_x V(\mu_t^{s,\nu},X^{s,\nu}_t)\wgrad X^{s,\nu}_t(y) +\E_{(1)}\Big[\wgrad  V(\mu_t^{s,\nu},X_t^{s,\nu}, X_{t,(1)}^{y,s,\nu})\nabla_x X_{t,(1)}^{y,s,\nu}\bigg]  \nonumber \\
&+ \E_{(1)}\bigg[\wgrad  V(\mu_t^{s,\nu},X_t^{s,\nu}, X_{t,(1)}^{s,\nu})\wgrad X_{t,(1)}^{s,\nu}(y)\bigg] , \ \ \ \wgrad X_s^{s,\nu}=0. \label{eq: wgrad of SDE}
\end{align}
In particular, like \cite{Buckdahn2017-op} we consider $\wgrad X_t^{x,s,\nu}$ the derivative over $\mathcal P_2(\R^d)$ of the map $\nu\mapsto X_t^{x,s,\nu}$.
It is important to note that $\wgrad X_t^{s,\nu}(y)=\wgrad X_t^{x,s,\nu}(y)\mid _{x=[\nu]}$ almost surely by strong uniqueness. 
We will differentiate \eqref{eq: wgrad of SDE point} again with respect to both the measure and space variables given our assumptions on the smoothness of $ V$.
Recall  from Definition \ref{def:multiindex} the notation for multi-indices $(\mathsf{w},\boldsymbol{i})$, which we now begin to use.
To shorten notation, we define 
\begin{equation} \label{def:C4notation}
\|X^s_t\|_{\mathcal{C}^4}=\sup_{1\leq \mathsf w +|\boldsymbol{j}|\leq| 4|}\sup_{\boldsymbol{y},x,\nu}\|D^{\mathsf w,\boldsymbol{j}}X_t^{x,s,\nu}(\boldsymbol{y})\|_{L^\infty(\P)}, \qquad t > s \ge 0.
\end{equation}
The supremum can be taken to be plus infinity if the process does not admit a derivative. 
We will make implicit use of the time-homogeneity property, that $\|X^s_t\|_{\mathcal{C}^4}$ depends on $t$ and $s$ only through $t-s$:
\[
\|X^s_t\|_{\mathcal{C}^4} = \|X^0_{t-s}\|_{\mathcal{C}^4}, \qquad  t > s \ge 0.
\]
We record some differentiability results in the following lemma, taken from \cite[Theorem 3.4]{Chassagneux2022-jt} and \cite[Theorem 3.2]{Crisan2017Smoothing}.
\begin{lemma}
\label{lm: estimates on flows}
    The map $(x,\nu) \mapsto X_t^{x,s,\nu}$ is $6$ times differentiable in its space and measure arguments. Moreover, there exist constants $C,b>0$ such that 
    \[\|X_t^{0}\|_{\mathcal C^4}\leq Ce^{bt}.\]
\end{lemma}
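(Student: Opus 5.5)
The plan is to differentiate the defining SDEs \eqref{eq: grad of SDE}, \eqref{eq: wgrad of SDE point}, \eqref{eq: wgrad of SDE} repeatedly in $x$, $y$ and $\nu$, and to close each level with a Gronwall argument. The rigorous existence of six derivatives I will take from the Fréchet-differentiability framework of \cite{Buckdahn2017-op}, iterated as in \cite[Theorem 3.4]{Chassagneux2022-jt}. There one shows inductively that each formally differentiated linear SDE system is well posed and that the difference quotients converge in $L^2(\P)$. The inductive step uses that $V\in\mathcal C^6_{\textup{bd}}$, so the coefficients of the $m$-th derivative system involve only derivatives of $V$ of order at most $m\le 6$, all bounded and Lipschitz.

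The structure of each derivative equation is the same. For a multi-index $(\mathsf w,\boldsymbol j)$ with $\mathsf w+|\boldsymbol j|=m$, the process $D^{\mathsf w,\boldsymbol j}X_t^{x,s,\nu}(\boldsymbol y)$ solves a linear random ODE of the form
\[
\frac{d}{dt}Z_t=\nabla_xV(\mu^{s,\nu}_t,X^{x,s,\nu}_t)Z_t+\E_{(1)}\big[\wgrad V(\cdots)\,Z^{\text{aux}}_{t,(1)}\big]+F_t .
\]
The forcing $F_t$ is a sum of products of bounded derivatives of $V$ evaluated along the flows (and along independent copies $X_{t,(i)}$, integrated out by $\E_{(i)}$) with derivatives of $X$ of strictly lower order. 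The auxiliary term $Z^{\text{aux}}$ is the same-order derivative of the process started from $[\nu]$. Notably, there is no stochastic integral in any of these equations, because the noise is additive. I would therefore first bound the system for the $[\nu]$-started process in $L^\infty$, using a sup over $y$ and the boundedness of $\wgrad V$. After that, the point-started process satisfies a linear ODE with bounded coefficients and forcing.

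The bound is then proved by induction on $m=1,\dots,4$. At order $1$, \eqref{eq: grad of SDE} gives $|\nabla_xX_t|\le e^{Lt}$. Plugging this into \eqref{eq: wgrad of SDE}, and taking a sup over $y$ and $\omega$, gives $\sup|\wgrad X^{s,\nu}_t|\le \int_s^t (L e^{L(r-s)}+L\sup|\wgrad X_r|)\,dr$, so Gronwall yields $Ce^{2Lt}$. The same bound then follows for \eqref{eq: wgrad of SDE point}. At order $m$, suppose all lower-order derivatives are bounded by $C_{m-1}e^{b_{m-1}t}$ almost surely, uniformly in $x,\boldsymbol y,\nu$. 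Since $F_t$ is a product of at most $m$ lower-order factors, we get $|F_t|\le C e^{m b_{m-1}t}$. Gronwall with rate $2L$ then gives $C_me^{b_mt}$ with $b_m=mb_{m-1}+2L$. Taking $b=b_4$ gives $\|X^0_t\|_{\mathcal C^4}\le Ce^{bt}$, and time-homogeneity transfers this to all $s$.

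The main obstacle is bookkeeping rather than analysis. Each $\wgrad$ produces a new variable $y_\ell$ and a new independent copy. Moreover, when a $\wgrad$ hits the measure argument of $\wgrad V(\mu_t^{s,\nu},\cdot)$ inside $\E_{(1)}$, it generates terms involving $D\mu^{s,\nu}_t$, which must be expressed as expectations of $\wgrad X^{s,\nu}$ of copies. I must check that every such term is still linear in the top-order unknown, with bounded coefficients, and that all the quantities stay in $L^\infty(\P)$ rather than merely $L^2$. This holds because the noise is additive, so the derivative equations are pathwise ODEs. For the six-fold differentiability, I would rely on \cite{Chassagneux2022-jt} and \cite[Theorem 3.2]{Crisan2017Smoothing} rather than redo the difference-quotient convergence argument.
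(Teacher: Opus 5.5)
Your proposal is correct and matches the paper, which gives no argument of its own and takes both claims directly from \cite[Theorem 3.4]{Chassagneux2022-jt} and \cite[Theorem 3.2]{Crisan2017Smoothing}. Your Gronwall induction on the linearized equations is the argument underlying those results, and your remark that additive noise makes them pathwise ODEs is what justifies the $L^\infty(\P)$ norm in \eqref{def:C4notation}.

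One bookkeeping correction: the forcing at order $m$ is not always of strictly lower order. For example, $\nabla_x^2X^{y,s,\nu}_{t,(1)}$ enters the equation for $\nabla_y\wgrad X^{x,s,\nu}_t(y)$, and $\nabla_x\wgrad X^{y,s,\nu}_{t,(1)}(z)$ enters the equation for $\wgrad^2X^{x,s,\nu}_t(y,z)$. So within each order you should also induct on the number $\mathsf w$ of measure derivatives. This works because, apart from the $[\nu]$-started auxiliary term, every same-order factor in the forcing has strictly fewer measure derivatives.
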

A key object of the analysis will be the operator $P_t$ defined as follows for an arbitrary function $f:\mathcal P_2(\R^d)\to \R$:
\[
    P_tf (\nu)= f(\mu_t^{0,\nu}).
\]
This is the semigroup associated with the McKean-Vlasov equation.
We collect a simple but necessary estimate.

\begin{lemma} 
\label{lm: bounds onf wgrad semigroup}
 Let $f:\mathcal P_2(\R^d)\to \R$ be in $C_{\textup{bd}}^4(\mathcal P_2(\R^d))$ and let $(\mathsf w,\boldsymbol{i})$ be a multi-index with size $\mathsf w+|\boldsymbol{i}|=k\leq 4$. Assume that for some constants $A \ge 0$ and $p\geq 1$,
 \begin{equation*}
     |D^{\mathsf w,\boldsymbol{i}} f(\nu,\boldsymbol{y})|\leq A\Big(1+ |\boldsymbol{y}|^p+\int_{\R^d}|z|^p\,d\nu(z)\Big).
 \end{equation*}
    Under Assumption \ref{assumption on Phi} there exists a constant $C>0$ depending only on $A,p$ such that 
    \begin{align*}
        |D^{\mathsf w,\boldsymbol{i}} P_t f(\nu,\boldsymbol{x})|&\leq \|X^0_t\|_{\mathcal{C}^4}C\big(1 \vee \|X^0_t\|_{\mathcal{C}^4}\big)^{k-1} \Big(1+\int_{\R^d} |z|^p\,d\mu_t^{0,\nu}(z)+\sum_{j=1}^{\boldsymbol{\mathsf w }}\int_{\R^d} |z|^p\,d\mu_t^{x^j,0,\nu}(z)\Big)
    \end{align*}
\end{lemma}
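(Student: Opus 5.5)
The plan is to write $P_tf = f\circ \Phi_t$ with $\Phi_t(\nu)=\mu^{0,\nu}_t$ and compute $D^{\mathsf w,\boldsymbol i}P_tf$ by a chain rule on Wasserstein space. Throughout I represent $\mu_t^{0,\nu}$ as $\Law(X_t^{[\nu],0,\nu})$, i.e.\ as the pushforward of $\nu$ by $x\mapsto X_t^{x,0,\nu}$ averaged over the Brownian noise. Under this representation a flat/Wasserstein derivative in $\nu$ has two sources. It can hit the initial law, which produces the spatial derivative $\nabla_x X_t^{x,0,\nu}$ evaluated at the new variable $x^1$. Or it can hit the dependence of the flow $X^{x,0,\nu}_t$ on its measure argument, which produces $\wgrad X_t^{x,0,\nu}(x^1)$ as in \eqref{eq: wgrad of SDE point}.

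\textbf{First derivative.} For $\mathsf w=1$ I expect the formula
\[
\wgrad P_tf(\nu,x^1)=\E\Big[\wgrad f\big(\mu_t^{0,\nu},X_t^{x^1,0,\nu}\big)\nabla_xX_t^{x^1,0,\nu}\Big]+\E\E_{(1)}\Big[\wgrad f\big(\mu^{0,\nu}_t,X_{t}^{[\nu],0,\nu}\big)\wgrad X_t^{[\nu],0,\nu}(x^1)\Big].
\]
I would justify it via the Fréchet derivative of $\xi\mapsto X_t^{x,0,\Law(\xi)}$ from \cite{Buckdahn2017-op}, together with the lift of $f$ to $L^2$. The same argument with a spatial derivative gives the $\nabla_{x^1}$ terms.

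\textbf{Higher derivatives.} I iterate by induction on $k=\mathsf w+|\boldsymbol i|\le 4$. Each further derivative, in a new measure variable or in an existing $x^j$, acts on one of the following:
\begin{enumerate}[(a)]
\item a derivative $D^{\mathsf w',\boldsymbol i'}f$ evaluated at $\mu_t^{0,\nu}$ and at points $X^{x^j,0,\nu}_{t,(\cdot)}$ or $X^{[\nu],0,\nu}_{t,(\cdot)}$;
\item a factor $D^{\mathsf w'',\boldsymbol j}X_t$.
\end{enumerate}
In case (a), the chain rule raises the order of $f$'s derivative by one and creates one new flow-derivative factor. In case (b), it raises the order of that flow derivative, which by Lemma \ref{lm: estimates on flows} stays at order $\le 4$ when $k\le4$.

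By the Faà di Bruno combinatorics, $D^{\mathsf w,\boldsymbol i}P_tf(\nu,\boldsymbol x)$ is then a finite sum, with the number of terms depending only on $k$. Each term has the form $\E[\,D^{\mathsf w',\boldsymbol i'}f(\mu_t^{0,\nu},\boldsymbol Z)\cdot \prod_{\ell=1}^{r} D^{\mathsf w_\ell,\boldsymbol j_\ell}X_t\,]$. Here $1\le r\le k$, each $\boldsymbol Z$-entry is either some $X^{x^j,0,\nu}_{t,(i)}$ or some $X^{[\nu],0,\nu}_{t,(i)}$, and the expectation includes the independent copies. The count $r\ge1$ holds because every derivative, applied in the first step, hits $f\circ$(flow) and therefore produces at least one flow-derivative factor.

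\textbf{Bounding the terms.} Each product of flow derivatives is bounded in $L^\infty(\P)$ by $\|X^0_t\|_{\mathcal C^4}^r\le \|X_t^0\|_{\mathcal C^4}(1\vee\|X_t^0\|_{\mathcal C^4})^{k-1}$. For the factor of $f$, I use the growth hypothesis, which gives a bound $A(1+|\boldsymbol Z|^p+\int|z|^pd\mu_t^{0,\nu})$. Taking expectations:
\begin{enumerate}[(i)]
\item entries $X^{x^j,0,\nu}_{t}$ contribute $\int|z|^pd\mu_t^{x^j,0,\nu}$;
\item entries $X^{[\nu],0,\nu}_t$ contribute $\int|z|^pd\mu_t^{0,\nu}$.
\end{enumerate}
Since only $x^1,\dots,x^{\mathsf w}$ appear, this yields exactly the stated bound. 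The hypothesis is stated only for the order-$k$ derivative, so for the lower-order derivatives of $f$ that also appear I would use the same polynomial-growth bound; implicitly this requires the hypothesis for all multi-indices of size $\le k$, which I take as the intended reading.

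\textbf{Main obstacle.} The main difficulty is rigorously justifying the iterated chain rule: that differentiation can be interchanged with expectations over the independent copies, and that $\wgrad X_t^{[\nu],0,\nu}(y)=\wgrad X_t^{x,0,\nu}(y)|_{x=[\nu]}$ remains compatible with further differentiation. For this I would rely on the differentiability statements of \cite[Theorem 3.4]{Chassagneux2022-jt} and on dominated convergence, using the uniform $L^\infty$ bounds on the flow derivatives together with the polynomial moments from Lemma \ref{lem:flow moment bounds}.
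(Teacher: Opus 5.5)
Your proposal is correct and takes essentially the same route as the paper: both start from the chain rule of \cite[Lemma 6.1]{Buckdahn2017-op}, writing $\wgrad P_t f(\nu,y)$ as the sum of a $\nabla_x X_t^{y,0,\nu}$ term and a $\wgrad X_t^{0,\nu}(y)$ term, bound the flow-derivative factors by $\|X^0_t\|_{\mathcal C^4}$ and the $f$-factors by the growth hypothesis plus moments, and then (in the paper, without spelling it out) repeat this for the higher orders. Your remark that the growth bound is needed for all derivatives of $f$ of size at most $k$, not only the one of size $k$, is a fair clarification; the paper's proof uses this implicitly, and it holds in the application (Lemma \ref{lm: estimates on remainder}(3)).
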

\begin{proof} 
    For $k=1$, by the derivation rules established in \cite[Lemma 6.1]{Buckdahn2017-op}, Jensen inequality and the growth assumption,
    \begin{align*}
        |\wgrad P_t f(\nu,y)|&= \Big|\E[\wgrad f(\mu_t^{0,\nu}, X_t^{y,0,\nu}) \nabla_x X_t^{y,0,\nu}+\wgrad f(\mu_t^{0,\nu},X_t^{0,\nu}) \wgrad X_t^{0,\nu}(y) ]\Big|\\
        &\leq \|X^0_t\|_{\mathcal{C}^4}\E\big[|\wgrad f(\mu_t^{0,\nu},X_t^{y,0,\nu})|+|\wgrad f(\mu_t^{0,\nu},X_t^{0,\nu})|\big]\\
        &\leq 2\|X^0_t\|_{\mathcal{C}^4}A\E\Big[1+|X_t^{y,0,\nu}|^p+|X_t^{0,\nu}|^p+\int_{\R^d}|z|^p\,d\mu_t^{0,\nu}(z)\Big]\\
        &\leq 4\|X^0_t\|_{\mathcal{C}^4}A\Big(1+\int_{\R^d}|z|^p\,d(\mu_t^{0,\nu}+\mu_t^{y,0,\nu})(z)\Big)
    \end{align*}
    For higher order terms, we simply repeat this argument after observing that differentiating again leads to second order derivatives of $X_t$, namely \[ \nabla_x X_t^{\cdot,s,\nu},\wgrad X_t^{\cdot,s,\nu}(\cdot), \wgrad \nabla_x X_t^{\cdot,s,y}(\cdot), \wgrad^2 X_t^{\cdot,s,\nu}(\cdot,\cdot)\]
    multiplied by the first and second derivatives of $f$.
\end{proof}
The point of Lemma \ref{lm: bounds onf wgrad semigroup} is that the Wasserstein semigroup (and its derivatives) preserve polynomial growth estimates, quantitatively. The lemma is stated for functions with bounded derivatives, so the polynomial growth bound assumed for $D^{\mathsf w,\boldsymbol{i}} f$ is redundant in some sense; the point, however, is that the constant $C$ depends only on $A$ and not on any uniform bounds of the derivatives of $f$. This will be used within an approximation argument later, when an unbounded function is approximated by bounded ones, and it will be crucial that this $C$ does not blow up as we pass to the limit. 

The remainder of this section is devoted to uniform-in-time estimates of the derivatives of the process $X_t$, which are much more involved. This will be used to obtain uniform in time estimates for the semigroup. As opposed to the PDE approach of \cite{UiTWeakChaos}, we adopt a more probabilistic perspective, leveraging the notion of displacement monotonicity. 
\begin{lemma} \label{lem:monotonicity}
    Under Assumption \ref{Assumption: UiT}, for square-integrable random variables $X$ and $Y$, 
    \begin{equation}
    \label{eq: wasserstein gradient is PD}
        \E[Y^\top \wgrad V(\Law(X),X,X_{(1)})Y_{(1)}+Y^\top \nabla_x V (\Law(X),X)Y ]\leq -\lambda \E[|Y|^2],
    \end{equation}
    where $(X_{(1)},Y_{(1)})$ are an independent copy of $(X,Y)$.
\end{lemma}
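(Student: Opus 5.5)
The plan is to differentiate the monotonicity inequality of Assumption \ref{Assumption: UiT}(2) along a one-parameter family of random variables. Fix square-integrable $X$ and $Y$ on a common probability space, and for $h>0$ set $X^h := X + hY$. Applying Assumption \ref{Assumption: UiT}(2) to the pair $(X^h, X)$ gives
\[
\E\Big[\big(V(\Law(X^h),X^h)-V(\Law(X),X)\big)\cdot hY\Big]\le -\lambda h^2\,\E|Y|^2 .
\]
Dividing by $h^2$ reduces the statement to computing the limit of $h^{-1}\big(V(\Law(X^h),X^h)-V(\Law(X),X)\big)$ in $L^2(\P)$ as $h\downarrow 0$.

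The limit is obtained from the chain rule for Lions derivatives along random variables (e.g.\ \cite[Chapter 5]{Carmonabook1} or \cite[Lemma 6.1]{Buckdahn2017-op}). I split the increment into a space part and a measure part:
\[
V(\Law(X^h),X^h)-V(\Law(X),X) = \big[V(\Law(X^h),X^h)-V(\Law(X^h),X)\big] + \big[V(\Law(X^h),X)-V(\Law(X),X)\big].
\]
For the first bracket, the mean value theorem gives $h\int_0^1\nabla_x V(\Law(X^h),X+rhY)\,dr\,Y$. Since $\nabla_x V$ is bounded and $\W_2$/space Lipschitz (Assumption \ref{assumption on Phi}(1)), and $\W_2(\Law(X^h),\Law(X))\le h\|Y\|_{L^2}$, this equals $h\nabla_xV(\Law(X),X)Y+o(h)$ in $L^2$.

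For the second bracket, fix $x$ and differentiate $r\mapsto V(\Law(X+rY),x)$. Using the flat derivative and then the gradient in the new variable, one has
\[
\frac{d}{dr}V(\Law(X+rY),x)=\E_{(1)}\big[\wgrad V(\Law(X+rY),x,X_{(1)}+rY_{(1)})Y_{(1)}\big],
\]
where $(X_{(1)},Y_{(1)})$ is an independent copy. This identity is the standard lifting formula; it holds here because $\wgrad V$ is bounded and Lipschitz. Integrating in $r$ and using the Lipschitz continuity of $\wgrad V$ in all arguments, the second bracket equals $h\,\E_{(1)}[\wgrad V(\Law(X),X,X_{(1)})Y_{(1)}]+o(h)$ in $L^2$, uniformly over the evaluation point.

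Putting the two pieces together, multiplying by $Y$, taking expectations, dividing by $h^2$ and letting $h\to0$ gives \eqref{eq: wasserstein gradient is PD}. The passage to the limit uses dominated convergence and Cauchy--Schwarz, since all derivatives involved are bounded and $X,Y\in L^2$. The orientation of the matrix product $Y^\top \wgrad V\, Y_{(1)}$ is immediate by writing $\E[Y\cdot \E_{(1)}[\wgrad V(\cdot)Y_{(1)}]]$ as a joint expectation over the product space.

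The main obstacle is justifying the chain rule with $o(h)$ remainders in $L^2$ when $X,Y$ are merely square integrable and not bounded. Boundedness and Lipschitz continuity of $\nabla_xV$ and $\wgrad V$ handle this: each remainder is bounded by $Ch^2(|Y|+\|Y\|_{L^2})^2$ type quantities, which are integrable. If a cleaner route is needed, I would first prove the claim for bounded $Y$ and then extend by density in $L^2$, using that both sides are continuous in $Y\in L^2$.
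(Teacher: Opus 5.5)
Your proposal is correct and is essentially the paper's argument: apply Assumption \ref{Assumption: UiT}(2) to the pair $(X+hY,X)$, divide by $h^2$, and let $h\downarrow 0$ using the derivative of $h\mapsto V(\Law(X+hY),X+hY)$ together with dominated convergence. The only difference is that the paper cites \cite[Proposition 5.85]{Carmonabook1} for this derivative, whereas you derive it by splitting into a spatial and a measure increment. One small correction: the spatial remainder is $o(h)$ in $L^2$ by dominated convergence using the boundedness of $\nabla_x V$, not by the quadratic Lipschitz bound you quote, which after pairing with $Y$ would require $\E|Y|^3<\infty$; your truncation and density fallback also covers this.
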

\begin{proof}
    Starting from the definition of monotonicity from Assumption \ref{Assumption: UiT}
    \[\E[(V(\Law(X),X)-V(\Law(Z),Z))\cdot (X-Z)]\leq -\lambda \E[|X-Z|^2]\]
    take $Z=X+hY$ where $Y\in L^2(\Omega)$ to obtain
     \[\frac 1h\E[(V(\Law(X+hY),X+hY)-V(\Law(X),X))\cdot Y]\leq-\lambda \E[|Y|^2]\]
Now \cite[Proposition 5.85]{Carmonabook1} asserts the existence of the derivative
\begin{align*}
    \frac{d}{dh}V(\Law(X+hY),X+hY)\Big \vert_{h=0}=\E_{(1)}[\wgrad V(\Law(X),X,X_{(1)})Y_{(1)}]+ \nabla_x V(\Law(X),X)Y,
\end{align*}
and we use  dominated convergence theorem to conclude the proof.
\end{proof}

Using the dissipativity from Remark \ref{remark:dissipation}, we can deduce the following standard uniform-in-time moment bounds. We omit the proof.
\begin{lemma} \label{lem:flow moment bounds UIT}
    Under assumption \ref{Assumption: UiT}, for each $p\geq 1$ there exists a constant $C$ such that, for every $s \ge 0$, 
    \begin{align*}
        \sup_{t\geq s}\int_{\R^d}|x|^p\,d\mu_t^{y,s,\nu}(x)&\leq C+C |y|^p;\\
        \sup_{t\geq s}\int_{\R^d}|x|^p\,d\mu_t^{s,\nu}&\leq C+C \int_{\R^d}|x|^p\,d\nu(x). 
    \end{align*}
\end{lemma}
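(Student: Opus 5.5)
The plan is a synchronous coupling argument combined with the dissipativity \eqref{remark:dissipation}; the bounds should be uniform because the constants in the resulting differential inequality do not depend on $t$. For the second bound, take $\xi=[\nu]$ and the process $X^{s,\nu}_t$ from \eqref{eq: linearized process}. Apply It\^o's formula to $|X^{s,\nu}_t|^p$ for even integers $p\ge 2$; the case of general $p\ge1$ then follows from Jensen. The drift contribution is
\[
p|X_t|^{p-2}X_t\cdot V(\mu^{s,\nu}_t,X_t),
\]
which we split as $V(\mu^{s,\nu}_t,X_t)-V(\mu^{s,\nu}_t,0)+V(\mu^{s,\nu}_t,0)-V(\delta_0,0)+V(\delta_0,0)$. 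The first difference gives $-\lambda p|X_t|^p$ by \eqref{remark:dissipation}. The second is bounded by $L\,\W_2(\mu^{s,\nu}_t,\delta_0)=L(\E|X_t|^2)^{1/2}$ by the Lipschitz property of $V$. The third is a constant. The It\^o correction is $\sigma^2p(p+d-2)|X_t|^{p-2}$.

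After taking expectations (with a localization as in the proof of Proposition \ref{pr: LSI}), we get
\[
\tfrac{d}{dt}\E|X_t|^p\le -\lambda p\,\E|X_t|^p+pL\,\E[|X_t|^{p-1}](\E|X_t|^2)^{1/2}+C\,\E|X_t|^{p-1}+C\,\E|X_t|^{p-2}.
\]
By H\"older, $\E[|X_t|^{p-1}](\E|X_t|^2)^{1/2}\le \E|X_t|^p$.

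This creates the main obstacle: the interaction term $pL\,\E|X_t|^p$ competes with $-\lambda p\,\E|X_t|^p$, and we have not assumed $L<\lambda$. To get around it, I would use the full monotonicity Assumption \ref{Assumption: UiT}(2) rather than only the spatial dissipativity. Apply Assumption \ref{Assumption: UiT}(2) with $Y=0$ deterministic:
\[
\E\big[(V(\Law(X_t),X_t)-V(\delta_0,0))\cdot X_t\big]\le-\lambda\E|X_t|^2.
\]
For $p=2$ this gives directly
\[
\tfrac{d}{dt}\E|X_t|^2\le-2\lambda\E|X_t|^2+C(\E|X_t|^2)^{1/2}+2\sigma^2d,
\]
so $\sup_{t\ge s}\E|X^{s,\nu}_t|^2\le C+C\int|x|^2d\nu$.

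Once the second moment is controlled uniformly, the interaction term in the general-$p$ inequality becomes $pL\,\E[|X_t|^{p-1}]\,\sqrt{M_2}$, where $M_2:=\sup_{t\ge s}\E|X_t|^2$, a known bounded quantity. This term is of lower order, so Young's inequality absorbs it into $-\tfrac{\lambda p}{2}\E|X_t|^p$ at the cost of an additive constant $C(1+M_2^{p/2})$. Gronwall then gives
\[
\E|X_t|^p\le \E|\xi|^p+C\big(1+M_2^{p/2}\big).
\]
By Jensen, $M_2^{p/2}\le C+C\int|x|^pd\nu$ when $p\ge2$, so $\E|X_t|^p\le C+C\int|x|^pd\nu$, which proves the second bound.

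For the first bound, the flow $t\mapsto\mu^{s,\nu}_t$ is now a given input to the dynamics of $X^{y,s,\nu}_t$. The same It\^o computation applies, using \eqref{remark:dissipation} for the spatial difference and the Lipschitz bound $|V(\mu^{s,\nu}_t,0)|\le C(1+M_2^{1/2})$. It yields
\[
\E|X^{y,s,\nu}_t|^p\le |y|^p+C(1+M_2^{p/2}).
\]

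One caveat remains. This constant depends on $\int|x|^2d\nu$, whereas the statement asks for a constant depending only on $|y|$. I would resolve this in one of two ways. The first is to read the constant as allowed to depend on the second moment of $\nu$, which is how the bound is used later, with $\nu=\mu_r$ having uniformly bounded moments. The second, preferable if it works, is to obtain a $\nu$-free bound on $\sup_t|V(\mu^{s,\nu}_t,0)|$: since $\wgrad V$ is bounded, $V(\cdot,0)$ is $\W_1$-Lipschitz, and comparing $\mu^{s,\nu}_t$ to the invariant measure via monotonicity gives $\W_2$ contraction at rate $e^{-\lambda(t-s)}$. However, this still leaves an initial dependence on $\nu$ through $\W_2(\nu,\mu_\infty)$, so the $\nu$-dependent reading of the constant is the fallback.
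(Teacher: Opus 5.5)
Your argument (It\^o's formula for $|X_t|^p$, spatial dissipativity \eqref{remark:dissipation} for the $x$-increment of $V$, and separate control of the measure increment) is the standard argument the paper has in mind when it omits the proof. For $p\ge 2$ it is correct. Two points need fixing.

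\textbf{The obstacle, and the gap for $p<2$.} The ``main obstacle'' is not really there. Write $V(\mu,0)-V(\delta_0,0)$ as an integral of $\delta_m V$ against $\mu-\delta_0$ and apply Kantorovich duality: this gives $|V(\mu,0)-V(\delta_0,0)|\le\|\wgrad V\|_\infty\W_1(\mu,\delta_0)$. Assumption \ref{Assumption: UiT}(3) gives $\|\wgrad V\|_\infty<\lambda/\sqrt3<\lambda$. So the interaction term is at most $p\|\wgrad V\|_\infty\E|X_t|^p$ and is absorbed directly. Your detour through monotonicity at $p=2$ is valid but unnecessary.

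This matters for small $p$, where there is a genuine gap. Your claim that general $p\ge1$ ``follows from Jensen'' does not give the stated form. Jensen from a higher moment $q>p$ yields $(C+C\int|x|^q\,d\nu)^{p/q}$, not $C+C\int|x|^p\,d\nu$. For $p<2$, your additive constant $C(1+M_2^{p/2})$ involves $\int|x|^2\,d\nu$, which $\int|x|^p\,d\nu$ does not control. The fix is as follows.
\begin{itemize}
\item For real $p\ge2$, apply It\^o to $|x|^p$ directly; nothing in your computation uses evenness.
\item For $p\in[1,2)$, apply It\^o to $(1+|x|^2)^{p/2}$ and use the $\W_1$ bound above together with $\E|X_t|\le(\E(1+|X_t|^2)^{p/2})^{1/p}$. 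This closes with rate $(\lambda-\|\wgrad V\|_\infty)p>0$.
\end{itemize}

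\textbf{The first bound.} Your caveat is correct, and in fact the $\nu$-free reading is false, so the second resolution you consider cannot work. Take $V(\mu,x)=-ax+\epsilon\int z\,d\mu(z)$ with $\epsilon>0$ and $3\epsilon^2<(a-\epsilon)^2$. This satisfies Assumption \ref{Assumption: UiT} with $\lambda=a-\epsilon$. Solving the linear equations for the means gives, at $y=0$,
\[
\int x\,d\mu^{y,s,\nu}_{s+1}(x)=e^{-a}(e^{\epsilon}-1)\int z\,d\nu(z),
\]
which is unbounded in $\nu$. The correct statement is
\[
\sup_{t\ge s}\int|x|^p\,d\mu^{y,s,\nu}_t\le C\Big(1+|y|^p+\int|x|^p\,d\nu\Big),
\]
and your bound $|y|^p+C(1+M_2^{p/2})$ gives exactly this for $p\ge2$. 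Note that the paper needs this explicit polynomial dependence on the moments of $\nu$, not merely ``a constant depending on $\nu$''. In Section \ref{sect: remainder} the lemma is applied with random measures $\nu\in\{\eta_q,\theta_u,m^n_s\}$, whose moments are bounded only in expectation. It is not applied only with $\nu=\mu_r$.
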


The following proposition now gives the key uniform-in-time analog of Lemma \ref{lm: estimates on flows}. 

\begin{proposition}
\label{pr: decay of wgrad X}
    There exist constants $C,b>0$ such that
    $\|X_t^s\|_{\mathcal C^4}\leq Ce^{-b(t-s)}$ for all $t \ge s \ge 0$, where $\|X_t^s\|_{\mathcal C^4}$ was defined in \eqref{def:C4notation} .
\end{proposition}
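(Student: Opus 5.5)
By time-homogeneity it suffices to take $s=0$ and prove $\|X^0_t\|_{\mathcal C^4}\le Ce^{-bt}$. Since Lemma \ref{lm: estimates on flows} already gives $\|X^0_t\|_{\mathcal C^4}\le Ce^{b't}$ on bounded intervals, we only need exponential decay for large $t$. We argue by induction on the size $\mathsf w+|\boldsymbol j|$ of the multi-index, using a Gronwall argument driven by the monotonicity of Lemma \ref{lem:monotonicity} and Remark \ref{rk: UiT LSI}.

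\emph{First order.} For $\nabla_x X^{x,0,\nu}_t$, the linear ODE \eqref{eq: grad of SDE} together with $\nabla_x V\le -\lambda I$ (Remark \ref{rk: UiT LSI}) gives $\frac{d}{dt}|\nabla_x X_t|^2\le -2\lambda|\nabla_xX_t|^2$. Hence $|\nabla_x X_t|_{\mathrm{op}}\le e^{-\lambda t}$ almost surely.

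For the measure derivative, we first treat \eqref{eq: wgrad of SDE} for $\wgrad X^{0,\nu}_t(y)$. Set $Y_t=\wgrad X^{0,\nu}_t(y)$, with $X=X^{0,\nu}_t$ and $(X_{(1)},Y_{(1)})$ an independent copy. Compute $\frac{d}{dt}\E|Y_t|^2$. The first and third terms of the dynamics are exactly the left side of \eqref{eq: wasserstein gradient is PD}, so they contribute at most $-2\lambda\E|Y_t|^2$. The middle term is a forcing bounded by $\|\wgrad V\|_\infty\,\E_{(1)}|\nabla_xX^{y}_{t,(1)}|\le \|\wgrad V\|_\infty e^{-\lambda t}$. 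This yields
\[
\frac{d}{dt}\E|Y_t|^2\le -2\lambda\E|Y_t|^2+2\|\wgrad V\|_\infty e^{-\lambda t}(\E|Y_t|^2)^{1/2},
\]
and therefore $(\E|Y_t|^2)^{1/2}\le C te^{-\lambda t}\le Ce^{-bt}$ for any $b<\lambda$.

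With this in hand, we return to the pointwise equation \eqref{eq: wgrad of SDE point} for $\wgrad X^{x,0,\nu}_t(y)$. Its drift term has $\nabla_xV\le-\lambda I$, and both forcing terms are now $O(e^{-bt})$: the second by the bound just proved, after Cauchy--Schwarz against bounded $\wgrad V$. The same Gronwall argument gives an almost-sure bound $Ce^{-bt}$, uniform in $x,y,\nu$.

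\emph{Higher orders.} Differentiate the equations once more in $x$, $y$, or the measure. Each derivative $D^{\mathsf w,\boldsymbol j}X_t$ of order $k\le4$ satisfies a linear equation of the same structure as \eqref{eq: wgrad of SDE point}/\eqref{eq: wgrad of SDE}:
\begin{itemize}
\item a principal part $\nabla_xV\cdot(\text{unknown})$, together with (for the $[\nu]$-started version) a mean-field term $\E_{(1)}[\wgrad V\cdot(\text{unknown copy})]$; these two combine into the monotone structure \eqref{eq: wasserstein gradient is PD};
\item a forcing that is a finite sum of products of bounded derivatives of $V$ (of order $\le 5$, available since $V\in\mathcal C^6_{\rm bd}$) with at least two lower-order derivatives of $X$, or one lower-order derivative paired with a derivative of another copy.
\end{itemize}
By the induction hypothesis each such product is $O(e^{-bt})$, and in fact $O(e^{-2bt})$ when at least two factors appear. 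As at first order, we first run the Gronwall bound in $L^2$ for the $[\nu]$-started process, and then deduce the $L^\infty(\P)$ bound for the $x$-started one. Shrinking $b$ slightly at each order (e.g.\ $b_k=\lambda/2^{k}$, or absorbing polynomial factors $t^k$) closes the induction with a common $b>0$.

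\emph{Main obstacle.} The main difficulty is the terms where the unknown itself appears inside an expectation over an independent copy, as in the third term of \eqref{eq: wgrad of SDE}. Monotonicity controls these only in $L^2$, through Lemma \ref{lem:monotonicity}, and not pointwise. This is why the $L^2$ estimate for the $[\nu]$-started process must be done first and then fed into the pointwise equation. A related point needing care: Lemma \ref{lem:monotonicity} requires $Y$ to be measurable with respect to the same randomness as $X$, so at higher orders one must check that the relevant unknown (e.g.\ $\wgrad^2X^{0,\nu}_t(y,z)$ with $y,z$ frozen) has this form, with the other pieces put into the forcing.

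Finally, a word on where the smallness condition enters. One should check that the forcing at first order decays at rate $e^{-\lambda t}$ while the dissipation is $2\lambda$, so no smallness assumption is needed here. Assumption \ref{Assumption: UiT}(3) is used only elsewhere.
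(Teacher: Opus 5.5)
Your proposal is correct and follows essentially the same route as the paper. Both argue by induction on the order of differentiation. Pure $x$-derivatives are handled by a pathwise Gronwall argument using $\nabla_x V\le -\lambda I$ (Remark \ref{rk: UiT LSI}). For each measure or mixed derivative, one first gets an $L^2$ decay bound for the $[\nu]$-started process via Lemma \ref{lem:monotonicity}, and then a pathwise bound for the $x$-started process, with the $[\nu]$-started term and all lower-order products treated as exponentially decaying forcing. Your remark that only the monotonicity condition, and not the smallness condition, is needed here is also consistent with the paper's argument.
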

\begin{proof}
We write the proof for $d=1$ to simplify notation, so that all of the $\nabla_x$ symbols below are really just univariate derivatives. Differentiability of the map $\nu\mapsto X_t^{x,s,\nu}$ and $x\mapsto \wgrad^{k} X_t^{x,s,\nu}$ was discussed above.
In what follows, we take $b$ and $C$ to be positive constants that can change from line to line but do not depend on time or the space and measure variables.

The proof goes by induction on the number of derivatives. We begin by estimating $\nabla_x^k X_t^{x,s,\nu}$ for $k=1$, which we recall satisfies the ODE \eqref{eq: grad of SDE}. Use the chain rule and the fact that $\nabla_x V$ is negative definite by Remark \ref{rk: UiT LSI}:
    \[\frac{d}{dt}|\nabla_x X_t^{x,s,\nu}|^2= 2  \nabla_x V (\mu_t^{s,\nu},X_t^{x,s,\nu}) (\nabla_x X_t^{x,s,\nu})^2 \leq -2\lambda |\nabla_x X_t^{x,s,\nu}|^2 ,\]
    which by Gronwall yields $|\nabla_x X_t^{x,s,\nu}|^2\leq e^{-2\lambda(t-s) }$. For the second derivative, by differentiating again we find 
    \begin{align*}
    \frac{d}{dt}\nabla_x^2 X_t^{x,s,\nu}&= \nabla_x V(\mu_t^{s,\nu},X_t^{x,s,\nu})\nabla_x^{2}X_t^{x,s,\nu} +\nabla^2_xV(\mu_t^{s,\nu},X_t^{x,s,\nu})(\nabla_x X_t^{x,s,\nu})^2 ;\\
    \nabla_x^2 X_s^{x,s,\nu}&=0.
    \end{align*}
    The second term decays exponentially, by the preceding estimate on $\nabla_x^k X_t^{x,s,\nu}$ and the boundedness of  $\nabla_x^2V$. The first term provides dissipativity as in the $k=1$ case; applying the chain rule to compute $d|\nabla_x^2 X_t^{x,s,\nu}|^2$ and then using Gronwall yields again $|\nabla_x^2 X_t^{x,s,\nu}|\leq Ce^{-b(t-s)}$.
    The higher order derivatives $\nabla_x^k X_t^{x,s,\nu}$ are estimated inductively following a similar pattern: the highest order term will hit the function $\nabla_x V$ which provides dissipativity, and the higher derivatives of $V$ appear only multiplied by lower order derivatives of $(\nabla_x^j X_t^{x,s,\nu})_{j < k}$.
    
   The reasoning for the Wasserstein derivatives is similar, but more elaborate: First consider $\wgrad X_t^{s,\nu}(y)$, recalling the dynamics from \eqref{eq: wgrad of SDE}. Check that 
     \begin{align*}
     \frac 12 \frac{d}{dt}\E[|\wgrad X_t^{s,\nu}(y)|^2]&=\E[ \wgrad X_t^{s,\nu}(y)\wgrad V(\mu^{s,\nu}_t,X_t^{s,\nu}, X_{t,(1)}^{s,\nu}(y))\wgrad  X_{t,(1)}^{s,\nu}(y)]\ \\
     &\quad +\E[\wgrad X_t^{s,\nu}(y) \nabla_xV(\mu^{s,\nu}_t,X_t^{s,\nu})\wgrad X_t^{s,\nu}(y)] \\
     &\quad +\E[\wgrad X_t^{s,\nu}(y) \wgrad V(\mu_t^{s,\nu},X_t^{[\nu]}, X_{t,(1)}^{y,s,\nu})\nabla_x X_{t,(1)}^{y,s,\nu}] \\
        &\leq -\frac\lambda 2 \E[|\wgrad X_t^{s,\nu}(y)|^2] +\frac{1}{2\lambda}\E[|\wgrad V(\mu_t^{s,\nu},X_t^{s,\nu}, X_{t,(1)}^{y,s,\nu})\nabla_x X_{t,(1)}^{y,s,\nu}|^2] .
    \end{align*}
    where we use Lemma \ref{lem:monotonicity} to control the first two terms and  Young's inequality to control the third. Our previous argument shows that $|\nabla_x X_t^{x,s,\nu}| \le e^{-2\lambda (t-s)}$, and we deduce from Gronwall and boundedness of $\wgrad V$ that $\E[|\wgrad X_t^{s,\nu}(y)|^2]\leq C e^{-b(t-s)}$. 
    Next, recalling the dynamics \eqref{eq: wgrad of SDE point} for the linearized process $\wgrad X_t^{x,s,\nu}(y)$, we obtain a similar bound for every $x$ after computing
    \begin{align*}
        \frac12 \frac{d}{dt}|\wgrad X_t^{x,s,\nu}(y)|^2&= \wgrad X_t^{x,s,\nu}(y) \E_{(1)}[\wgrad V(\mu_t^{s,\nu},X_t^{x,s,\nu}, X_{t,(1)}^{s,\nu}) \wgrad X_{t,(1)}^{s,\nu}(y)] \\
        &\quad + \wgrad X_t^{x,s,\nu}(y)\nabla_x V(\mu_t^{s,\nu},X_t^{x,s,\nu}) \wgrad X_t^{x,s,\nu}(y) \\
        &\quad + \wgrad X_t^{x,s,\nu}(y) \E_{(1)}[\wgrad V(\mu_t^{s,\nu},X_t^{x,s,\nu}, X_{t,(1)}^{x,s,\nu})\nabla_x X_{t,(1)}^{y,s,\nu}] \\
        &\leq C\E[|\wgrad X_t^{s,\nu}(y)|^2] -\frac{\lambda}{2}|\wgrad X_t^{x,s,\nu}(y)|^2 +C\E[|\nabla_x X_t^{y,s,\nu}|^2] \\ 
        &\leq Ce^{-\lambda(t-s)/2}-\frac{\lambda}{2}|\wgrad X_t^{x,s,\nu}(y)|^2+Ce^{-(t-s) \lambda} .
    \end{align*}
    Here we used $\nabla_x V \le -\lambda$ from Remark \ref{rk: UiT LSI} along with  Young's inequality, boundedness of $\wgrad V$, and the bound on $\E[|\wgrad X_t^{s,\nu}(y)|^2]$ just established. We deduce that   $|\wgrad X_t^{x,s,\nu}(y)|^2\leq Ce^{-b(t-s)}$ for some $b>0$ for every $x$; notice that this pathwise estimate holds almost surely.
    
    We continue to higher order derivatives. Recall that $\wgrad \nabla_x X_t^{x,s,\nu}(y)=\nabla_x \wgrad X_t^{x,s,\nu}(y)$. The dynamics of $\nabla_x \wgrad X_t^{x,s,\nu}(y)$ are given by (see \cite[Lemma 5.3]{Buckdahn2017-op}):
    \begin{align*}
        \frac{d}{dt}\nabla_x\wgrad X_t^{x,s,\nu}(y) =  \ &\E_{(1)}[\nabla_x\wgrad V(\mu_t^{s,\nu},X_t^{x,s,\nu}, X_{t,{(1)}}^{s,\nu}) \nabla_x X_t^{x,s,\nu}\wgrad  X_{t,(1)}^{s,\nu}(y)] \\
        &+\nabla_x V(\mu_t^{s,\nu},X_t^{x,s,\nu}) \nabla_x\wgrad  X_t^{x,s,\nu}(y) \\
        &+\nabla_x^2 V(\mu_t^{s,\nu},X_t^{x,s,\nu}) \nabla_x X_t^{x,s,\nu}\wgrad X_t^{x,s,\nu}(y) \\
        &+\E_{(1)}[\nabla_x\wgrad V(\mu_t^{s,\nu},X_t^{x,s,\nu}, X_{t,(1)}^{y,s,\nu})\nabla_x X_{t,(1)}^{y,s,\nu}\nabla_x X_t^{x,s,\nu}] .
    \end{align*}
    Notice that $\nabla_x\wgrad X_t^{x,s,\nu}(y)$ appears on the right-hand side only in the $\nabla_xV$ term. The remaining terms all contain higher derivatives of $V$, which are uniformly bounded by assumption, multiplied by lower-order derivatives of $X_{t,(1)}^{s,\nu}(y)$ and $X_t^{x,s,\nu}(y)$ which we have already shown to decay exponentially in time. Thus, using Young's inequality and these lower-order bounds, we get
    \[
    \frac{d}{dt}|\nabla_x\wgrad X_t^{x,s,\nu}(y)|^2 \le -\frac{\lambda}{2}|\nabla_x\wgrad X_t^{x,s,\nu}(y)|^2 + Ce^{-b(t-s)},
    \]
    which yields $|\nabla_x\wgrad X_t^{x,s,\nu}(y)|^2 \le Ce^{-b(t-s)}$.
    The higher orders can be checked inductively following the same scheme that we now sketch: Differentiating again leads us to 
 \begin{align*}
     \frac{d}{dt}\wgrad^2 X_t^{x,s,\nu}(y,z) = \ &\E_{(1)}[\wgrad V(\mu_t^{s,\nu},X_t^{x,s,\nu},X_{t,(1)}^{x,s,\nu}) \wgrad^2  X_t^{x,s,\nu}(y,z)] \\
     &+\nabla_x V(\mu_t^{s,\nu},X_t^{x,s,\nu}) \wgrad^2 X_t^{x,s,\nu}(y,z) \\
     &+\E_{(1)}\bigg[\wgrad\bigg(\wgrad V(\mu_t^{s,\nu},X_t^{x,s,\nu},X_{t,(1)}^{x,s,\nu})\bigg)(z) \wgrad  X_t^{x,s,\nu}(y)\bigg] \\
     &+\wgrad\bigg(\nabla_x V(\mu_t^{s,\nu},X_t^{x,s,\nu})\bigg)(z) \wgrad X_t^{x,s,\nu}(y) \\
     &+\wgrad \E_{(1)}[\wgrad V(\mu_t^{s,\nu},X_t^{x,s,\nu},X_{t,(1)}^{y,s,\nu}) \nabla_x X_{t,(1)}^{y,s,\nu}] .
 \end{align*} 
Notice that we are not fully developing the derivatives in the last three lines.
The first two terms provide dissipativity by using Lemma \ref{lem:monotonicity} after computing $(d/dt)|\wgrad^2 X_t^{x,s,\nu}(y,z)|^2$, similarly to before, while the remaining terms involve lower order derivatives of $X_t^{x,s,\nu}$, which we have already shown to be decaying exponentially,
multiplied by derivatives of $V$ which are bounded by assumption.  This yields $|\wgrad^2 X_t^{x,s,\nu}(y,z)| \leq C e^{-b (t-s)}$. Let us give one more example of the pattern, with the cross derivatives like $\nabla_y \wgrad X_t$:
 \begin{align*}
     \frac{d}{dt}\nabla_y \wgrad X_t^{x,s,\nu}(y) = \ &\E_{(1)}[\wgrad V(\mu_t^{s,\nu},X_t^{x,s,\nu},X_{t,(1)}^{s,\nu})\nabla_y\wgrad X_{t,(1)}^{s,\nu}(y)] \\
     &+\nabla_x V(\mu_t^{s,\nu},X_t^{x,s,\nu})\nabla_y\wgrad X_t^{x,s,\nu}(y) \\
     &+ \E[\nabla_2\wgrad V(\mu_t^{s,\nu},X_t^{x,s,\nu},X_{t,(1)}^{y,s,\nu})|\nabla_x X_{(1)}^{y,s,\nu}|^2] \\
     &+ \E[\wgrad V(\mu_t^{s,\nu},X_t^{x,s,\nu},X_{t,(1)}^{y,s,\nu})\nabla^2_x X_{(1)}^{y,s,\nu}]
 \end{align*}
since the last two terms decay exponentially, we can use the same calculations as above, using first $x=[\nu]$ and Lemma \ref{lem:monotonicity} displacement convexity of $V$ to find $\E[|\nabla_y \wgrad X_t^{s,\nu}(y)|^2]\leq ce^{-b(t-s)}$. In summary, derivatives of $X_{t}^{s,\nu}$ with respect to the measure or additional variables are controlled by applying Lemma \ref{lem:monotonicity} to the leading order term, whereas derivatives of $X_{t}^{x,s,\nu}$ are then controlled using $\nabla_x V \le -\lambda$ from Remark \ref{rk: UiT LSI}. 
\end{proof}

\section{Analysis of the remainder}
\label{sect: remainder}

Fix a time $T>0$ and define $R_T : \mathcal P_2(\R^d) \to [0,\infty)$ by
\begin{equation}
    R_T(\nu)= \int_0^1\int_{\R^d}\Big|\int_{\R^d\times \R^d} \delta_m^2  V(s\nu+(1-s)\mu_T,x,y,z)\,d(\nu-\mu_T)^{\otimes2}(y,z)\Big|^2\,d\nu(x)\,ds. 
\end{equation}
Recall that given the vector $Y_t=(Y_t^1,\dots,Y_t^n)$ solution to \ref{eq: particle SDE}, we define the empirical measure of the particle system 
\[m_t^n=\frac 1n \sum_{i=1}^n\delta_{Y_t^i}.\]
By exchangeability, the remainder $\mathcal R(t)$ defined in \eqref{def:remainder1} can be written as
\[
\mathcal R(t) = \E[R_t(m^n_t)].
\]
For this reason, the goal of this section is to show that $\E[R_T(m_T^n)] = O(1/n^2)$, in order to  prove Propositions \ref{pr: estimating remainder} and \ref{pr: remainder UIT}. It will be convenient to define the constant
\begin{equation}
\label{eq: definition of Theta}
    \Theta(t):=\|X^0_t\|_{\mathcal{C}^4}(1\vee \|X^0_t\|_{\mathcal{C}^4}^3)
\end{equation}
We will often use the flow property to identify $\|X^t_T\|_{\mathcal{C}^4}(1\vee \|X^t_T\|_{\mathcal{C}^4}^3)=\Theta(T-t)$.
\begin{remark} \label{rem:d=1}
    In dimension $d=1$ the analysis is particularly simple. Using Kantorovich duality twice and the definition of Wasserstein gradient,
    \begin{align*}
        \Big|&\int_{\R^d\times \R^d} \delta_m^2  V(s\nu+(1-s)\mu_T,x,y,z)\,d(\nu-\mu_T)(y)\,d(\nu-\mu_T)(z)\Big|\\
        &\leq \W_1(\nu,\mu_T)\sup_{z\in \R^d} \Big|\int_{\R^d}\nabla_z\delta_m^2 V(s\nu+(1-s)\mu_T,x,y,z)\,d(\nu-\mu_T)(y)\Big|\\
        &\leq \W_1^2(\nu,\mu_T)\sup_{y,z\in \R^d} \Big|\nabla_y\nabla_z\delta_m^2 V(s\nu+(1-s)\mu_T,x,y,z)\Big|\\
        &\leq \W_1^2(\nu,\mu_T)\|\wgrad^2 V\|_{\infty}.
    \end{align*}
    As a result, $\E[R_T(m_T^n)]\leq C\E\big[\W_1^4(m_T^n,\mu_T)\big]$. This quantity is well studied both in the short and long time regimes; focusing on the case of a bounded time horizon, a classical coupling argument shows that $\E\big[\W_1^4(m_T^n,\mu_T)\big]\leq C\E\big[\W_1^4(L_T^n,\mu_T)\big]$, where $L_T^n$ is the empirical measure associated to $n$ independent copies of the McKean--Vlasov law $\mu_T$. This term in turn  can be estimated by \cite[Theorem 2]{Fournier2015-yh} to obtain
    \[\E\big[\W_1^4(m_T^n,\mu_T)\big]\leq C/n^2,\]
    which allows us to close the estimate; notice that we are making use only of the first two Wasserstein derivatives. The same argument does not work in higher dimensions since the speed of convergence of empirical measures in Wasserstein distance deteriorates (see \cite[Theorem 1]{Fournier2015-yh}). This approach can be salvaged in higher dimension, by performing a higher-order Taylor expansion in the measure argument  to introduce three-way interactions (or four-way, etc.) which can be estimated hierarchically as we did with the pairwise interactions from the first-order expansion. But for $d>6$ this Taylor expansion requires more regularity than our argument based on weak propagation of chaos. 
\end{remark}

We will wish  to appeal to results from \cite{Buckdahn2017-op} and \cite{Chassagneux2022-jt} that apply to functions in $\mathcal C_{\text{bd}}^4(\mathcal P_2(\R^d))$. However, $R_T$ itself does not belong to $\mathcal C_{\text{bd}}^4(\mathcal P_2(\R^d))$, because $\delta_m^2V(m,x,y,z)$ may be unbounded in $(y,z)$. We thus perform an easy approximation procedure and pass to the limit at the end of the section. Let $\varphi \in C^\infty(\R^{3d})$ be an approximation of the indicator function of the unit ball in $\R^{3d}$; specifically, $0 \le \varphi \le 1$ with $\varphi\equiv 1$ on the unit ball and $\varphi \equiv 0$ outside the ball of radius $2$. Let $\varphi^N(x)=\varphi(x/N)$ for $N \in \N$. Define
\begin{equation*}
    r^N_T(s,\nu,x)= \int_{\R^d\times \R^d} \varphi^N (x,y,z)\delta_m^2  V(s\nu+(1-s)\mu_T,x,y,z)\,d(\nu-\mu_T)^{\otimes2}(y,z)
\end{equation*}
and set $R_T^N(\nu)=\int_0^1\int |r^N_T(s,\nu,x)|^2\,d\nu(x)\,ds$.
We begin with a useful lemma, which is essentially \cite[Lemma 2.7]{Chassagneux2022-jt}:

\begin{lemma}
    \label{lm: first derivatives are subpolynomial}
    For each $k \in \N$ and $f \in \mathcal C_{\textup{bd}}^k(\mathcal P_2(\R^d)\times \R^d)$ it holds 
    \begin{align*}
        \delta_m^k f(\mu,x,\boldsymbol{y})&= \prod_{i=1}^k y_i \int_{[0,1]^k} \wgrad^k f(\mu,x,q_1 y_1,\dots,q_k y_k)\,dq_1,\dots,d q_k  \\
        &\leq  \prod_{i=1}^k |y_i|\int_{[0,1]^k}|\wgrad^k f(\mu,x,y_1t_1,\dots,y_kt_k)|\,dt,
    \end{align*}
    where we treat the $k$-tensor $\wgrad^k f$ as a $d^k$-dimensional vector, and the norm is the Euclidean one.\end{lemma}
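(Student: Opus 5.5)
The plan is to prove the identity by induction on $k$, using the fundamental theorem of calculus in each new space variable together with the normalization convention $\delta_m f(\nu,0)=0$ fixed in Section \ref{se:mainresults}. The inequality then follows by bounding the contraction of the tensor against $y_1\otimes\cdots\otimes y_k$ by the product of norms.

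\emph{Case $k=1$.} Fix $\mu$ and $x$. The flat derivative $y\mapsto \delta_m f(\mu,x,y)$ is differentiable, with gradient $\wgrad f(\mu,x,y)$ by definition, and this gradient is bounded and Lipschitz for $f\in\mathcal C^1_{\textup{bd}}$. Since $\delta_m f(\mu,x,0)=0$, integrating along the segment $q\mapsto qy$ gives
\[
\delta_m f(\mu,x,y)=\int_0^1 \wgrad f(\mu,x,qy)\cdot y\,dq .
\]

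\emph{Inductive step.} Set $g=\delta_m^{k-1} f$, viewed as a function of $(\mu,x,y_1,\dots,y_{k-1})$. Then $\delta_m^k f(\mu,x,\boldsymbol y)=\delta_m g(\mu,x,y_1,\dots,y_{k-1},y_k)$, with normalization $0$ at $y_k=0$. Applying the $k=1$ argument in the variable $y_k$ writes $\delta_m^k f$ as $\int_0^1 \nabla_{y_k}\delta_m^k f(\mu,x,y_1,\dots,y_{k-1},q_ky_k)\cdot y_k\,dq_k$.

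The next step is to commute $\nabla_{y_k}$ past the earlier flat derivatives. This is justified by the commutation of space and flat derivatives cited after the definition of $\wgrad^k$ (\cite[Theorem 2.6]{Chassagneux2022-jt}, \cite[Lemma 5.4]{Buckdahn2017-op}). After commuting, the integrand becomes the flat derivatives in $y_1,\dots,y_{k-1}$ of $\nabla_{y_k}\delta_m g$, a function that again belongs to a $\mathcal C_{\textup{bd}}$ class.

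We then need this function to vanish whenever some $y_i=0$ with $i<k$. This holds because the normalization at step $i$ forces $\delta_m^i f(\cdot,y_i=0)\equiv 0$ identically in $\mu$. Any further flat or space derivative of an identically zero function is zero, and our choice of normalization is consistent with this: the zero function is a valid flat derivative, and normalization picks it. Repeating the FTC argument in $y_{k-1},\dots,y_1$ then produces the integral over $[0,1]^k$ of $\wgrad^k f(\mu,x,q_1y_1,\dots,q_ky_k)$ contracted with $y_1\otimes\cdots\otimes y_k$, which is the claimed identity.

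\emph{The inequality.} Viewing the tensor as a vector in $\R^{d^k}$, Cauchy--Schwarz gives $|\langle T,y_1\otimes\cdots\otimes y_k\rangle|\le |T|\prod_i|y_i|$. Moving the absolute value inside the $dq$-integral yields the stated bound.

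\emph{Main obstacle.} The delicate point is the bookkeeping of normalizations across iterated flat derivatives, that is, showing $\delta_m^k f$ vanishes when any $y_i=0$. This requires checking that the additive constant chosen at each stage does not interfere with the vanishing established at earlier stages. I would handle it by noting that the vanishing set $\{y_i=0\}$ is preserved under differentiation in $\mu$ and in the other $y_j$. Integrability of the $q$-integrals is not an issue, since $\wgrad^k f$ is bounded.
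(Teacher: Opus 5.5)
Your proposal is correct and takes the same route as the paper. The paper's proof is the one-line observation that the normalization $\delta_m f(\nu,0)=0$ forces $\delta_m^k f(\mu,x,\boldsymbol y)=0$ whenever some $y_i=0$, after which the identity is the iterated fundamental theorem of calculus along $q_i\mapsto q_iy_i$. You make explicit what the paper takes for granted: the normalization bookkeeping, and the commutation of space and flat derivatives giving $\nabla_{y_1}\cdots\nabla_{y_k}\delta_m^k f=\wgrad^k f$, which the paper assumes from the remark following the definition of $\wgrad^k$.
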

\begin{proof}
    By choice of normalization, $\delta_m^k  V (\mu,x,\boldsymbol{y})=0$ whenever at least one coordinate of $\boldsymbol{y} \in (\R^d)^k$ vanishes. The first identity follows immediately.
\end{proof}
We summarize the quartic properties of $R_T^N$.
\begin{lemma}
\label{lm: estimates on remainder}
    The following holds:
    \begin{enumerate}
    \item For every $N\in \N$, the function $R_T^N$ belongs to $\mathcal C^4_{bd}(\mathcal P_2(\R^d))$.
    \item $R_T^N(\mu_T)=\wgrad R_T^N(\mu_T,y)=\nabla_y \wgrad R_T^N(\mu_T,y)=\wgrad^2 R_T^N(\mu_T,y,z)=0$ for all $y,z$,
    \item There exist constants $C,p>0$ independent of $N$ such that for each multi-index $(\mathsf w,\boldsymbol{i})$ with size $0<k\leq 4$ we have
    \[|D^{\mathsf w,\boldsymbol{i}} R_T^N(\nu,\boldsymbol{y})|\leq C\bigg(1+\int_{\R^d}|x|^p \,d(\nu+\mu_T)(x)+|\boldsymbol{y}|^p\bigg).\]
    \item There exist constants $C,p>0$ independent of $N$ such that for each multi-index $(\mathsf w, \boldsymbol i)$ of size $k\leq 4$,
    \begin{align*}
         D^{\mathsf w,\boldsymbol{i}}P_t(R_T^N,\nu,\boldsymbol x)\leq C\Theta(t)\bigg(1+\int|z|^p \,d(\mu_t^{0,\nu}+\mu_T)(z)+\sum_{i=1}^k \int|z|^p\,d\mu_t^{x^i,0,\nu}(z)\bigg)
    \end{align*}
    \end{enumerate}
\end{lemma}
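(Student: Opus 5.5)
The plan is to treat $R_T^N$ as a composition of smooth operations on $\nu$ and to differentiate it explicitly in flat derivatives. Set $\nu_s := s\nu+(1-s)\mu_T$ and
\[
r^N_T(s,\nu,x)=\int \varphi^N(x,y,z)\,\delta_m^2 V(\nu_s,x,y,z)\,d(\nu-\mu_T)^{\otimes 2}(y,z).
\]
This is a product-type expression. It has a smooth dependence through $\nu_s$, which is affine in $\nu$, and a bilinear dependence through $(\nu-\mu_T)^{\otimes2}$. We then get $R_T^N(\nu)=\int_0^1\!\int |r^N_T(s,\nu,x)|^2\,d\nu(x)\,ds$.

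For item (1), I would compute the flat derivative by the product rule. It produces three kinds of terms. The first comes from differentiating the outer measure $d\nu(x)$, giving $|r^N_T(s,\nu,y)|^2$. The second comes from the square, giving $2r\cdot\delta_m r$. Within $\delta_m r$, there are terms from the bilinear factor, giving $2\int\varphi^N\delta_m^2V(\nu_s,x,y',z)\,d(\nu-\mu_T)(z)$ evaluated at the new variable $y'$, and terms from the argument $\nu_s$, giving $s\,\delta_m^3 V$ integrated against $(\nu-\mu_T)^{\otimes2}$. Iterating up to four times, and taking $y$-gradients to pass to Wasserstein derivatives, every term is a product of a cutoff derivative and a derivative $D^{\mathsf w,\boldsymbol i}V$ of order at most $6$. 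It is also integrated against at most two copies of $\nu-\mu_T$ and one copy of $\nu$. Because $\varphi^N$ has compact support, the growth in $(x,y,z)$ coming from Lemma \ref{lm: first derivatives are subpolynomial} is truncated. So each such term is bounded, and it is Lipschitz in $\W_2$ and in space, by the $\mathcal C^6_{\textup{bd}}$ assumption on $V$. This is routine but notationally heavy, and I would only display the first two orders and argue by induction for the rest.

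For item (2), the key observation is that $r^N_T(s,\cdot,x)$ is quadratic in $\nu-\mu_T$, so it vanishes at $\mu_T$ to second order. Hence $R_T^N$ vanishes to fourth order, and in particular to second order. Concretely, every term in the first two flat derivatives of $R_T^N$ contains at least one factor of $r^N_T$ or $\delta_m r^N_T$, and both vanish at $\nu=\mu_T$. Indeed, $r$ has two factors of $\nu-\mu_T$ and $\delta_m r$ has at least one. The only possible exception would be the term $\delta_m r\cdot\delta_m r$ in $\delta_m^2 R$, but it still contains a $(\nu-\mu_T)$ factor inside each $\delta_m r$. So all of these terms vanish, and taking $y$-gradients preserves the vanishing.

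For item (3), I would redo the bounds of (1) without using the cutoff to bound the variables. The gradients $\nabla\varphi^N$ carry a factor $1/N$ and are supported where $|(x,y,z)|\sim N$, so they are bounded by $C(1+|x|+|y|+|z|)/N\cdot N^{-0}\le C$ uniformly in $N$. Together with Lemma \ref{lm: first derivatives are subpolynomial}, this shows $|\delta_m^j V|\le C\prod|y_i|$ (plus linear growth in $x$). Each factor is then bounded by a polynomial in $|\boldsymbol y|$ and in the moments of $\nu$ and $\mu_T$, which gives the stated polynomial growth uniformly in $N$.

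Item (4) then follows directly from Lemma \ref{lm: bounds onf wgrad semigroup} applied to $f=R_T^N$ with the constant $A$ from (3). The bound there is $\|X_t^0\|_{\mathcal C^4}(1\vee\|X^0_t\|_{\mathcal C^4})^{k-1}\le\Theta(t)$, since $k\le4$. The $\mu_T$-moments enter only as additive constants, which gives the stated form.

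I expect the main obstacle to be bookkeeping in (1) and (3): organizing the combinatorics of up to four flat derivatives, then converting them to mixed Wasserstein and space derivatives, while checking that no term requires more than six derivatives of $V$.
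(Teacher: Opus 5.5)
Your proposal is correct and takes essentially the same route as the paper: for (1) you use the cutoff together with Lemma \ref{lm: first derivatives are subpolynomial}; for (2) you note that every term of the first two derivatives carries a factor of $r^N_T$ or its first derivative, both vanishing at $\mu_T$; for (3) you get uniform-in-$N$ polynomial growth from $\sup_N\|\nabla^j\varphi^N\|_\infty<\infty$; and for (4) you apply Lemma \ref{lm: bounds onf wgrad semigroup} directly. The only difference is cosmetic: in (2) you show that $\delta_m R^N_T(\mu_T,\cdot)$ and $\delta_m^2 R^N_T(\mu_T,\cdot,\cdot)$ vanish identically and then take space gradients, whereas the paper computes $\wgrad R^N_T$ through $\wgrad r^N_T$ and $\nabla_x r^N_T$, and the two are equivalent.
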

 \begin{proof} { \ } 
\begin{enumerate}
    \item By Lemma \ref{lm: first derivatives are subpolynomial} we have $|\delta_m^2 V(\nu,x,y,z)|\le \|\wgrad^2 V\|_\infty^2|y||z|$. Since $\varphi^N$ is supported on the ball of radius $2N$, we deduce $|\varphi^N\delta_m^2 V| \le 4N^2 \|\wgrad^2 V\|_\infty^2$. Similarly if we consider $\wgrad^k \delta_m^2 V(\nu,x,y,z,(u^1,\ldots,u^k),y,z)$ for some fixed $k\in\{1,2,3,4\}$ and $(x,u^1,\ldots,u^k)$, we have $|\wgrad^k \delta_m^2 V(\nu,y,z,(u^1,\dots,u^k))|\le \|\wgrad^{k+2} V\|_\infty^2|y||z|$, and we deduce that $\varphi^N\wgrad^k \delta_m^2 V$ is bounded. The $x$ derivatives of $\varphi^N\delta_m^2V$ are bounded similarly, and the claim follows readily.
    \item It is obvious that $R_T(\mu_T)=0$. Compute the Wasserstein gradient,
    \begin{align}
    \label{eq: wgrad rN}
    \begin{split}
        \wgrad r_T^N(s,\nu,x,u)&=s\int_{\R^{2d}}\varphi^N (x,y,z)\wgrad \delta_m^2  V(s\nu+(1-s)\mu_T,x,y,z,u)\,d(\nu-\mu_T)^{\otimes 2}(y,z)\\
        &+\int_{\R^d}\nabla_{y}\big[\varphi^N (x,u,z) \delta_m^2  V(s\nu+(1-s)\mu_T,x,u,z)\big]\,d(\nu-\mu_T)(z)\\
        &+\int_{\R^d}\nabla_{z}\big[\varphi^N (x,y,u)\delta_m^2  V(s\nu+(1-s)\mu_T,x,y,u)\big]\,d(\nu-\mu_T)(y).
    \end{split}
    \end{align}
    At $\nu=\mu_T$, this derivative vanishes.  Then we can check that
\begin{align*}
     \wgrad R_T^N(\nu,y)&=2\int_0^1 \nabla_x r_T^N(s,\nu,y) r^N_T(s,\nu,y)\,ds+2\int_0^1\int_{\R^d}\wgrad r_T^N(s,\nu,x,y)r_T^N(s,\nu,x)\,d\nu(x)\,ds.
 \end{align*}
 Note that $r_T^N(s,\nu,x)$, $\nabla_x r_T^N(s,\nu,x)$, and $\nabla_x^2 r_T^N(s,\nu,x)$ vanish at $\nu=\mu_T$. We thus deduce that $\wgrad R_T^N(\mu_T,\cdot)=0$, and differentiating the above shows also that $\nabla_y \wgrad R_T^N(\mu_T,y)=0$. Similar considerations show that $\wgrad^2 R_T^N(\mu_T,\cdot,\cdot) = 0$.
 \item We check that $r_T^N$ satisfies a similar property. Indeed by using Lemma \ref{lm: first derivatives are subpolynomial}, boundedness of $\varphi^N$ and $\wgrad^2  V$ and Young's inequality,
 \begin{align*}
 r_T^N(s,\nu,x)&\leq \int_{\R^d\times \R^d} \|\wgrad^2  V\|_\infty |y||z|\,d(\nu+\mu_T)^{\otimes2}(y,z)\\
 &\leq C\Big(\int |y|\,d\mu_T(y)\Big)^2+C \bigg(\int_{\R^d} |y|\,d\nu(y)\bigg)^2
 \end{align*}
 Similarly if we consider the first term of \eqref{eq: wgrad rN}, by Lemma \ref{lm: first derivatives are subpolynomial} and boundedness of $\wgrad^3 V,$
 \begin{align}
 \label{eq: wgrad r first term}
 \begin{split}
     \bigg|s\int_{\R^{2d}} &\varphi^N (x,y,z)\wgrad \delta_m^2  V(s\nu+(1-s)\mu_T,x,y,z,u)\,d(\nu-\mu_T)^{\otimes 2}(y,z)\bigg| \\
     &\leq \int_{\R^{2d}}\|\wgrad^3  V\|_\infty |y| |z|\,d(\nu+\mu_T)^{\otimes 2}(y,z)\\
     &\le C\Big(\int |y|\,d\mu_T(y)\Big)^2+C \bigg(\int_{\R^d} |y|\,d\nu(y)\bigg)^2
\end{split}
 \end{align}
 Considering instead the second term of \eqref{eq: wgrad rN} and using again Lemma \ref{lm: first derivatives are subpolynomial} and Young's inequality, we get
 \begin{align}
 \label{eq: wgrad r second term}
 \begin{split}
     \bigg|\int_{\R^{d}}&\nabla_y\varphi^N (x,u,z) \delta_m^2  V(s\nu+(1-s)\mu_T,x,u,z)\,d(\nu-\mu_T)(z)\bigg|\\
     &+\bigg|\int_{\R^{d}}\varphi^N (x,u,z) \nabla_y\delta_m^2  V(s\nu+(1-s)\mu_T,x,u,z)\,d(\nu-\mu_T)(z)\bigg| \\
     &\leq C|u|\int_{\R^{d}}\|\wgrad^2  V\|_\infty|z| \,d(\nu+\mu_T)(z)+C\int_{\R^{d}}\|\wgrad^2  V\|_\infty  |z|\,d(\nu+\mu_T)(z)\\
     &\leq C+|u|^2+ C\Big(\int |y|\,d\mu_T(y)\Big)^2+C \bigg(\int_{\R^d} |y|\,d\nu(y)\bigg)^2.
\end{split}
 \end{align}
Repeating the same reasoning yields that for every multi index $|\boldsymbol{i}|\leq 4$, for some $p\geq 1$ and universal constant $C$,
 \begin{align*}
     |D^{\mathsf w,\boldsymbol{i}} r_T^N(s,\nu,x,\boldsymbol{y})|\leq C\bigg[1+  \Big(\int_{\R^d} |z|\,d\nu(z)\Big)^p+\Big(\int_{\R^d} |z|\,d\mu_T(z)\Big)^p+|\boldsymbol{y}|^p\bigg]
 \end{align*}
 now the result follows by computing the derivatives of $R^N_T$ and using the bounds on $r^N_T$ and Jensen's inequality.
 \item Apply Lemma \ref{lm: bounds onf wgrad semigroup}.
 \end{enumerate}
 \end{proof}

\subsection{A semigroup interpolation of the remainder}

We next make use of the weak error expansion that is the main contribution of \cite{Chassagneux2022-jt}. 

\begin{proposition}
\label{pr: PDE on Wasserstein Space}
The function $P_t R^N_T$ is differentiable in time and belongs to $\mathcal C^4_{\textup{bd}}(\mathcal P_2(\R^d))$. Moreover, we have the following formula:
\begin{align}
\begin{split}
\label{eq: P_{T-t} f equation}
    \E[R^N_T(m_T^n)]&=\E[P_T R^N(m_0^n)]\\
    &+\frac{\sigma^2}{n}\int_0^T\bigg(\E[P_{t}g_{t}(m_0^n)]+\frac{\sigma^2}{n^2}\sum_{i=1}^n \int_0^t\E[\tr \wgrad^2  P_{t-s}g_t (m_s^n,Y_s^i,Y_s^i)]\,ds\bigg)\,dt
\end{split}
\end{align}
where we define $g_t : \mathcal P_2(\R^d) \to \R$ by
\begin{equation*}
    g_t(\nu)= \int_{\R^d} \tr\wgrad^2 P_{T-t} R_T^N(\nu,x,x)\,d\nu(x).
\end{equation*}
    \end{proposition}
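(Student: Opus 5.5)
The plan is to run the standard weak-error interpolation of \cite{Chassagneux2022-jt} twice. Regularity of $P_tR^N_T$ comes first. By Lemma \ref{lm: estimates on remainder}(1), $R^N_T\in\mathcal C^4_{\textup{bd}}(\mathcal P_2(\R^d))$. By Lemma \ref{lm: estimates on flows}, the flow $(x,\nu)\mapsto X_t^{x,0,\nu}$ has enough bounded derivatives, so the chain rule for $P_tf(\nu)=f(\mu^{0,\nu}_t)$ (as in the proof of Lemma \ref{lm: bounds onf wgrad semigroup}) gives $P_tR^N_T\in\mathcal C^4_{\textup{bd}}$, with bounds locally uniform in $t$. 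Time differentiability and the backward Kolmogorov equation
\[
\partial_t P_tf(\nu)=\int_{\R^d}\Big(V(\nu,x)\cdot\wgrad P_tf(\nu,x)+\sigma^2\tr\nabla_x\wgrad P_tf(\nu,x)\Big)\,d\nu(x)
\]
follow from the flow property \eqref{eq:MVflow} and time homogeneity \eqref{eq:MVtimehomo}. Concretely, I would write $P_{t+h}f(\nu)=P_tf(\mu^{0,\nu}_h)$ and apply the It\^o formula along flows of measures \cite[Chapter 5]{Carmonabook1} to $h\mapsto P_tf(\mu^{0,\nu}_h)$.

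Next I would interpolate $\Phi(t):=\E[P_{T-t}R^N_T(m^n_t)]$ for $t\in[0,T]$, so that $\Phi(T)-\Phi(0)$ equals $\E[R^N_T(m^n_T)]-\E[P_TR^N_T(m^n_0)]$. I apply It\^o's formula to the finite-dimensional function $(y^1,\dots,y^n)\mapsto P_{T-t}R^N_T(\frac1n\sum\delta_{y^i})$. Its spatial gradient in $y^i$ is $\frac1n\wgrad F(m,y^i)$. Its Hessian in $y^i$ is $\frac1n\nabla_y\wgrad F(m,y^i)+\frac1{n^2}\wgrad^2F(m,y^i,y^i)$. The drift and the $\frac1n\nabla_y\wgrad$ part reproduce exactly the generator above evaluated at $m^n_t$, and this cancels against $\partial_t P_{T-t}$. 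What survives, after taking expectations (the martingale part is a true martingale by the polynomial growth bounds of Lemma \ref{lm: estimates on remainder}(4) and the moment bounds of Lemma \ref{lm: T1 short time}), is
\[
\frac{d}{dt}\Phi(t)=\frac{\sigma^2}{n^2}\sum_{i=1}^n\E\big[\tr\wgrad^2P_{T-t}R^N_T(m^n_t,Y^i_t,Y^i_t)\big]=\frac{\sigma^2}{n}\E[g_t(m^n_t)].
\]
Integrating over $[0,T]$ gives the first-order expansion.

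The second step repeats this argument for $t$ fixed, on the function $g_t$. Here I use $\Psi(s)=\E[P_{t-s}g_t(m^n_s)]$ for $s\in[0,t]$, which gives
\[
\E[g_t(m^n_t)]=\E[P_tg_t(m^n_0)]+\frac{\sigma^2}{n^2}\sum_i\int_0^t\E[\tr\wgrad^2P_{t-s}g_t(m^n_s,Y^i_s,Y^i_s)]\,ds.
\]
Substituting this into the previous formula yields \eqref{eq: P_{T-t} f equation}.

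The main obstacle is justifying this second application. $g_t$ involves two Wasserstein derivatives of $P_{T-t}R^N_T$. Moreover, integrating against $\nu$ along the diagonal means a measure derivative of $g_t$ costs a derivative in the diagonal space variable as well. To get $g_t\in\mathcal C^2_{\textup{bd}}$ with polynomial growth, so that $\wgrad^2P_{t-s}g_t$ exists and is controlled, I therefore need the full four derivatives of $P_{T-t}R^N_T$. This is precisely why $\mathcal C^4$ bounds on the flow (Lemma \ref{lm: estimates on flows}) and six derivatives of $V$ are required. I would verify this by differentiating $g_t$ explicitly with the product rule and invoking Lemma \ref{lm: bounds onf wgrad semigroup} for each resulting term. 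I would also check that Fubini and dominated convergence are legitimate, using the $N$-independent growth bounds.
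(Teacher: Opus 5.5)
Your proposal is correct and follows essentially the same route as the paper. Both arguments use regularity and the backward Kolmogorov equation for $P_{T-t}R^N_T$, which you derive via the chain rule and flow property while the paper simply cites \cite{Chassagneux2022-jt} and \cite{Buckdahn2017-op}. Both then apply It\^o's formula to $P_{T-t}R^N_T(m^n_t)$, cancel the drift against the PDE, and repeat the same interpolation for $s\mapsto P_{t-s}g_t(m^n_s)$. Your observation that this second step needs four derivatives of $P_{T-t}R^N_T$, so that $g_t$ is $\mathcal C^2$, is a point the paper leaves implicit.
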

\begin{proof}
Regularity of $P_t R_T^N$ comes from \cite[Theorem 2.18]{Chassagneux2022-jt} since $R_T^N$ and $ V$ both have $4$ derivatives in space and measure. Fix $T>0$; by \cite[Theorem 7.2]{Buckdahn2017-op}, $P_{T-t}R^N_T $ satisfies (in the classical sense) 
\begin{align} 
        \partial_t P_{T-t}R_T^N(\nu)+ \int_{\R^d}\Big(\wgrad P_{T-t}R_T^N(\nu,x)\cdot  V(\nu,x)+\nabla_x \cdot \wgrad P_{T-t} R_T^N (\nu,x)\Big) \,d\nu(x)&=0, \label{eq: PDE on Wasserstein space}
\end{align}
for $\nu \in \mathcal P_2(\R^d)$ and $t \in (0,T)$.
By the same reasoning, $s\mapsto P_{T-s} P_{T-t} f$ solves the same PDE on $[0,t]$ (with the appropriate terminal condition).
    We follow the steps of \cite[Lemma 2.11(ii)]{Chassagneux2022-jt} to obtain a second order expansion of $f$. Observe that $P_{T-t} f(m_t^n)$ is a function of the vector $Y_t$. Use Ito's rule on $P_{T-t} f (m_t^n)$ to see
    \begin{align*}
        dP_{T-t} &R_T^N(m_t^n)=\partial_t P_{T-t} R_T^N(m_t^n)\,dt+\int_{\R^d} \wgrad P_{T-t} R_T^N(m_t^n,x)\cdot  V(m_t^n,x)\,dm_t^n(x)\,dt\\
        &+\sigma^2\int_{\R^d}\nabla_x\cdot \wgrad P_{T-t} R_T^N(m_t^n,x)\,dm_t^n(x)\,dt\\
        &+\frac{\sigma\sqrt2}{n}\sum_{i=1}^n \wgrad P_{T-t} R_T^N(m_t^n,Y_t^i)\cdot dW^i_t + \frac{\sigma^2}{n^2}\sum_{i=1}^n \tr\wgrad^2 P_{T-t} R_T^N(m_t^n,Y_t^i,Y_t^i)\,dt.
    \end{align*}
    The first three $dt$ terms combine to zero, using the PDE \eqref{eq: PDE on Wasserstein space}. Notice that the stochastic integral is a true martingale since $\wgrad P_{t} R_T^N$ is bounded uniformly in $t$. Integrate and take expectations to get  
\begin{align*}
    \E[R_T^N(m_T^n)]=\E[P_T R_T^N(m_0^n)]+\frac{\sigma^2}{n} \int_0^T \frac{1}{n}\sum_{i=1}^n \E[ \tr\wgrad^2P_{T-t} R_T^N(m_t^n,Y_t^i,Y_t^i)]\,dt
\end{align*}
Notice that the integrand in the last term is precisely $\E[g_t(m_t^n)]$.
We can thus use Ito's rule again on the flow $s\mapsto P_{t-s} g_t(m_s^n)$ to find
\begin{align*}
\E[g_t(m_t^n)] = \E[P_t g_t(m_0^n)]+\frac{\sigma^2}{n^2}\sum_{i=1}^n \int_0^t\E[ \tr\wgrad^2 P_{t-s} g_{t}(m_s^n,Y_s^i,Y_s^i)]\,ds.
\end{align*}
Combine the above two equations to complete the proof.
\end{proof}

We will need some regularity estimates on the functions $g_t$ and $P_sg_t$, which easily follow from Lemma \ref{lm: bounds onf wgrad semigroup}:

\begin{corollary}
\label{cr: estimates on g} 
There exist $p,q,r\geq 1, C>0$ such that, for every $T \ge t \ge s \ge 0$ and $y_1,y_2 \in \R^d$,
    \begin{align*}
    |\wgrad g_t(\nu,y_1)| &+ |\wgrad^2 g_t(\nu,y_1,y_2)| + |\nabla_y\wgrad g_t(\nu,y_1)| \\ &\leq C \Theta(T-t) \int_{\R^d}(1+|x|^p) \,d(\mu_T+\mu_T^{t,\nu}+\mu_T^{y,t,\nu})(x)  \\
    |\wgrad P_s g_t(\nu,y_1)|&+ |\wgrad^2 P_s g_t(\nu,y_1,y_2)| + |\nabla_y\wgrad P_s g_t(\nu,y_1)| \\ & \leq  C\Theta(s)\Theta(T-t)\int_{\R^d}(1+|x|^p) \,d(\mu_T+\mu_{T-(t-s)}^{0,\nu}+\mu_{T-(t-s)}^{y_1,0,\nu}+\mu_{T-(t-s)}^{y_2,0,\nu})(x) \end{align*}
\end{corollary}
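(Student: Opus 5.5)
The plan is to obtain both estimates by applying Lemma \ref{lm: bounds onf wgrad semigroup} twice, with Lemma \ref{lm: estimates on remainder}(4) and the flow property \eqref{eq:MVflow} providing the input at each stage.

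\textbf{Step 1: derivatives of $g_t$.} Write $h:=P_{T-t}R^N_T$, so that $g_t(\nu)=\int \tr\wgrad^2 h(\nu,x,x)\,d\nu(x)$. By the usual differentiation rule for integrals against $\nu$,
\[
\wgrad g_t(\nu,y)=\nabla_y\big[\tr\wgrad^2 h(\nu,y,y)\big]+\int \tr\wgrad\big(\wgrad^2 h\big)(\nu,x,x,y)\,d\nu(x).
\]
Differentiating once more, in $\nu$ or in $y$, produces derivatives of $h$ of total order at most $4$. Lemma \ref{lm: estimates on remainder}(4), applied with $t$ replaced by $T-t$, bounds each of these by
\[
C\Theta(T-t)\Big(1+\int|z|^p\,d(\mu^{0,\nu}_{T-t}+\mu_T)+\sum_j\int |z|^p\,d\mu^{x^j,0,\nu}_{T-t}\Big).
\]
By time-homogeneity \eqref{eq:MVtimehomo}, $\mu^{0,\nu}_{T-t}=\mu^{t,\nu}_T$ and $\mu^{y,0,\nu}_{T-t}=\mu^{y,t,\nu}_T$.

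The terms in which $x$ is integrated against $\nu$ produce contributions of the form $\int\int|z|^p\,d\mu^{x,t,\nu}_T(z)\,d\nu(x)$. This integral is exactly $\int |z|^p\,d\mu^{t,\nu}_T$, because mixing $\mu^{x,t,\nu}_T$ over $x\sim\nu$ gives back the McKean--Vlasov law. Collecting terms, and absorbing the constant $1$ into $\int(1+|x|^p)$ of a probability measure, gives the first estimate.

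\textbf{Step 2: derivatives of $P_sg_t$.} Step 1 shows that $g_t$ satisfies the growth hypothesis of Lemma \ref{lm: bounds onf wgrad semigroup}. The bound there is not quite of the form $A(1+|\boldsymbol y|^p+\int|z|^p\,d\nu)$, but Lemma \ref{lem:flow moment bounds} (or Lemma \ref{lem:flow moment bounds UIT} in the uniform-in-time setting) converts it into that form. Indeed, $\int|z|^p\,d\mu^{y,t,\nu}_T\le C(1+|y|^p)$ and $\int|z|^p\,d\mu^{t,\nu}_T\le C(1+\int|z|^p\,d\nu)$, so we may take $A=C\Theta(T-t)$.

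The lemma is stated for $C^4_{\rm bd}$ functions. That $g_t\in C^2_{\rm bd}$ follows from Proposition \ref{pr: PDE on Wasserstein Space}, since $P_{T-t}R^N_T\in C^4_{\rm bd}$ and we only need derivatives of order at most $2$ of $g_t$; the lemma's proof works for any order $k$ at which the needed derivatives exist. Applying it with time $s$ gives the factor $\Theta(s)$, since $\|X^0_s\|_{\mathcal C^4}(1\vee\|X^0_s\|_{\mathcal C^4})^{k-1}\le\Theta(s)$ for $k\le 2$ up to constants. It also gives moments of $\mu^{0,\nu}_s$ and $\mu^{y_j,0,\nu}_s$ raised to a power $p'$.

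To reach the stated form, I would instead keep the finer Step 1 bound, with measures $\mu^{t,\nu'}_T$ and $\mu^{y,t,\nu'}_T$ evaluated at $\nu'=\mu^{0,\nu}_s$. I would then redo the proof of Lemma \ref{lm: bounds onf wgrad semigroup}: its Jensen step evaluates the derivatives of $g_t$ at the random points $X^{y,0,\nu}_s$ and $X^{0,\nu}_s$. By the flow property, averaging $\mu^{t,\mu^{0,\nu}_s}_T$ gives $\mu^{0,\nu}_{T-(t-s)}$. Likewise, averaging $\mu^{X^{y,0,\nu}_s,t,\mu^{0,\nu}_s}_T$ over the randomness of $X^{y,0,\nu}_s$ gives $\mu^{y,0,\nu}_{T-(t-s)}$, by the Markov property of the decoupled process. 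This is the step I expect to be the main (bookkeeping) obstacle: the flow property has to be applied to the point-started process, so one must check that the decoupled system \eqref{eq: linearized process} composes correctly under the shift of times. The coarser moment-based route via Lemma \ref{lem:flow moment bounds} is a fallback that yields an equivalent bound up to constants.
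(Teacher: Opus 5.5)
Your proposal is correct and follows the paper's proof. You differentiate $g_t$ by the chain rule, bound the resulting derivatives of $P_{T-t}R^N_T$ via Lemma \ref{lm: estimates on remainder}(4) and time-homogeneity, then feed that finer bound into the proof of Lemma \ref{lm: bounds onf wgrad semigroup} and conclude with the flow property. Your handling of the point-started term, averaging $\mu_T^{x,t,\mu_s^{0,\nu}}$ over $x\sim\mu_s^{y,0,\nu}$ via the Markov property to obtain $\mu_{T-(t-s)}^{y,0,\nu}$, is the correct reading of the step the paper writes loosely as $\mu_T^{t,\mu_s^{y,0,\nu}}$.
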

\begin{proof}
    By the chain rule and symmetry of Wasserstein derivatives,
    \begin{align*}
        (\wgrad g_t(\nu,y))_i = \partial_{y_i}\big[ \tr \wgrad^2  P_{T-t} R^N_T(\nu, y,y) \big] +  \int_{\R^d}\tr \wgrad^2 (\wgrad P_{T-t}R^N_T(\nu,x,x,y))_i\,d\nu(x).
    \end{align*}
    Use Lemma \ref{lm: bounds onf wgrad semigroup}, Lemma \ref{lm: estimates on remainder}(4) and H\"older's inequality we obtain,
    \begin{align*}
        |\wgrad g_t(\nu,y)|\leq  C\Theta(T-t) \Big(1+\int_{\R^d}|x|^p\,d(\mu_{T-t}^{y,0,\nu}+\mu_{T-t}^{0,\nu}+\mu_T)(x)\Big).
    \end{align*}
    The reasoning for the second derivatives is the same. The claimed bounds on the derivatives of $g$ now follow from the time-homogeneity \eqref{eq:MVtimehomo}. To study derivatives applied to the semigroup, use Lemma \ref{lm: bounds onf wgrad semigroup} and the first claim to see
    \begin{align*}
        |\wgrad P_s g_t(\nu,y)| &\le C\Theta(s)\Theta(T-t) \Big(1+\int_{\R^d}|x|^p\,d(\mu_T^{t,\mu_s^{0,\nu}}+\mu_T^{t,\mu_s^{y,0,\nu}}+\mu_T)(x)\Big) \\
        &\le C\Theta(s)\Theta(T-t) \Big(1+\int_{\R^d}|x|^p\,d(\mu_{T-(t-s)}^{0,\nu}+\mu_{T-(t-s)}^{y,0,\nu}+\mu_T)(x)\Big),
    \end{align*}
    where the last step used the  time-homogeneity \eqref{eq:MVtimehomo} and flow properties \eqref{eq:MVflow}. The reasoning for higher order derivatives is the same.
\end{proof}

\subsection{Proof of Propositions \ref{pr: estimating remainder} and \ref{pr: remainder UIT}}
We fix $T > 0$ and prove both propositions together by estimating $\E[R_T^N(m^n_T)]  \le C/n^2$, for a constant $C$ that does not depend on $n$ or $N$, and in the case of Proposition \ref{pr: remainder UIT} does not depend on $T$ either. The same bound will apply to $\mathcal R(t)=\E[R_T(m^n_T)]$ by sending $N\to\infty$, noting that dominated convergence applies because $\delta_m^2V$ has quadratic growth in the spatial variables by Lemma \ref{lm: first derivatives are subpolynomial}, and because of the moment bounds of Lemma \ref{lm: T1 short time} and Proposition \ref{pr: LSI}. We begin by showing that, for $y,z  \in \R^d$ and $t\in[0,T]$, 
     \begin{equation}
\label{eq: derivative of semigroup of R}
     \wgrad P_{T-t} R^N_T(\mu_t,y) =  0, \qquad \wgrad^2 P_{T-t}R^N_T(\mu_t,y,z) =0.
\end{equation} 
We do this for $d=1$ to simplify notation.
To prove the first, use \cite[Equation (6.3)]{Buckdahn2017-op} to compute
\begin{align*}
    \wgrad P_{T-t} R^N_T(\nu,y)= \E\big[\wgrad R^N_T(\mu_T^{t,\nu}, X_T^{y,t,\nu})\nabla_x  X_T^{y,t,\nu}+\wgrad R^N_T(\mu_T^{t,\nu},  X_T^{t,\nu}) \wgrad  X_T^{ t,\nu}(y)\big]
\end{align*}
Recalling $\mu_T^{t,\mu_t}=\mu_T^{0,\mu_0}=\mu_T$ by the flow property \eqref{eq:MVflow}, and also that $\wgrad R(\mu_T,y)=0$ for every $y$ by Lemma \ref{lm: estimates on remainder}, the first equation of \eqref{eq: derivative of semigroup of R} follows. 
To differentiate again to compute $\wgrad^2 P_{T-t}R^N_T(\nu,y,z)$ is more cumbersome, but the method is explained in detail in \cite[Lemma 6.2]{Buckdahn2017-op}. It is a sum of many terms, and the only important point for our purposes is that each term contains a factor of $\wgrad R_T^N(\mu_T^{t,\nu},y)$, $\nabla_y \wgrad R_T^N(\mu_T^{t,\nu},y)$, or  $\wgrad^2 R_T^N(\mu_T^{t,\nu},y,z)$. When applied with $\nu=\mu_t$, the flow property $\mu_T^{t,\mu_t}=\mu_T$ combined with Lemma \ref{lm: estimates on remainder} proves the second identity of  \eqref{eq: derivative of semigroup of R}.
One consequence of \eqref{eq: derivative of semigroup of R}, which we will only need for $t=0$, is that 
    \begin{align*}
        0=\wgrad^2 P_T R^N_T(\mu_0,x,y)&=  \nabla_x \nabla_y \delta_m^2 P_T R^N_T(\mu_0,x,y).
    \end{align*}
    Hence, $\delta_m^2 P_{T} R_T^N(\mu_0,x,y)=f(x)+g(y)$ for some functions $f,g$.

    Now, to estimate $\E[R^N_T(m^n_T)]$, we will use the expansion from Proposition \ref{pr: PDE on Wasserstein Space}, and separately estimate each of the three terms on the right-hand side of \eqref{eq: P_{T-t} f equation}. We begin with $\E[P_TR^N_T(m^n_0)]$. We perform a  second-order Taylor expansion of $R^N_T(m^n_0)$ around $R^N_T(\mu_0)$, which was developed in \cite{Chassagneux2022-jt}. Specifically, we apply the final formula from the proof of \cite[Theorem 2.14(ii)]{Chassagneux2022-jt}:
    \begin{align*}
        \E[P_T R^N_T(m_0^n)] &= P_T R^N_T(\mu_0) +  \frac{1}{2n}\int_{\R^{2d}} \delta^2_m P_T R^N_T(\mu_0,x,x)-\delta_m^2 P_T R^N_T(\mu_0,x,y) \,d\mu(x)\,d\mu(y)+ \frac{A}{n^2},
    \end{align*}
    for a quantity $A$ to be defined shortly. The key point here is that the  zero and first order terms vanish. Indeed, $P_T R^N_T(\mu_0)=R^N_T(\mu_T)=0$, and the formula $\delta_m^2 P_{T} f(\mu_0,x,y)=f(x)+g(y)$ implies that the integral vanishes. Hence,
    \begin{equation}
        \E[P_T R^N_T(m_0^n)] = A/n^2, \label{eq:PTRNT1}
    \end{equation}
    To finally define $A$, let us recall that $Y^1_0,\ldots,Y^n_0$ are iid with law $\mu_0$, and we let $\widetilde{Y}_0^1,\dots,\widetilde{Y}_0^n$ denote additional iid copies from $\mu_0$. For $q \in [0,1]^2$ and $u \in [0,1]^3$  define
    \begin{align*}
        \eta_{q}&= \mu_0+q_1(m_0^n-\mu_0)+\frac{q_1 q_2}{n}(\delta_{\widetilde{Y}_0^1}-\delta_{Y^1_0}), \\
        \theta_{u}&=\eta_{u_1,u_2}+\frac{u_3}{n}\Big(\mu_0+u_2(m_0^n-\mu_0)+\frac{u_1}{n}(\delta_{\widetilde{Y}_0^2}-\delta_{Y_0^2})-\eta_{u_1,u_2}\Big).
    \end{align*}
Then, $A$ is defined by 
\begin{align*}
        A &:= \frac12 A_1 + \frac12 A_2 + \frac{n-1}{2n}A_3, \\
        A_1 &:= \E\Big[\int_{[0,1]^2}\int_{\R^{2d}} (1-q_1^2)q_2 \delta^3_m P_T R^N_T(\eta_{q},\widetilde{Y}_0^1,x,y)d(\delta_{\widetilde{Y}_0^1}-\delta_{Y_0^1})^{\otimes 2}(x,y)\,dq\Big],  \\
        A_2 &:= \E\Big[\int_{[0,1]^2}\int_{\R^{2d}} (1-q_1^2) \delta^3_m P_T R^N_T(\eta_{q},\widetilde{Y}_0^1,x,y)d(\delta_{\widetilde{Y}_0^1}-\delta_{Y_0^1})(x)\,d(\delta_{Y_0^1}-\delta_{\widetilde{Y}_0^2})(y)\,dq\Big], \\
        A_3 &:=  \E\Big[\int_{[0,1]^3}\int_{\R^{2d}} (1-u_1^2)u_1 \delta^4_m P_T R^N_T(\theta_{u},\widetilde{Y}_0^1,x,\widetilde{Y}_0^2,y) \, d(\delta_{\widetilde{Y}_0^1}-\delta_{Y_0^1})(x)\,d(\delta_{\widetilde{Y}_0^2}-\delta_{\widetilde{Y}_0^3})(y)\,du\Big]. 
    \end{align*}
    We will show that $|A|$ is bounded, uniformly in $N$ and $n$.
    To this end, notice that both $\eta_q$ and $\theta_u$ are mixtures of the subgaussian measure $\mu_0$ with the random measures $m_0^n,\delta_{Y_0^1},\delta_{\widetilde{Y}_0^i}$ which are subgaussian in expectation. Thus, it follows that for some constant that does not depend on time or $n$,
    \begin{align}
    \label{eq: bounds on eta and theta}
        \sup_{u,q}\E\Big[\int_{\R^d} |x|^p\,d(\eta_q+\theta_u)(x)\Big]\leq C<\infty.
    \end{align}
    We first show how to estimate  $|A_1|$, with the other two terms being similar. Use Jensen's inequality, Lemma \ref{lm: first derivatives are subpolynomial} and symmetry of the Wasserstein derivatives to find  
    \begin{align*}
        |A_1| &\leq 2\E\Big[\int_{[0,1]^4} \big|\wgrad^3 P_T R^N_T(\eta_{q},\widetilde{Y}_0^1,r_1\widetilde{Y}_0^1,r_2Y_{0})\big||\widetilde{Y}_0^1|^2|Y^1_{0}|   \,dq\,dr\Big]\\
        &\quad +\E\Big[\int_{[0,1]^4} \big|\wgrad^3 P_T R^N_T(\eta_{q},\widetilde{Y}_0^1,r_1\widetilde{Y}_0^1,r_2\widetilde{Y}_0^1)\big||\widetilde{Y}_0^1|^3 \,dq\,dr\Big]\\
        &\quad +\E\Big[\int_{[0,1]^4} \big|\wgrad^3 P_T R^N_T(\eta_{q},\widetilde{Y}_0^1,r_1Y_{0},r_2Y_{0})\big||\widetilde{Y}_0^1||Y_{0}|^2  \,dq\,dr\Big].
    \end{align*}
    By Lemma \ref{lm: bounds onf wgrad semigroup}, the fact that $Y^i_0$ and $\widetilde{Y}^i_0$ are iid $\sim\mu_0$, H\"older's inequality, Lemma \ref{lm: estimates on remainder}(4) and Jensen's inequality we get for some $p\in \N$,
    \begin{align} 
    \label{eq: bound on A1}    
    |A_1|
    &\leq C\Theta(T)
     \E\bigg[\int_{[0,1]^4}\int_{\R^d}1+ |x|^{2p} \,d(\mu_T^{r_1Y_0^1,0,\eta_q}+\mu_T^{Y_0^1,0,\eta_q}+\mu_T^{0,\eta_q}+\mu_T)(x)\,dr\,dq\bigg] .
    \end{align}
     Here $C$ can depend on $\E[|Y_0^1|^6]$.
    The reasoning for $A_2$ is identical, and for $A_3$ the only difference is that we are working with the $4$-th flat derivative, and we thus obtain a bound in terms of $\wgrad^4 P_T R^N_T$ and the measure $\theta_{u}$. Ultimately,  recalling  \eqref{eq:PTRNT1}, we obtain
    \begin{align}
        \label{bound on P_T R}
        \begin{split}
            \E[P_T R^N_T(m_0^n)] &\leq  \frac{C }{n^2} \Theta(T)\E\bigg[1 + \int_{[0,1]^5} \int_{\R^d}|x|^{p}\,d(\mu_T+\mu_T^{Y_0^1,0,\eta_q}+\mu_T^{Y_0^1,0,\theta_u})(x)\,dq\,du\\
            &\quad +  \E\int_{[0,1]^6} \int_{\R^d}|x|^p\,d(\mu_T^{rY_0^1,0,\eta_q}+\mu_T^{rY_0^1,0,\theta_u}+\mu_T^{0,\eta_q}+\rho_T^{0,\theta_u})(x)\,dr\,dq\,du \bigg]. 
    \end{split}
    \end{align}
    The constants $C$ and $p$ do not depend on $N$, $n$, or $T$.
    
We next study the two $g_t$ terms from Proposition \ref{pr: PDE on Wasserstein Space}. We will again Taylor expand $P_tg_t(m^n_0)$ around $\mu_0$, but now we need only to do so at first order. Since $\wgrad^2 P_{T-t} R^N_T (\mu_t,\cdot,\cdot)=0$ by \ref{eq: derivative of semigroup of R} we obtain $g_t(\mu_t)=0$, and thus $P_tg_t(\mu_0)=g_t(\mu_t)=0$ by the flow property \eqref{eq:MVflow}. This is the zero order term, and so the Taylor expansion  from  \cite[Equation (2.25)]{Chassagneux2022-jt} yields
\begin{align}
    \E[P_t g_t(m_0^n)]&= \frac 1n \int_{[0,1]^2} q_2 \E\big[\delta_m^2 P_t g_t(\eta_{q},Y_0^1,Y^1_0)-\delta_m^2 P_t g_0(\eta_{q},Y^1_0,\widetilde{Y}_0^1)\big]\,dq  \nonumber  \\
    &\leq \frac 1n \int_{[0,1]^2}\E\big[|\delta_m^2 P_t g_t(\eta_{q},Y_0^1,Y_0^1)|+|\delta_m^2P_t g_t(\eta_{q},Y_0^1,\widetilde{Y}_0^1)|\big]\,dq \nonumber \\
    &\leq \frac{1}{n} \E \Big[\int_{[0,1]^4}|\wgrad^2 P_t g_t(\eta_{q},r_1Y_0^1,r_2 Y_0^1)||Y_0^1|^2\,dr\,dq\Big]  \nonumber \\
    &\quad +\frac{1}{n} \E \Big[\int_{[0,1]^4}|\wgrad^2 P_t g_t(\eta_{q},r_1Y_0^1,r_2 \widetilde{Y}_0^1)||Y_0^1||| \widetilde{Y}_0^1| \,dr\,dq\Big]. \label{eq: bound on g}
\end{align}
The last step used Lemma \ref{lm: first derivatives are subpolynomial}.
Use Cauchy-Schwarz, Lemma \ref{lm: bounds onf wgrad semigroup}, and Corollary \ref{cr: estimates on g} to find 
\begin{align*}
    &\E[P_tg_t(m_0^n)]\\
    &\leq 
    \frac{C}{n}\Theta(t)\Theta(T-t)  \E\Big[\int_{[0,1]^4}\int_{\R^d}(1+|x|^p)\,d(\mu_T^{0,\eta_q}+\mu_T^{r_1 Y_0^1,\eta_q}+\mu_T^{r_2Y_0^1,0,\eta_q}+\mu_T)(x)drdq\Big]
\end{align*}
for constants $C$ and $p$ which do not depend on $N$, $n$, or $T$.
To estimate the final $g_t$ term from Proposition \ref{pr: PDE on Wasserstein Space}, there is no Taylor expansion needed, and we simply apply Corollary \ref{cr: estimates on g} to obtain
\begin{align}
\begin{split}
    \label{eq: bound on wgrad g}
    \frac{1}{n} &\sum_{i=1}^n \E[\tr\wgrad^2 P_{t-s}g_t (m_s^n,Y_s^i,Y_s^i)]= \E\Big[\int_{\R^d}\tr\wgrad^2P_{t-s}g_t(m_s^n,y,y)\,dm_s^n(y)\Big]
    \\&\leq \frac {C}{n}\Theta(t-s)\Theta(T-t) \E\Big[\int_{\R^d} \int_{\R^d} (1+|x|^p)\,d(\mu_T + \mu_{T-s}^{0,m_s^n}+\mu_{T-s}^{y,0,m_s^n})(x)  \,dm_s^n(y)\Big].
\end{split}
\end{align}
Plugging the estimates \eqref{bound on P_T R}, \eqref{eq: bound on g}, and \eqref{eq: bound on wgrad g} into \eqref{eq: P_{T-t} f equation} yields
\begin{align*}
    \E&[R^N_T(m_T^n)]\leq \frac{C }{n^2}\Theta(T) \E\bigg[1+\int_{[0,1]^5}\int_{\R^d}|x|^p \,d(\mu_T^{rY_0^1,0,\eta_q}+\mu_T^{rY_0^1,0,\theta_u}+\mu_T)(x)\,dq\,du\bigg] \\
    &+ \frac{C }{n^2}\Theta(T)\E\Big[\int_{[0,1]^6}\int_{\R^d}|x|^p\,d(\mu_T^{0,\eta_q}+\mu_T^{0,\theta_u}+\mu_T^{Y_0^1,0,\eta_q}+\mu_T^{Y_0^1,0,\theta_u})(x)\,dq\,du\Big]  \\
    &+\frac{C }{n^2}\int_0^T \Theta(t)\Theta(T-t) \E\Big[\int_{[0,1]^4}\int_{\R^d} (1+|x|^p)\,d(\mu_T^{0,\eta_q}+\mu_T^{rY_0^1,0,\eta_q})(x)\,dr\,dq\Big]\,dt\\
    &+\frac{C}{n^2}\int_0^T \int_0^t \Theta(t-s)\Theta(T-t)\E\Big[\int_{\R^d}\int_{\R^d}(1+|x|^p)\,d(\mu_T+\mu_{T-s}^{0,m_s^n}+\mu_{T-s}^{y,0,m_s^n})(x) \,dm_s^n(y)\Big]\,ds\,dt.
\end{align*}

Notice that under Assumption \ref{assumption on Phi} we have
\begin{equation}
\sup_{0 \le s \le T}\E\Big[\int_{\R^d} |x|^p\,dm^n_s(x)\Big] = \sup_{0 \le s \le T}\E\big[|Y^1_s|^p\big] < \infty, \label{pf:momentbound5}
\end{equation}
by Lemma \ref{lm: T1 short time},
and under Assumption \ref{Assumption: UiT}, we have
\begin{equation}
\sup_{s \ge 0}\E\Big[\int_{\R^d} |x|^p\,dm^n_s(x)\Big] = \sup_{s \ge 0}\E\big[|Y^1_s|^p\big] < \infty, \label{pf:momentbound5uit}
\end{equation}
by Proposition \ref{pr: LSI}.
Now, to complete the proof of Propositions \ref{pr: estimating remainder}, under merely Assumption \ref{assumption on Phi}, we use the moment bounds from Lemma \ref{lem:flow moment bounds} and \eqref{pf:momentbound5}, and the estimates on $\|X_t^s\|_{\mathcal C^4}$ from \ref{lm: estimates on flows}; in particular, recalling the definition of $\Theta$ \eqref{eq: definition of Theta}, under assumption \ref{assumption on Phi} we know that for $t\leq T, \Theta(t)\leq C$ for some constant $C$ that depends on $T$. Along with  \eqref{eq: bounds on eta and theta}, and recalling $Y^1_0$ is subgaussian, this yields the desired $\E[R^N_T(m_T^n)] \le C/n^2$ for a constant $C$ that can depend on $T$ but not on $N$ or $n$.
If we also assume \ref{Assumption: UiT}, then we complete the proof of Proposition \ref{pr: remainder UIT} similarly. The moment bounds are uniform in time by Lemma \ref{lem:flow moment bounds UIT} and \eqref{pf:momentbound5uit}, and we have the exponential decay  $\Theta(t) \le Ce^{-bt}$ from Proposition \ref{pr: decay of wgrad X}. This allows us to obtain $\E[R^N_T(m_T^n)] \le C/n^2$ as above, but with the constant $C$ no longer depending on $T$. \hfill\qedsymbol
\begin{remark}
\label{rk: long time weak chaos}
By inspecting what properties of the function $R_T$ were used in the proofs of Propositions \ref{pr: remainder UIT}, we see that a more general statement holds.
Let $\Phi \in \mathcal C_{\text{bd}}^4(\R^d)$ be such that $|\wgrad \Phi(\mu_T,\cdot)|=|\wgrad^2 \Phi(\mu_T,\cdot,\cdot)|=0$. Then, under assumption \ref{Assumption: UiT}, we have
    \begin{align*}
        \sup_{t\geq 0}|\E[\Phi(m_t^n)]-\Phi(\mu_t)|\leq \frac C{n^2}.
    \end{align*}
    This has implications for stationary measures of Wasserstein gradient flows. For instance, let $V(\nu,x)=-\wgrad \Psi(\nu,x)$ for some $\Psi$ that satisfies the assumptions of Corollary \ref{co: Mean Field Langevin Dynamics}. By \cite[Proposition 4.4]{ChaosMFLD}, the fixed point problem for $\rho \in \mathcal P_2(\R^d)$
    \[\rho(dx)\propto \exp\{-\delta_m \Psi(\rho,x)\}\,dx\]
    has a unique solution that is also the stationary measure for \eqref{eq: MFLD}. On the other hand, by \cite[Proposition 4.3]{ChaosMFLD}, the law of the particle system \eqref{eq: MFLD particles} converges to a stationary measure $\pi_\infty^n\in \mathcal P_2((\R^d)^n)$ given by $\pi^n_\infty(dx) \propto \exp(-n\Psi(m^n_x))dx$ where $m^n_x = (1/n)\sum_i\delta_{x_i}$; see \cite[Equation (2.16)]{ChaosMFLD}. Starting the particle system at $\rho^{\otimes n}$ yields 
    \[\Big|\int_{(\R^d)^n} \Phi\Big(\frac 1n \sum_{i=1}^n \delta_{y^i}\Big)\,d\pi^n_\infty(y) -\Phi(\rho) \Big|\leq \frac{C}{n^2}\]
    under the assumption that the first and second derivatives of $\Phi$ vanish at $\rho$.
\end{remark}
\begin{remark}
\label{rk: beyond iid}
In Theorems \ref{th: short time} and \ref{th: UiT}, the assumption that the initial condition of the particle system be independent could be relaxed in the following way. Let $\pi_0=\text{Law}(Y_0)$, denote by $\pi^k_0=\text{Law}(Y_0^1,\dots,Y_0^k)$ and assume the following:
    \begin{enumerate}
        \item[(1)]  the law $\pi_0$ is exchangeable and subgaussian;
        \item[(2)]  for every $k\leq n$, $H(\pi_0^k\,\|\,\mu_0^{\otimes k})\leq C k^2/n^2$ for some constant $C\geq 0$ that does not depend on $k$ or $n$.
    \end{enumerate}
    Assume moreover that $\pi_0$ satisfies the following weak propagation of chaos property: for every $\Phi \in \mathcal C_{\text{bd}}^4(\R^d)$ such that
    \begin{enumerate}
        \item[(3)] $\wgrad \Phi(\mu_0,\cdot)=0\,\,;\,\,\wgrad^2 \Phi(\mu_0,\cdot,\cdot)=0$;
        \item[(4)] There exists $C_{\delta}$ such that $|\delta_m^i \Phi(\nu,\boldsymbol{y})|\leq C_{\delta}(1+|\boldsymbol{y}|^p+\smallint |z|^{p}\,d\nu(z))$ for some $p\geq 1$ for $i=1,\dots,4$.
    \end{enumerate}
    Then, we have
    \begin{equation*}
        \Big|\int_{(\R^d)^n} \Phi\Big(\frac1n \sum_{i=1}^n\delta_{y_i}\Big)\,d\pi_0^n(y)-\Phi(\mu_0)\Big|\leq \frac{C}{n^2},
    \end{equation*}
    where $C$ depends on $C_\delta$ and $p$ only. 
    The proof of Proposition \ref{pr: estimating remainder} shows that the assumptions (3) and (4) are redundant when $\pi_0=\mu_0^{\otimes n}$ is a product measure. Remark \ref{rk: long time weak chaos} above provides another class of examples where (1--4) hold: measures of the form $\pi_0(dx) \propto \exp(-n\Psi(m^n_x))dx$ where $\Psi$ satisfies the assumptions of Corollary \ref{co: Mean Field Langevin Dynamics}.
\end{remark}

\appendix

\section{Proof of Lemma \ref{lm: T1 short time} } \label{ap:T1shorttime}

The moment estimates are standard, as $V$ is Lipschitz and the initial distribution $\mu_0$ has finite moments of all orders.
Let $W[t] \in \mathcal P(C([0,t];\R^d))$ denote the law of $(X_0+\sqrt{2}\sigma B_s)_{s \in [0,t]}$, where $B$ is a Brownian motion and $X_0 \sim\mu_0$ is independent of $B$. Then $\mu[t]$ is equivalent to $W[t]$.
Fix any $\nu[t] \ll \mu[t]$. Since $\nu[t] \ll W[t]$, by Girsanov's theorem, we have $\nu[t]=\Law(Z[t])$ for some process $Z$ satisfying
\[
dZ_s = \alpha(s,Z) dt + \sqrt{2} \sigma dB_s,
\]
with $Z_0 \sim \nu_0$ being independent of $B$, and where $\alpha$ is a progressively measurable function satisfying $\int_0^t |\alpha(s,Z)|^2\,ds < \infty$ a.s. Enlarge the probability space if necessary to support a random variable $\xi \sim \mu_0$ such that $(Z_0,\xi)$ is independent of $B$ and is optimally coupled in the sense that 
\[
\W_1(\mu_0,\nu_0) = \E[|Z_0-\xi|].
\]
Since $V$ is Lipschitz, we may construct a strong solution to the SDE
\[
dX_s = V(\mu_s,X_s)ds + \sqrt{2} \sigma dB_s, \quad X_0=\xi,
\]
and we have $X[t] \sim \mu[t]$. We then estimate the difference as
\begin{align*}
|X_s-Z_s| &\le |Z_0-\xi| + \int_0^t|V(\mu_s,X_s)-\alpha(s,Z)|\,ds \\
    &\le |Z_0-\xi| + L\int_0^t|X_s-Z_s|\,ds + \int_0^t|V(\mu_s,Z_s)-\alpha(s,Z)|\,ds ,
\end{align*}
where we used that $V$ is $L$-Lipschitz. Using Gronwall's inequality and taking expectations,
\[
\E\big[\sup_{s \in [0,t]}|X_s-Z_s|\big] \le e^{Lt}\W_1(\mu_0,\nu_0) + e^{Lt}\int_0^t\E[|V(\mu_s,Z_s)-\alpha(s,Z)|]\,ds
\]
Now, we have the well known entropy identity (see, e.g., \cite[Lemma 4.4]{HierarchiesPaper})
\[
\ent(\nu[t]\,\|\,\mu[t]) = \ent(\nu_0\,\|\,\mu_0) + \frac{1}{4\sigma^2}\int_0^t\E\big[|V(\mu_s,Z_s)-\alpha(s,Z)|^2\big]\,ds.
\]
Recall from Assumption \ref{assumption on Phi}(2) that $\W_1^2(\mu_0,\nu_0)\leq 2C_{\textup{T}_1}(0) \,\ent (\nu_0\,\|\,\mu_0)$. Hence,
\begin{align*}
\W_1(\nu[t],\mu[t]) &\le e^{Lt}\sqrt{2C_{\textup{T}_1}(0)\ent (\nu_0\,\|\,\mu_0)}  + e^{Lt}\int_0^t\E[|V(\mu_s,Z_s)-\alpha(s,Z)|]\,ds.
\end{align*}
Take squares and use Cauchy-Schwarz to get
\begin{align*}
\W_1^2(\nu[t],\mu[t]) &\le 4C_{\textup{T}_1}(0)e^{2Lt}\ent (\nu_0\,\|\,\mu_0)  + 2te^{2Lt}\int_0^t\E\big[|V(\mu_s,Z_s)-\alpha(s,Z)|^2\big]\,ds \\
    &\le e^{2Lt}\max(4C_{\textup{T}_1}(0),8\sigma^2 t)\ent(\nu[t]\,\|\,\mu[t]).
\end{align*}

\section{Convexity implies monotonicity}
\label{appendix: convexity}
\begin{lemma}
\label{lm: convexity implies monotonicity}
    Assume that $\Psi$ is in $\mathcal C_{bd}^6(\mathcal P_2(\R^d)\times \R^d)$ and assume it is $\lambda$-strongly displacement convex. Then for every random variable $X$ with law $\mu$ and every random variable $Y$ with law $\nu$,
    \begin{equation}
    \label{eq: monotonicity}
       \E[(Y-X)\cdot (\wgrad \Psi(\mu,X)-\wgrad \Psi(Y,\nu))]\geq \lambda \E[|Y-X|^2] 
    \end{equation}
\end{lemma}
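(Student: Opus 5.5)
The plan is to reduce the statement to a one-dimensional convexity argument along the straight-line (synchronous) interpolation of random variables. Fix $X\sim\mu$ and $Y\sim\nu$ on a common probability space. For $h\in[0,1]$ set $Z_h = X + h(Y-X)$ and $\phi(h) = \Psi(\Law(Z_h))$. Applying the convexity inequality \eqref{asmp:convexityUIT} to $Z_{h_1},Z_{h_2}$ shows that $\phi$ is $\lambda\E|Y-X|^2$-convex on $[0,1]$. Indeed $tZ_{h_1}+(1-t)Z_{h_2}=Z_{th_1+(1-t)h_2}$ and $\E|Z_{h_1}-Z_{h_2}|^2 = (h_1-h_2)^2\E|Y-X|^2$, so the inequality transfers verbatim. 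We never need this to be a $\W_2$-geodesic; the inequality only uses the coupling $(X,Y)$.

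Next, I will compute $\phi'$ via the chain rule for the Lions derivative. This is the same tool used in the proof of Lemma \ref{lem:monotonicity}, namely \cite[Proposition 5.85]{Carmonabook1}. Since $\Psi\in\mathcal C^6_{\rm bd}$, $\wgrad\Psi$ is bounded and Lipschitz, so
\[
\phi'(h)=\E\big[\wgrad\Psi(\Law(Z_h),Z_h)\cdot(Y-X)\big],
\]
and $\phi$ is $C^1$ on $[0,1]$ with one-sided derivatives at the endpoints. Linear growth of $\wgrad\Psi$ together with square integrability justifies dominated convergence. A $C^1$ function that is $\kappa$-convex with $\kappa=\lambda\E|Y-X|^2$ has $\phi'(h)-\kappa h$ nondecreasing. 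Hence $\phi'(1)-\phi'(0)\ge\kappa$, which reads
\[
\E\big[(Y-X)\cdot(\wgrad\Psi(\nu,Y)-\wgrad\Psi(\mu,X))\big]\ge\lambda\E|Y-X|^2.
\]
This is \eqref{eq: monotonicity}, with the evident typo $\wgrad\Psi(Y,\nu)=\wgrad\Psi(\nu,Y)$ and the sign arranged so that $V=-\wgrad\Psi$ satisfies Assumption \ref{Assumption: UiT}(2).

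To derive the derivative-monotonicity from strong convexity in one variable, I will use the standard argument. The function $\psi(h)=\phi(h)-\kappa h^2/2$ is convex, because the strong convexity inequality with the $\tfrac12\kappa t(1-t)(h_1-h_2)^2$ term is exactly convexity of $\psi$. A differentiable convex function has a nondecreasing derivative, so $\psi'(1)\ge\psi'(0)$.

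I expect the main obstacle to be the differentiability step. We must verify that $h\mapsto\Psi(\Law(Z_h))$ is differentiable with the displayed derivative even when $X,Y$ are merely square integrable and the probability space is arbitrary, and that the derivative is continuous in $h$. If \cite[Proposition 5.85]{Carmonabook1} requires an atomless space, I will first enlarge the space, which changes no laws. For continuity of $\phi'$ I will rely on the Lipschitz property of $\wgrad\Psi$ in both arguments, together with $\W_2(\Law(Z_h),\Law(Z_{h'}))\le|h-h'|\,\|Y-X\|_{L^2}$.
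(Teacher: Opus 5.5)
Your proposal is correct and is essentially the paper's argument. The paper writes the first-order inequality for the $\lambda$-convex Lions lift $\psi(X)=\Psi(\Law(X))$ on $L^2(\Omega)$ at both endpoints and adds the two. That is exactly your comparison $\phi'(1)-\phi'(0)\ge\lambda\E|Y-X|^2$ for the restriction of $\psi$ to the segment, and your sign correction matches the paper's conclusion $\E[(\wgrad\Psi(\Law(Z),Z)-\wgrad\Psi(\Law(X),X))\cdot(Z-X)]\ge\lambda\E|Z-X|^2$, i.e., monotonicity of $V=-\wgrad\Psi$.
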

\begin{proof}
    Let $(\Omega,\mathcal A,\P)$ be a probability space rich enough such that for every $\mu \in \mathcal P_2(\R^d)$, there exists a random variable on $\Omega$ with law $\mu$. Let $\psi: L^2(\Omega)\to \R$ be the Lions lift of $\Psi$:
    \begin{align*}
        \psi(X)=\Psi(\Law(X)).
    \end{align*}
    The differentiability assumed of $\Psi$ implies that $\psi$ is Frech\`et differentiable, and the Frech\`et derivative $D\psi(X)$ can be represented as $\wgrad \Psi( \Law(X),X)$; see the appendix of \cite{MasterEquation}, or \cite{GangboTudorascu}. Hence for every two random variables $X,Z$ in $L^2(\Omega)$,
    \begin{equation}
    \label{eq: gradient characterization}
        \psi(Z)-\psi(X)\geq \E[\wgrad \Psi( \Law(X),X) \cdot (Z-X)]+\frac \lambda 2 \E\big[|Z-X|^2\big].
    \end{equation}
    Swap the position of $Z,X$ in \eqref{eq: gradient characterization}, and add the two inequalities to obtain
    \begin{equation*}
        \E\big[(\wgrad \Psi(\Law(Z),Z)-\wgrad \Psi(\Law(X),X))\cdot (Z-X)\big] \geq \lambda \E\big[|Z-X|^2\big].
    \end{equation*}
    This is precisely the monotonicity condition of Assumption \ref{Assumption: UiT}(2) for $V=-\wgrad \Psi$.
\end{proof}

\section{Estimates for Mean Field Control}
\label{appendix: control}
In this section we prove Theorem \ref{th: chaos for control}; to do so, we need a preliminary estimate for an intermediate problem similar to \cite[Theorem 2.13]{MasterEquation}. This estimate is folklore, 
but we document it for completeness.
\begin{lemma}
\label{lm: Master equation estimate control}
Let $v:[0,T]\times \R^{dn} \to \R$ be a classical solution of the HJB equation 
\begin{align*}
    -\partial_t v_t(x)&- \Delta v_t(x)+\frac{1}{n}\sum_{i=1}^n H(x^i,n\nabla_{x^i} v_t(x))=F(m_x^n)\\
    v_T(x)&=G(m_x^n)\quad;\quad m_x^n=\frac1n\sum_{i=1}^n \delta_{x^i}
\end{align*}
and let $U_t$ be a classical solution of the Master Equation that satisfies the assumptions of Theorem \ref{th: chaos for control}:
\begin{align*}
    -\partial_t U_t(\nu)&-\int \nabla_x \cdot \wgrad U_t(\nu,x)-H(x,\wgrad U_t(\nu,x))\,d\nu(x)=F(\nu)\\
    U_T(\nu)&=G(\nu)
\end{align*}
    Consider the SDE systems
\begin{align*}
    dZ^i_t&=-\nabla_y H(Z^i_t, n\nabla_{x_i} v_t(Z_t))\,dt+\sqrt 2 dB_t^i\,\,;\,\, i=1,\dots,n;\\
    dY_t^i&=-\nabla_y H(Y_t^i,\wgrad U_t(m_t^n,Y_t^i))\,dt+\sqrt 2 \,dB_t^i\,\,;\,\,i=1,\dots,n.
\end{align*}
where $m_t^n=\frac 1n \sum_{i}\delta_{Y_t^i}$. Then,
\[\E[|Y_t^1-Z_t^1|^2]\leq \frac{C}{n^2}.\]
\end{lemma}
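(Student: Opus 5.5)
The plan is to follow the argument of \cite[Theorem 2.13]{MasterEquation}: project the master equation onto empirical measures to get an approximate solution of the $n$-particle HJB equation, and compare its gradient with that of $v$ along the trajectories. Define $u^n_t(x) := \frac1n U_t(m^n_x)$ for $x\in(\R^d)^n$. By the standard projection formulas for empirical measures,
\[
\nabla_{x^i}u^n_t(x)=\frac{1}{n^2}\wgrad U_t(m^n_x,x^i)\cdot n=\frac1n\wgrad U_t(m^n_x,x^i),
\qquad
\Delta_{x^i}u^n_t(x)=\frac{1}{n^2}\nabla_x\cdot\wgrad U_t(m^n_x,x^i)+\frac{1}{n^3}\tr\wgrad^2U_t(m^n_x,x^i,x^i).
\]
Consequently $n\nabla_{x^i}u^n_t=\wgrad U_t(m^n_x,x^i)$. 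Evaluating the master equation at $\nu=m^n_x$ and dividing by $n$, one sees that $u^n$ solves the $n$-HJB (after matching normalizations of $F,G$ by $1/n$) up to an error term
\[
r^n_t(x)=\frac{1}{n^3}\sum_i\tr\wgrad^2U_t(m^n_x,x^i,x^i),
\]
which satisfies $|r^n|\le C/n^2$, together with the identical terminal condition. The same bound holds for its gradient, $|\nabla_{x^j}r^n|\le C/n^3$, by the assumed boundedness of derivatives of $\wgrad U$ (Theorem \ref{th: chaos for control}(a)).

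Next, observe that $Y$ is exactly the process driven by the feedback $-\nabla_yH(\cdot,n\nabla_{x^i}u^n_t)$, while $Z$ is driven by the feedback built from $v$. Apply It\^o's formula to $w_t:=u^n_t(Y_t)-v_t(Y_t)$ along $Y$. Using both equations, the drift of $dw_t$ equals
\[
\frac1n\sum_i\big[H(Y^i,n\nabla_iu^n)-H(Y^i,n\nabla_iv)-\nabla_yH(Y^i,n\nabla_iu^n)\cdot(n\nabla_iu^n-n\nabla_iv)\big]+r^n,
\]
and the bracket is $\le 0$ by convexity of $H$ in $y$. Write $\Delta^i:=n\nabla_i(u^n-v)$. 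Uniform convexity of $H$ in $y$, which follows from assumptions on $L$ in the style of Lemma \ref{lm: smoothness of MasterEq}, would give the bracket $\le -c|\Delta^i|^2$. Squaring $w$ and using the martingale part with the terminal condition $w_T=0$ gives the standard BSDE estimate
\[
\E\int_0^T\frac1n\sum_i|\Delta^i_t|^2\,dt\le C\,\E\int_0^T n^2\cdot\frac1{n}\sum_i|\nabla_i w|^2\,dt\lesssim \frac{C}{n^2}
\]
after Gronwall, which is the analogue of \cite[Proposition 6.3]{MasterEquation}. By exchangeability this yields $\E\int_0^T|\Delta^1_t|^2dt\le C/n^2$.

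Finally, compare $Y$ and $Z$. Writing $\nabla_yH(Y^i,n\nabla_iv(Y))$, we get
\[
|Y^i_t-Z^i_t|\le\int_0^t\big(|\nabla_yH(Y^i,n\nabla_iu^n(Y))-\nabla_yH(Y^i,n\nabla_iv(Y))|+|\nabla_yH(Y^i,n\nabla_iv(Y))-\nabla_yH(Z^i,n\nabla_iv(Z))|\big).
\]
The first term is bounded by $C|\Delta^i|$. The second needs Lipschitz bounds on $x\mapsto n\nabla_iv(x)$ in the $\ell^2$ sense across all coordinates. I would instead split it as $n\nabla_iv(Y)-n\nabla_iu^n(Y)$ plus $n\nabla_iu^n(Y)-n\nabla_iu^n(Z)$ plus $n\nabla_iu^n(Z)-n\nabla_iv(Z)$. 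The middle piece is Lipschitz with constant $C(|Y^i-Z^i|+\W_2(m^n_Y,m^n_Z))$ thanks to Lipschitzness of $\wgrad U,\nabla_x\wgrad U$. The last piece requires controlling $\Delta$ along $Z$ too, which is done by repeating the It\^o argument along $Z$. Averaging over $i$, using $\W_2^2(m^n_Y,m^n_Z)\le\frac1n\sum|Y^i-Z^i|^2$, and applying Gronwall and exchangeability then yields $\E|Y^1_t-Z^1_t|^2\le C/n^2$.

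The main obstacle is the second step: obtaining the $1/n^2$ bound for $\E|\Delta^1|^2$. It relies on the error $r^n$ being $O(1/n^2)$ with a gradient of size $O(1/n^3)$, and on the $n$-scaling being tracked correctly through the BSDE estimate.
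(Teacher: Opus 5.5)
There is a genuine gap in your second step, which is the crux of the lemma.

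\textbf{The normalization and the size of the error.} With $u^n=\frac1n U_t(m^n_x)$ one gets $n\nabla_{x^i}u^n=\frac1n\wgrad U_t(m^n_x,x^i)$. So $Y$ is not the process driven by $u^n$; your gradient formula inserts a spurious factor $n$. The consistent choice, used in the paper, is $u_t(x)=U_t(m^n_x)$. It solves the $n$-HJB up to $r_t=\frac1{n^2}\sum_i\tr\wgrad^2U_t(m^n_x,x^i,x^i)$, which is only of order $1/n$.

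\textbf{Why the value-level estimate loses a factor $n$.} With this scaling, It\^o for $|w|^2$, $w=u-v$, along $Y$ gives
\[
\E|w_t(Y_t)|^2+2\,\E\int_t^T\sum_{j=1}^n|\nabla_{x^j}w_s(Y_s)|^2\,ds\le 2\,\E\int_t^T|w_s(Y_s)|\Big(\frac1n\sum_{i=1}^nD^i_s+|r_s(Y_s)|\Big)\,ds,
\]
where $D^i\ge0$ is your convexity bracket. Your displayed chain is an identity (both sides equal $\E\int n\sum_j|\nabla_{x^j}w|^2$), followed by the unproved claim that this is $O(n^{-2})$, i.e.\ $\E\int\sum_j|\nabla_{x^j}w|^2=O(n^{-3})$.

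Suppose the $D^i$-term could even be discarded. Then Young and Gronwall give only $\E\int\sum_j|\nabla_{x^j}w|^2\lesssim\int\E|r|^2\lesssim n^{-2}$. Exchangeability then yields $\E\int|\Delta^1|^2=n^2\E\int|\nabla_{x^1}w|^2\lesssim n^{-1}$, hence $\E|Y^1_t-Z^1_t|^2\lesssim 1/n$, not $1/n^2$.

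The loss is structural, not bookkeeping. In the game setting of \cite{MasterEquation}, player $i$'s own gradient enters the quadratic variation with weight one. Here $n\nabla_{x^i}w$ enters $\sum_j|\nabla_{x^j}w|^2$ with weight $n^{-2}$. Heuristically, if $w\approx n^{-1}W(m^n_x)$, then $\sum_j|\nabla_{x^j}w|^2\sim n^{-3}$, which is below the $n^{-2}$ precision at which the value-level identity balances.

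Two further points in this step are also unsupported. You never absorb the $D^i$-term, which is of size $n|w|\sum_j|\nabla_{x^j}w|^2$. And uniform convexity of $H$ is not a hypothesis: strong convexity of $L$, as in Lemma~\ref{lm: smoothness of MasterEq}, makes $\nabla_yH$ Lipschitz but does not make $H$ strongly convex.

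\textbf{The missing idea: differentiate the HJB once.} This is what the paper does. Set $\mathcal U^i=\nabla_{x^i}u$ and $\mathcal V^i=\nabla_{x^i}v$. They solve the differentiated equations with identical data, except that $\mathcal U^i$ carries a source $q^i=O(n^{-2})$ coming from $\nabla_{x^i}r$. This is where the gradient bound you recorded, but never used, actually enters.

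Run the energy estimate along $Z$, not $Y$. Along $Z$ the drift mismatch produces $\sum_j\nabla_{x^j}\mathcal U^i\,\big(\nabla_yH(Z^j,n\mathcal U^j)-\nabla_yH(Z^j,n\mathcal V^j)\big)$. This involves only derivatives of $\mathcal U^i$, which the smoothness of $U$ bounds by $C/n$ for $j=i$ and by $C/n^2$ for $j\neq i$. Along $Y$ you would instead face $\nabla_{x^j}\nabla_{x^i}v$, which is not controlled.

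Young's inequality, exchangeability of $Z$ and Gronwall then give $\E|\mathcal U^i_s(Z_s)-\mathcal V^i_s(Z_s)|^2\lesssim\int_s^T\E|q^i_t(Z_t)|^2\,dt\lesssim n^{-4}$. That is, $\E|n\nabla_{x^i}(u-v)(Z_s)|^2\lesssim n^{-2}$, and no convexity of $H$ is needed.

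\textbf{Your last step then simplifies.} Split $\nabla_yH(Y^i,n\nabla_{x^i}u(Y))-\nabla_yH(Z^i,n\nabla_{x^i}v(Z))$ at $\nabla_yH(Z^i,n\nabla_{x^i}u(Z))$. The first difference is Lipschitz in $|Y^i-Z^i|+\W_2(m^n_Y,m^n_Z)$. The second is exactly the comparison along $Z$. So no estimate along $Y$ is needed.
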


Once this lemma is established, the proof of Theorem \ref{th: chaos for control} is exactly the same as that of Theorem \ref{th: chaos for MFG}, just using Lemma \ref{lm: Master equation estimate control} in place of the estimates from \cite{MasterEquation}, so we do not repeat it.

\begin{proof}[Proof of Lemma \ref{appendix: control}]
Let $u_t$ be the finite dimensional projection of $U_t$, namely $u_t(x)=U_t(m_x^n)$; recalling that 
\begin{align*}
\nabla_{x_i} u_t(x)&=\frac{1}{n}\wgrad U_t(m_x^n,x^i)\\
\nabla^2_{x_i} u_t(x)&= \frac{1}{n^2}\wgrad^2 U_t(m_x^n,x^i,x^i)+\frac 1n \nabla_{x^i}\wgrad U_t(m_x^n,x^i)
\end{align*}
we get that $u_t$ solves
\begin{align*}
    -\partial_t u_t(x)&-\Delta u_t(x)+\frac{1}{n^2}\sum_{j=1}n\wgrad^2 U_t(m_x^n,x^j,x^j)-\frac{1}{n}\sum_{j=1}^n H(x^j,n \nabla_{x^j} v_t(x))=F(m_x^n)\\
    u_T(x)&=G(m_x^n)
\end{align*}
if we let $r_t= n^{-2}\sum_i \wgrad^2 U_t(m_x^n,x^i,x^i)$ we get that $|r_t|\leq C/n$ by boundedness of the derivatives Master equation given by our assumptions. Let $\mathcal U^i=\nabla_{x_i} u$ and $\mathcal V^i=\nabla_{x_i} v$; they solve the PDEs
\begin{align*}
    -\partial_t \mathcal V^i_t(x)&-\Delta \mathcal V_t^i(x)+\frac 1n \nabla_x H(x^i,n\mathcal V_t^i(x))+\sum_{j=1}^n \nabla_y H(x^j,n\mathcal V^j(x))\partial_j \mathcal V^i_t(x)=\frac 1n \wgrad F(m_x^n,x^i)\\
    \mathcal V_T^i(X)&=\frac1n \wgrad G(m_x^n,x^i)
\end{align*}
and 
\begin{align*}
    -\partial_t \mathcal U^i_t(x)&-\Delta \mathcal U_t^i+\frac 1n \nabla_x H(x^i,n\mathcal U_t^i)+\sum_{j} \nabla_y H(x^j,n\mathcal U^j_t(x))\partial_j \mathcal U^i_t(x)=\frac 1n \wgrad F(m_x^n,x^i)-q_t^i(x)\\
    \mathcal U_T^i(X)&=\frac1n \wgrad G(m_x^n,x^i)
\end{align*}
where 
\begin{align*}
    q_t^i(x)=\frac{1}{n^2} \nabla_x\wgrad^2 U_t(m_x^n,x^i,x^i)+\frac{1}{n^2} \nabla_y\wgrad^2 U_t(m_x^n,x^i,x^i)+\frac {1}{n^3}\sum_{j=1}^n\wgrad ^3 U_t(m_x^n,x^i,x^i,x^i)
\end{align*}
in particular, $|q_t^i(x)|_\infty \leq c/n^2$ given our assumptions. By Ito's formula,
\begin{align*}
    d\mathcal V_t^i(Z_t)&= -\frac 1n \Big(\wgrad F(L(Z_t),Z_t)-\nabla_x H(Z_t^i,n \mathcal V_t^i(Z_t))\Big)\,dt+\nabla \mathcal V_t^i(Z_t)\cdot dB_t
\end{align*}
and
\begin{align*}
    d\mathcal U_t^i(Z_t)&= -\frac 1n \Big(\wgrad F(L(Z_t),Z_t^i)-\nabla_x H(Z_t^i,n \mathcal U_t^i(Z_t))\Big)\,dt+q_t^i(Z_t)\,dt\\
    &+\sum_{j=1}^n \nabla_{x_j} \mathcal U_t^i(Z_t)\Big[\nabla_yH (Z_t^j,n\mathcal U_t^j(Z_t))-\nabla_y H(Z_t^j,n\mathcal V_t^j(Z_t^j))\Big]\,dt+\nabla \mathcal U_t^i(Z_t)\cdot dB_t
\end{align*}
as a result,
\begin{align*}
    0&=\E[|\mathcal V_T^i(Z_T)-\mathcal U_T^i(Z_T)|^2]=\E[|\mathcal V_s^i(Z_s)-\mathcal U_s^i(Z_s)|^2]+2\sum_{j=1}^n\int_s^T\E\big[|\nabla_{x_j} \mathcal U_t^i(Z_t)-\nabla_{x_j} \mathcal V_t^i(Z_t)|^2\big]\,dt\\
    &+\frac 1n\int_s^T\E\big[(\nabla_x H(Z_t^i,n\mathcal U_t^i(Z_t)-\nabla_x H(Z_t^i,n\mathcal V_t^i(Z_t)))\cdot (\mathcal U_t^i(Z_t)-\mathcal V_t^i(Z_t))\big]\,dt
    \\
    &+\int_s^T\E\big[q_t^i(Z_t)\cdot (\mathcal U_t^i(Z_t)-\mathcal V_t^i(Z_t))\big]\,dt\\
    &\sum_{j}\int_s^T \E\Big[(\mathcal U_t^i(Z_t)-\mathcal V_t^i(Z_t))^\top \nabla_{x_j} \mathcal U_t^i(Z_t)(\nabla_y H (Z_t^j,n\mathcal U_t^j(Z_t))-\nabla_y H(Z_t^j,n\mathcal V_t^j(Z_t^j)))\Big]\,dt
\end{align*}
Using Young's inequality, the lipschitz property of $H$ and its gradients, the fact that $|\partial_j \mathcal U_t^i| \leq C/n^2 $ for $j\neq i$, $|\partial_i \mathcal U_t^i| \leq C/n$ and exchangeability of $Z$ we get
\begin{align*}
    \E[|\mathcal V_s^i(Z_s)-\mathcal U_s^i(Z_s)|^2]&+2\sum_{j=1}^n\int_s^T\E\big[|\nabla_{x_j} \mathcal U_t^i(Z_t)-\nabla_{x_j} \mathcal V_t^i(Z_t)|^2\big]\,dt\\
    &\leq C \int_s^T \E[|\mathcal U_t^i(Z_t)-\mathcal V_t^i(Z_t)|^2]+\E[|q_t^i(Z_t)|^2]\,dt\\
\end{align*}
and thus Gronwall yields
\begin{align*}
    \E[|\nabla_{x_i} v_s(Z_s)-\nabla_{x_i} u_s(Z_s)|^2]=\E[|\mathcal V_s^i(Z_s)-\mathcal U_s^i(Z_s)|^2]\leq C \int_s^T \E[|q_t^i(Z_t)|^2]\,dt\leq \frac{C}{n^4}.
\end{align*}
and thus $\E[|n\nabla_{x^i} v_s(Z_s)-n\nabla_{x^i} u_s(Z_s)|^2]\leq C/n^2$.

Now synchronous coupling and exchangeability yield
\begin{align*}
    \E[|Y_T^i-Z_T^i|^2]&= -2\int_0^T \E[(Y_t^i-Z_t^i)\cdot (\nabla_y H(Y_t^i,n\nabla_{x_i} u_t(Y_t))-\nabla_y H(Z_t^i,n\nabla_{x_i} v_t(Z_t))]\,dt\\
    &\leq \int_0^T \E[|Y_t^i-Z_t^i|^2]+\E[|\nabla_y H(Y_t^i,n\nabla_{x_i} u_t(Y_t))-\nabla_y H(Z_t^i,n\nabla_{x_i} u_t(Z_t)|^2]\,dt\\
    &+\int_0^T\E[|\nabla_y H(Z_t^i,n\nabla_{x_i} u_t(Z_t))-\nabla_y H(Z_t^i,n\nabla_{x_i} v_t(Z_t)|^2]\,dt\\
    &\leq C\int_0^T \E[|Y_t^i-Z_t^i|^2]+L\E[|\wgrad U_t(m_t^n)-\wgrad U_t(m_{Z_t}^n)|^2]\,dt+ \frac{C}{n^2}\\
    &\leq C\int_0^T \E[|Y_t^i-Z_t^i|^2]\,dt+\frac{C}{n^2}.
\end{align*}
Gronwall yields the desired result.    
\end{proof}
\bibliographystyle{abbrv}
\bibliography{Biblio}

\end{document}